\documentclass{amsart}
\usepackage{amsmath}
\usepackage{amsfonts}
\usepackage{amssymb}
\usepackage{amsthm}
\usepackage{mathtools}
\usepackage{url}
\usepackage{tikz-cd}
\usepackage{arydshln}
\usepackage{enumitem}
\usepackage{dsfont}

\newcommand{\NN}{\mathbb{N}}
\newcommand{\ZZ}{\mathbb{Z}}

\newcommand{\U}{\mathcal{U}}
\newcommand{\A}{\mathcal{A}}

\newcommand{\HN}{\prescript{*}{}{\NN}}

\theoremstyle{theorem}
\newtheorem{definition}{Definition}[section]
\newtheorem{theorem}{Theorem}[section]
\newtheorem{corollary}{Corollary}[section]
\newtheorem{conjecture}{Conjecture}[section]
\newtheorem{lemma}{Lemma}[section]

\theoremstyle{definition}
\newtheorem{case}{Case}

\begin{document}
\title{The Unreasonable Rigidity of Ulam Sets}

\author{J. Hinman}
\email[Joshua Hinman]{joshua.hinman@yale.edu}
\author{B. Kuca}
\email[Borys Kuca]{borys.kuca@yale.edu}
\author{A. Schlesinger}
\email[Alexander Schlesinger]{alexander.schlesinger@yale.edu}
\author{A. Sheydvasser}
\email[Arseniy Sheydvasser]{arseniy.sheydvasser@yale.edu}

\address{Department of Mathematics, Yale University, 10 Hillhouse Avenue, New Haven, CT 06511}

\subjclass[2010]{Primary 11B83, 03C98; Secondary 05A16}
\keywords{Ulam sequence, additive number theory, model theory}

\maketitle

\begin{abstract}
We give a number of results about families of Ulam sets. Generalizing behavior of Ulam sets $U(1,n)$, we prove using an novel model theoretic approach that there is a rigidity phenomenon for Ulam sets $U(a,b)$ as $b$ increases. Based on this, we suggest a natural conjecture, and investigate its potential applications, including a method of proving certain families of Ulam sequences are regular, for which we also provide partial, unconditional, results. Along this same vein, we give an upper bound bound on the density of Ulam sequences $U(1,n)$. Finally, we give classification results for higher dimensional Ulam sets.
\end{abstract}

\section{Introduction and Main Results:}\label{Summary}

\subsection{Introduction}
In 1964, Ulam introduced his eponymous sequence
	\begin{align*}
    1, 2, 3, 4, 6, 8, 11, 13, 16, 18, 26, 28, 36, 38, 47, 48, 53, 57, 62, 69, 72, 77, 82, 87, 97 \ldots
    \end{align*}
    
\noindent defined recursively so that the first two terms are $1,2$ and each  subsequent term is the smallest integer that can be written as the sum of two distinct prior terms in a unique way \cite{ulam_1964}---we shall denote this sequence as $U(1,2)$, for reasons that will be evident later. Ulam was interested in determining the growth of this sequence---the best known bound is that the Ulam sequence grows no faster than the Fibonacci sequence. However, the Fibonacci grows exponentially, whereas experimental data suggests that the Ulam sequence has positive density about 0.079.
 
Recently, there has been renewed interest in the Ulam sequence and its generalizations due to a paper of Steinerberger \cite{steinerberger_2016} describing a discovered ``signal" in the Ulam sequence. Specifically, Steinerberger observed that for 
	\begin{align*}
    \lambda \approx 2.443442967\ldots,
    \end{align*}
    
\noindent the distribution of the sequence
	\begin{align*}
    U(1,2) \mod \lambda
    \end{align*}
    
\noindent seems to be concentrated in the middle third of the interval, but is not a discrete distribution. This is a new and unexpected phenomenon; it is an old theorem of Weyl \cite{weyl_1916} that for any integer sequence $\{a_i\}_{i = 1}^\infty$,
	\begin{align*}
    a_i \mod \lambda
    \end{align*}

\noindent is equidistributed for almost all $\lambda$. On the other hand, the Ulam sequence and its generalizations are the only known naturally-defined sequences for which the exceptions to Weyl's theorem give fixed, stable distributions that are not discrete.

	\begin{figure}
	\includegraphics[height = 0.25\textheight]{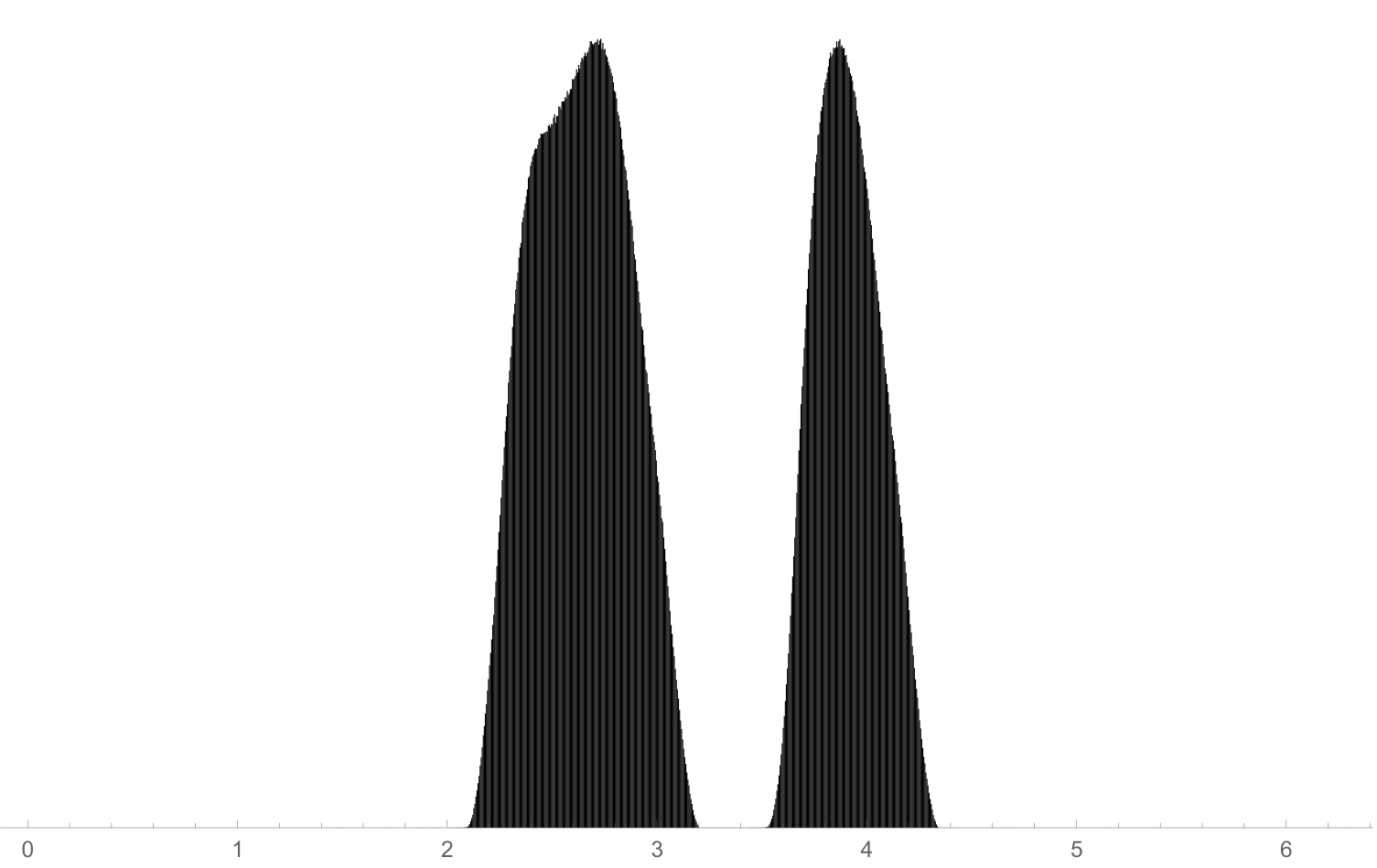}
	
	\caption{The distribution of $U(1,2) \mod \lambda$, plotted with $10^6$ terms.}
	\label{UlamHistogram1_2}
	\end{figure}
    
This same odd behavior can be observed for many similarly defined sequences with different starting conditions. However, the great difficulty in proving anything about Ulam sets is that we understand so little about their structure---they almost seem random, and have been described in the literature as ``erratic". Our present goal is to show that, in contrast, families of Ulam sets varying in some parameter can be startlingly rigid.

\subsection{Summary of Main Results:}
Let $U(a,b)$ denote the generalized Ulam sequences starting with integers $a,b$, such that each subsequent term is the smallest integer that can be written as the sum of two distinct preceding terms in exactly one way. An important class of examples is the family $U(1,n)$ where $n \geq \ZZ_{>1}$. The first few terms of $U(1,2)$, $U(1,3)$, $U(1,4)$, $U(1,5)$, and $U(1,6)$ are given below.
\begin{align*}
\arraycolsep=3pt
    \begin{array}{llllllllllllllllll}
    U(1,2) & = & 1, & 2, & 3, & 4, & 6, & 8, & 11, & 13, & 16, & 18, & 26, & 28, & 36, & 38, & 47, & 48 \ldots \\
    U(1,3) & = & 1, & 3, & 4, & 5, & 6, & 8, & 10, & 12, & 17, & 21, & 23, & 28, & 32, & 34, & 39, & 43 \ldots \\
    U(1,4) & = & 1, & 4, & 5, & 6, & 7, & 8, & 10, & 16, & 18, & 19, & 21, & 31, & 32, & 33, & 42, & 46 \ldots \\
    U(1,5) & = & 1, & 5, & 6, & 7, & 8, & 9, & 10, & 12, & 20, & 22, & 23, & 24, & 26, & 38, & 39, & 40 \ldots \\
    U(1,6) & = & 1, & 6, & 7, & 8, & 9, & 10, & 11, & 12, & 14, & 24, & 26, & 27, & 28, & 29, & 31, & 45 \ldots
    \end{array}
\end{align*}    
    
\noindent Startlingly, there appears to be a simple formula in $n$ for the first few terms of each of these sequences---specifically,
	\begin{align*}
    U(1,n) \cap [1,3n] = \{1\} \cup \{n, n + 1, \ldots, 2n\} \cup \{2n + 2\}.
    \end{align*}
    
\noindent We shall prove this as a lemma in Section \ref{Applications}. However, for $n \geq 4$, this pattern seems to extend further.
	\begin{align*}
    U(1,n) \cap [1,6n] &= \{1\} \cup \{n, n + 1, \ldots, 2n\} \\
    &\cup \{2n + 2\} \cup \{4n\} \\
    &\cup \{4n + 2, 4n + 3, \ldots 5n - 1\} \\
    &\cup \{5n + 1\}.
    \end{align*}
    
\noindent That this is true for all $n \geq 4$ is a consequence of Theorem \ref{Rigidity for a=1} in Section \ref{Special Rigidity Section}. This suggests a conjecture that seems too good to be true, but nevertheless was confirmed by the authors for thousands of terms of $U(1,n)$.

\begin{conjecture}\label{Conjecture That Started It All}
There exist integer coefficients $m_i, p_i, k_i, r_i$ such that for all integers $n \geq 4$,
	\begin{align*}
    U(1,n) = \bigsqcup_{i = 1}^\infty [m_i n + p_i, k_i n + r_i] \cap \ZZ.
    \end{align*}
\end{conjecture}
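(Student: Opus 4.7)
The plan is to establish the conjecture by induction on the block index, using as a base case Theorem \ref{Rigidity for a=1}, which already verifies the first few blocks uniformly for all $n \geq 4$. At stage $I$, assume integers $(m_i, p_i, k_i, r_i)_{i \leq I}$ have been produced so that
\[
U(1,n) \cap [1, k_I n + r_I] = \bigsqcup_{i=1}^{I} \bigl([m_i n + p_i, k_i n + r_i] \cap \ZZ\bigr)
\]
for all $n$ beyond some threshold. I would then determine the $(I{+}1)$-st block by locating the smallest candidate $m > k_I n + r_I$ with a unique representation as a sum of two distinct previous Ulam elements, and tracking how far this unique-representation property extends.

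The combinatorial heart of the inductive step is that pairwise sums $a + b$ with $a$ in block $i$ and $b$ in block $j$ fill a sum-block $[(m_i+m_j) n + (p_i+p_j),\, (k_i+k_j) n + (r_i+r_j)] \cap \ZZ$, equipped with a trapezoidal representation-count profile that is piecewise linear in the position within the sum-block. For candidates of the form $m = \alpha n + \beta$ with $\alpha$ fixed and $\beta$ bounded, the total representation count $\rho(m)$ summed over all pairs $(i,j)$ depends, for $n$ large, only on $\beta$, $\alpha$, and the fixed block data --- growth in $n$ shows up only in the high-multiplicity ``bulk'' of each sum-block, whereas the condition $\rho(m) = 1$ picks out boundary $\beta$-strata of bounded size. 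The set of $\beta$ with $\rho = 1$ is then a union of finite integer intervals independent of $n$, which lifts to a union of linear-in-$n$ intervals in $\ZZ$ and yields the next block.

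The hard part will be uniformity in $n$. Naively, each inductive step produces a new threshold $n_0(I)$ beyond which the analysis is valid, but the conjecture demands a single $n_0 = 4$ covering all $I$ simultaneously. Here I would invoke the model-theoretic rigidity apparatus developed earlier in the paper: by passing to a hyperinteger $N \in \HN$ and analyzing the internal Ulam set $U(1,N)$ via transfer, one can argue that the internal block coefficients are standard integers whose values do not depend on the choice of $N$, implying that the thresholds $n_0(I)$ are in fact bounded as $I \to \infty$. The residual cases $n = 4, 5, \ldots, n_0 - 1$ would then be dispatched by direct computation. The deepest obstacle is ruling out \emph{collisions} at particular small $n$, where two trapezoidal count profiles from distinct sum-blocks happen to align and create or annihilate a unique representation in a way that breaks the linear pattern --- excluding such arithmetic coincidences is precisely what the rigidity theorem is designed to do, and leveraging it at every inductive stage (rather than only at the base) is the main technical burden of a complete proof.
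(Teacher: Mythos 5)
The statement you are trying to prove is Conjecture \ref{Conjecture That Started It All}: the paper does not prove it, and explicitly says it is unknown how to prove a result this strong. What the paper actually establishes are two strictly weaker things: Theorem \ref{RigidityTheorem}, which for each fixed $C$ gives the linear decomposition of $U(a,N) \cap [1,CN]$ for $N \geq N_0(C)$ in a congruence class mod $L(C)$, and Theorem \ref{Rigidity for a=1}, which propagates an $(M,B,\varepsilon,\delta)$-rigidity hypothesis from one verified $N_0$ to all larger $N$ --- but only for the finitely many intervals $i \leq M$ covered by the hypothesis. So any ``proof'' of the conjecture must overcome an obstruction the paper itself flags: \emph{a priori there is nothing preventing $N_0, L \to \infty$ as $C \to \infty$.}

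That obstruction is exactly where your proposal breaks down, and you have in effect relocated the entire difficulty into the sentence beginning ``Here I would invoke the model-theoretic rigidity apparatus.'' The transfer argument in the paper does not do what you need: for a hypernatural $\omega$, the decomposition of $U(a,\omega) \cap [1,C\omega]$ is built by a recursive algorithm that halts only because $C$ is a fixed standard integer, and the resulting coefficients and the modulus $L$ depend on $C$; the statement that transfers back to $\NN$ is correspondingly a statement about the initial segment $[1,CN]$ with a $C$-dependent threshold. Nothing in the hyperreal construction forces the thresholds $n_0(I)$ to stabilize as $I \to \infty$, and asserting that ``the internal block coefficients are standard integers whose values do not depend on the choice of $N$'' is a restatement of the conjecture, not a consequence of transfer. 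Two further gaps: (i) the general rigidity theorem permits blocks $A(m\omega+p, k\omega+r, s^{(l)})$ carrying a nontrivial periodic binary pattern $l > 1$, whereas the conjecture requires genuine solid intervals; your trapezoidal-count argument would also have to rule out the emergence of such periodic blocks for $a = 1$, which you do not address; and (ii) your base case misreads Theorem \ref{Rigidity for a=1}, which is conditional on a rigidity hypothesis for $U(1,N_0)$ that must be verified by finite computation for each $M$ separately --- verifying it for all $M$ at a single $N_0$ is precisely Conjecture \ref{NumericalRigidity}, which is also open. The combinatorial core of your inductive step (new interval endpoints arise as sums of old endpoints plus a bounded constant, because the unique-representation condition confines candidates to the boundary strata of sum-blocks) is sound and is essentially Lemma \ref{RecursiveIntervals} of the paper; but it yields only the conditional propagation results of Section \ref{Special Rigidity Section}, not the unconditional conjecture.
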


While at present it is unknown how to prove a result as strong as Conjecture \ref{Conjecture That Started It All}, we construct an extension of the Ulam sequence over the hyperreals to prove a result that is in a sense the next best thing.

\begin{theorem}
There exist integer coefficients $m_i, p_i, k_i, r_i$ such that for any $C > 0$, there exists an integer $N_0$ such that for all integers $N \geq N_0$,
	\begin{align*}
    U(1,N) \cap [1,CN] = \left(\bigsqcup_{i = 1}^\infty [m_i n + p_i, k_i n + r_i] \cap \ZZ\right) \cap [1,CN].
    \end{align*}
\end{theorem}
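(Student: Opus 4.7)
The plan is to extend the Ulam construction to the nonstandard integers and recover the conclusion on standard $\ZZ$ by overspill. Fix an infinite hypernatural $N \in \HN \setminus \NN$. By the transfer principle, $U(1, N)$ is a well-defined internal subset of $\HN$ that satisfies the transferred Ulam recursion, and one may meaningfully ask for the structure of $U(1, N) \cap [1, CN]$ for a finite standard $C > 0$.

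The structural heart of the argument is the claim: for every standard $C > 0$, the set $U(1, N) \cap [1, CN]$ is a disjoint union of standardly-finitely many intervals of consecutive hyperintegers, each of the form $[mN + p, kN + r]$ with $m, p, k, r$ standard integers. I would prove this by induction along the Ulam recursion, observing that: the two seed elements $1 = 0 \cdot N + 1$ and $N = 1 \cdot N + 0$ already have such ``standard-affine'' shape; a sum of two standard-affine hyperintegers is again standard-affine; and the Ulam uniqueness criterion transfers cleanly to the shape level, because two distinct standard-affine shapes $mN + p$ and $m'N + p'$ can coincide as hyperintegers only when $(m, p) = (m', p')$ -- an infinite $N$ rigidly separates distinct shape classes. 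One then defines the promised integer coefficients $(m_i, p_i, k_i, r_i)$ by enumerating, in order, the maximal runs of consecutive standard-affine elements produced by the recursion; these coefficients are standard and intrinsic to the shape recursion itself, independent of the particular infinite $N$ chosen.

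Granting the structural claim, only finitely many intervals $I_i = [m_i N + p_i, k_i N + r_i]$ meet $[1, CN]$ for each fixed standard $C$, since each such $I_i$ has its leading coefficient $m_i \leq C$. The resulting identity $U(1, N) \cap [1, CN] = \bigsqcup_{I_i \subseteq [1, CN]} I_i$ is, once the finitely many coefficients are specified, a first-order statement in $N$ that holds for every infinite $N$; an application of overspill then yields a standard $N_0$ such that it holds for every standard $N \geq N_0$, delivering the theorem.

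The main obstacle I anticipate is the inductive verification of the structural claim, specifically ruling out ``exotic'' elements $x = mN + p \in U(1, N) \cap [1, CN]$ with $m$ bounded by $C$ but $p$ a genuinely nonstandard hyperinteger. Any such $x$ must arise as a unique sum $a + b$ of prior Ulam elements; controlling the shapes of $a$ and $b$ inductively, and arguing that the uniqueness requirement forces their shapes, and hence that of $x$, to be standard, is where the delicate model-theoretic work should concentrate. This should rest on the same rigidity phenomenology that drives Theorem \ref{Rigidity for a=1}, now lifted into the nonstandard setting so as to apply uniformly in $C$.
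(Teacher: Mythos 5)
Your framework---pass to an infinite hypernatural $N$, establish the decomposition internally, and return to the standard integers by transfer---is exactly the paper's strategy, and your observation that distinct standard-affine shapes $mN+p$ are rigidly separated by an infinite $N$ is correct. But the entire content of the theorem lives in your structural claim, and the induction you propose for it does not work as described. You suggest inducting ``along the Ulam recursion,'' element by element; however $U(1,N)\cap[1,CN]$ is a hyperfinite set with an \emph{infinite} (nonstandard) number of elements, so an element-wise induction never exhausts $[1,CN]$ in standardly finitely many steps. Nor can a shape-level argument alone explain why the infinitely many integers lying between two standard-affine landmarks $mN+p$ and $mN+p'$ (with $p'-p$ infinite) are either all in or all out of the set---which is precisely what makes the blocks full intervals. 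The paper instead inducts on \emph{blocks}: given a decomposition of $U(a,\omega)$ up to $c\omega+d$, it shows (i) the next Ulam element is standard-affine, (ii) on each band $c\omega+\NN$ the function counting representations is eventually periodic, being a finite sum of eventually periodic functions attached to pairs of already-constructed blocks, and (iii) the right endpoint of each periodic run is again standard-affine. Since only finitely many blocks fit below $C\omega$, the procedure terminates.

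The step you flag as ``where the delicate model-theoretic work should concentrate''---ruling out an element $x=mN+p$ with $p$ infinite---is indeed the crux, and you offer no mechanism for it. The paper's mechanism is a second-representation argument: if $x=a_1+a_2$ with, say, $a_2$ at infinite distance from the endpoints of its block, then $a_2-N'$ still lies in that block for a suitable positive multiple $N'$ of all periods found so far, so $x-N'=a_1+(a_2-N')$ is a representation of a non-Ulam number, which must therefore admit a second representation $a_1'+a_2'$; then $x=a_1'+a_2'+N'$, and unless $a_1',a_2'$ sit within standard distance of their block endpoints (forcing $x$ to be standard-affine) one manufactures a second representation of $x$ itself, contradicting uniqueness. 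Without this argument, together with the eventual periodicity of the representation-counting function, your structural claim is unsupported. A smaller issue: the theorem as stated wants one list of coefficients valid for all large $N$, whereas the general nonstandard argument a priori yields coefficients depending on $N$ modulo some $L$ (itself depending on $C$); for $a=1$ one must additionally argue that $L$ can be taken to be $1$, which your proposal does not address.
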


\noindent We give a more general version of this result for all Ulam sequences $U(a,b)$ in Section \ref{Rigidity Section}. We give further results of this type for the special case $U(1,n)$ in Section \ref{Special Rigidity Section}. This general methodology of studying families of Ulam sequences is logically continued in Section \ref{Applications}, where we give both unconditional results and improvements based on the conjectured rigidity of Ulam sequences.

\begin{theorem}
For integer pairs $(a,b)$ given below, the difference between consecutive terms of $U(a,b)$ are eventually periodic.
	\begin{align*}
    \begin{array}{lllllll}
    (4, 11) & (4, 19) & (6, 7) & (6, 11) & (7, 8) & (7, 10) & (7, 12) \\
    (7, 16) & (7, 18) & (7, 20) & (8, 9) & (8, 11) & (9, 10) & (9, 14) \\
    (9, 16) & (9, 20) & (10, 11) & (10, 13) & (10, 17) & (11, 12) & (11, 14) \\
    (11, 16) & (11, 18) & (11, 20) & (12, 13) & (12, 17) & (13, 14)
    \end{array}
    \end{align*}
\end{theorem}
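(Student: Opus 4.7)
The plan is to reduce each instance of the theorem to a finite, mechanical verification via a self-sustaining induction on the terms of $U(a,b)$. For each listed pair $(a,b)$, I would first compute $U(a,b) \cap [1, M]$ out to a threshold $M$ large enough that a stable difference pattern is visible. From the data one extracts a candidate period $p = p(a,b)$, a finite residue set $R \subset \ZZ/p\ZZ$, and an index $n_0$ such that on the computed segment an integer $m \geq u_{n_0}$ belongs to $U(a,b)$ if and only if $m \bmod p \in R$. This expresses $U(a,b)$ conjecturally as an explicit finite pre-periodic part together with a union of arithmetic progressions of common difference $p$, matching the eventual-periodicity-of-differences claim.

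The core step is the inductive extension: assuming $U(a,b) \cap [1, M']$ has the asserted form for some $M' \geq M$, show the description propagates to $U(a,b) \cap [1, M' + p]$. For a candidate $m$ in this window, one counts the representations $m = x + y$ with $x < y$ and $x, y \in U(a,b) \cap [1, M']$ by summing, over pairs of arithmetic progressions from the claimed description, the number of ordered pairs summing to $m$, and separately accounting for the finitely many representations that involve a pre-periodic term. Once $m$ exceeds a bound depending only on the pre-periodic part, this count is determined by $m \bmod p$, so a case check across the $p$ residue classes yields $m \in U(a,b)$ iff $m \bmod p \in R$, closing the induction.

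The main obstacle is the combinatorial bookkeeping rather than any single conceptual step. For each pair the values of $p(a,b)$, $R$, and the pre-periodic threshold all vary, and one must (i) identify them correctly from numerical data, (ii) verify representation-counting closure uniformly across every residue class modulo $p$, and (iii) rule out sporadic coincidences in which a pre-periodic term combines with a periodic term to contribute an unexpected second representation of some large $m$. The 28 pairs in the list are exactly those for which this verification terminates with a tractable period; pairs whose eventual period is large, or for which no period exists, lie outside the reach of the method and motivate Conjecture~\ref{Conjecture That Started It All}. In contrast to the rigidity theorems of Sections~\ref{Rigidity Section} and~\ref{Special Rigidity Section}, which treat an entire family $U(a,b)$ varying in $b$ uniformly, the present theorem is established pair by pair as a catalogue of case analyses, with the underlying structural philosophy drawn from the same self-sustaining phenomena that drive the rigidity results.
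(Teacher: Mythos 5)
Your overall strategy (a finite computation followed by a self-sustaining induction) has the right flavor, but the induction as you describe it does not close, and the missing ingredient is precisely the idea the paper's proof is built on. The problem is your claim that, for large $m$, the number of representations $m = x + y$ with $x,y \in U(a,b)$ ``is determined by $m \bmod p$.'' If the eventual description of $U(a,b)$ is a union of full arithmetic progressions of common difference $p$, then for two such progressions $A_1, A_2$ the number of pairs $(x,y) \in A_1 \times A_2$ with $x + y = m$ is not a periodic function of $m$: it is $0$ when $m$ misses the residue $a_1 + a_2 \bmod p$ and grows linearly in $m$ otherwise. So any residue class in $R$ that is hit by a sum of two residues in $R$ would accumulate unboundedly many representations, and your case check across residue classes cannot certify ``exactly one representation.'' The only way out is that the periodic part lies in congruence classes whose pairwise sums avoid $R$ entirely --- concretely, for every pair on the list all but finitely many terms are \emph{odd}, so sums of two periodic terms are even and contribute nothing to odd residues. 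That parity dichotomy, not periodicity per se, is what makes the representation count of a large odd $m$ a bounded, eventually periodic quantity: it equals the number of the finitely many even terms $e$ with $m - e \in U(a,b)$.

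This is exactly how the paper proceeds: it quotes Finch's theorem that an Ulam sequence with finitely many even terms is regular, and then proves a bootstrapping criterion (Theorem~\ref{Regular Theorem}) stating that if two windows of odd integers of length $l$ have identical indicator patterns and there are no even terms in $[2l, 3q-p]$, then there are no even terms at all beyond $2l$. The self-sustaining step (Lemma~\ref{Regular Lemma}) shows the translation $u \mapsto u + q - p$ is a bijection on the odd terms, because representations of odd numbers must use one of the small even terms; and a hypothetical smallest new even term $\tilde{u}$ is refuted by transporting its unique odd-plus-odd representation along this bijection to manufacture a second one. The practical payoff is substantial as well: the paper only needs to find two matching windows of length $l \leq 243$ and check a finite even-free range, whereas your method requires identifying the true eventual period (for these pairs $q - p$ can be on the order of $2 \times 10^6$) and verifying closure in every residue class. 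You would need to add both the reduction to finiteness of even terms and the parity-based control of representation counts before your induction can be made to work.
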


\begin{theorem}
The density of $U(1,n)$ is bounded above by $\frac{n + 1}{3n}$.
\end{theorem}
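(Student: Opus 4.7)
My plan is to extract a sliding-window constraint from the initial-segment structure of $U(1, n)$ proved in Section~\ref{Applications}, and convert it into the density bound by a direct counting argument over disjoint windows of length $3n$.

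Let $S = \{1\} \cup \{n, n+1, \ldots, 2n\} \cup \{2n + 2\}$; the lemma of Section~\ref{Applications} gives $S \subseteq U(1, n)$. For any $u \in U(1, n)$ with $u > 4n + 4$, each $k \in S$ satisfies $k < u/2$, and each such $k$ with $u - k \in U(1, n)$ yields a distinct valid representation $u = k + (u - k)$. The uniqueness of representation therefore forces
\[ |U(1, n) \cap (u - S)| \leq 1, \]
and in particular $|U(1, n) \cap [u - 2n, u - n]| \leq 1$.

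The main combinatorial step is to show that $|U(1, n) \cap [a, a + 3n - 1]| \leq n + 1$ for all sufficiently large $a$. Split $W = [a, a + 3n - 1]$ into $W = L \sqcup H$ with $L = U \cap [a, a + 2n - 1]$ and $H = U \cap [a + 2n, a + 3n - 1]$. For each $u \in H$, the shifted window $[u - 2n, u - n]$ is contained in $[a, a + 2n - 1]$, so $|L \cap [u - 2n, u - n]| \leq 1$. If moreover $u - 1 \in H$, then the strengthened constraint is saturated by $u - 1$ (which lies in $u - S$ since $1 \in S$), forcing $[u - 2n, u - n] \cap U = \emptyset$. Decomposing $H$ into $p$ maximal consecutive runs of sizes $t_1, \ldots, t_p$, only the leftmost element of each run has a window that might contribute to $L$; hence the union $C = \bigcup_{u \in H} [u - 2n, u - n]$ satisfies $|L \cap C| \leq p$. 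Since the diameter of $H$ is at most $n - 1$, consecutive runs are separated by gaps $g_\ell \in [2, n - 1]$ and their windows must overlap; a direct computation yields $|C| = |H| + n + \sum_\ell (g_\ell - 1) \geq |H| + n + p - 1$. Combining, $|L| \leq p + (2n - |C|)$, and thus
\[ |W \cap U| = |L| + |H| \leq p + (2n - |C|) + |H| \leq n + 1. \]

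Partitioning $[1, M]$ into consecutive windows of length $3n$ past a fixed threshold then yields $|U(1, n) \cap [1, M]| \leq (n+1)M/(3n) + O(1)$, which gives the density bound. The main obstacle in this plan is the combinatorial step: verifying the covering inequality $|C| \geq |H| + n + p - 1$ together with $|L \cap C| \leq p$ across all configurations of $H$, which relies both on the fact that $H$ has span at most $n - 1$ (forcing runs to have small gaps and their windows to overlap) and on the saturation property of the strengthened constraint within each run.
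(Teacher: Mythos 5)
Your core constraint is sound and is essentially the paper's Lemma \ref{sieve1} read in reverse: since $\{1\}\cup[n,2n]\cup\{2n+2\}\subseteq U(1,n)$, an Ulam number $u>4n+4$ can have at most one element of $\{u-1\}\cup[u-2n,u-n]\cup\{u-2n-2\}$ in $U(1,n)$, and your run-decomposition bookkeeping ($|L\cap C|\le p$, $|C|\ge |H|+n+p-1$, hence $|W\cap U|\le n+1$) checks out \emph{when $H\neq\emptyset$}. The gap is the case $H=U(1,n)\cap[a+2n,a+3n-1]=\emptyset$: then $p=0$ and $C=\emptyset$, the covering inequality $|C|\ge|H|+n+p-1$ fails (it would assert $0\ge n-1$), and your chain of estimates only yields $|W\cap U|=|L|\le 2n$. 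Since you partition $[1,M]$ into fixed disjoint windows, every window whose top third lands inside a gap of $U(1,n)$ of length at least $n$ is left unhandled, and such gaps do occur --- the paper's own description of $U(1,n)\cap[1,6n]$ already exhibits gaps of length roughly $n$ between consecutive blocks, and nothing in your argument rules them out arbitrarily far along the sequence. For those windows your bound gives density $2/3$ rather than $(n+1)/(3n)$, so the final counting step does not go through as stated.

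The paper avoids exactly this problem by anchoring differently: it takes $a$ to be the \emph{smallest Ulam number in the window} and analyzes $U(1,n)\cap[a,a+n-1]$, splitting into cases according to whether that set contains two consecutive elements, and then uses Lemmas \ref{sieve1} and \ref{sieve2} to excise a middle block of the window. Because that analysis is driven by whichever Ulam numbers are actually present immediately after the anchor, there is no analogue of your empty-$H$ case. To repair your version you would need a separate argument showing $|U(1,n)\cap[a,a+2n-1]|\le n+1$ whenever $U(1,n)\cap[a+2n,a+3n-1]=\emptyset$, and that missing piece is essentially the whole content of the paper's case analysis.
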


We also consider ``Ulam-like" behavior and rigidity in higher dimensions. Using the terminology of Kravitz and Steinerberger \cite{kravitz_steinerberger_2017}, we define Ulam sets as follows.

	\begin{definition}\label{UlamSetDefinition}
    Let $|\cdot|$ be a norm on $\ZZ^n$ that increases monotonically in each coordinate. A $(k,n)$-\emph{Ulam set} $U\left(v_1,v_2,\ldots v_k\right)$ is a recursively defined set that contains $v_1, v_2,\ldots v_n \in \ZZ_{\geq 0}^n$ and each subsequent vector is the vector of smallest norm that can be written as a sum of two distinct vectors in the set in exactly one way. We shall say $U\left(v_1,v_2,\ldots v_k\right)$ is \emph{non-degenerate} if $v_i \notin U\left(v_1, v_2, \ldots v_{i - 1}, v_{i + 1}, \ldots v_k\right)$ for every $1 \leq i \leq k$.
    \end{definition}
    
\noindent Two remarks are necessary here: first, it may appear that the definition of Ulam set depends on the choice of monotonically increasing norm $|\cdot|$. In fact, this is not so, as proved in \cite{kravitz_steinerberger_2017}. Secondly, it may be unclear which vector is added if there is more than one of equal norm. However, by the above, this is irrelevant.

Contingent on some natural restrictions described in Section \ref{HigherDimensions}, we classify all $(3,2)$-Ulam sets, showing that they necessarily belong to one of a finite number of different types, illustrated in Figure \ref{UlamTypes}.

\begin{figure}
\begin{tabular}{c}
\begin{tabular}{ccc}
\includegraphics[height = 0.1\textheight]{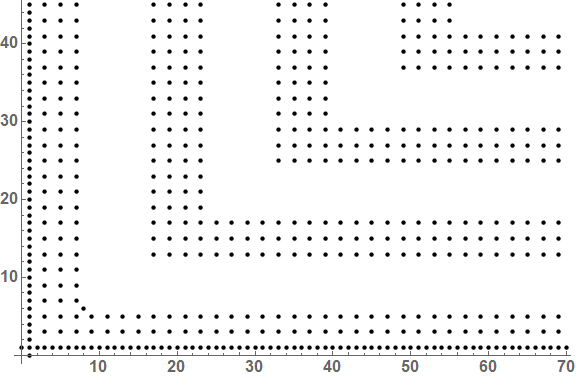} & \includegraphics[height = 0.1\textheight]{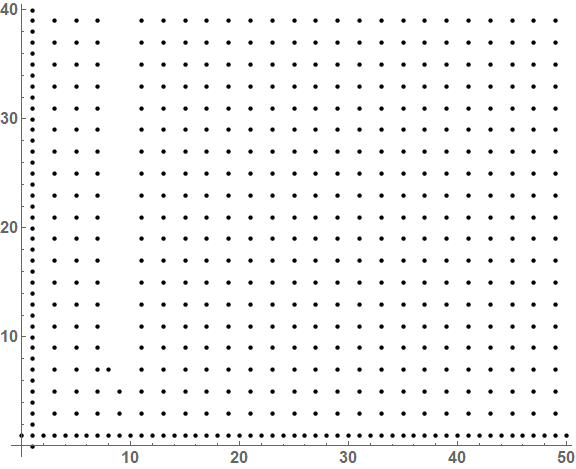} & \includegraphics[height = 0.1\textheight]{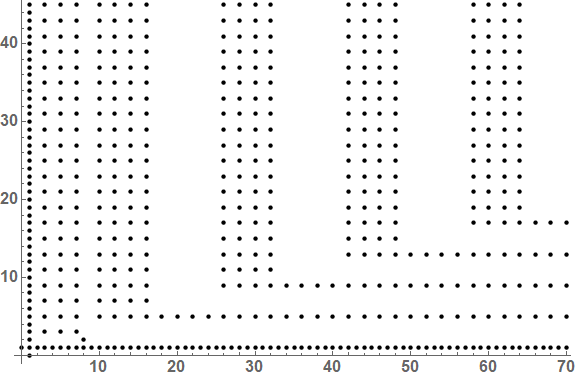}  \\
$U_\A(8,6)$ & $U_\A(8,7)$ & $U_\A(8,2)$
\end{tabular} \\ \\
\begin{tabular}{cc}
\includegraphics[height = 0.1\textheight]{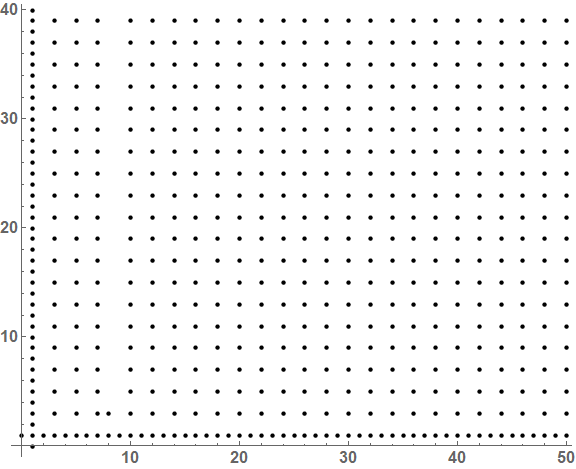} & \includegraphics[height = 0.1\textheight]{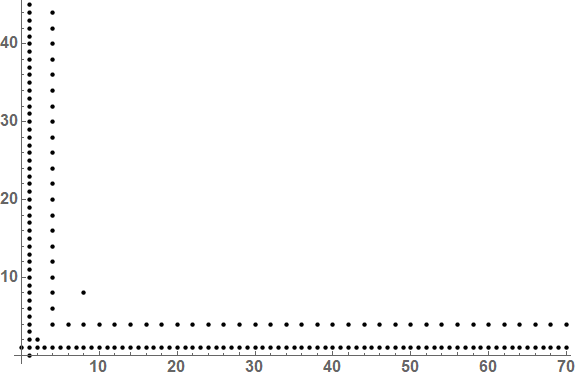} \\
$U_\A(8,3)$ & $U_\A(2,2)$
\end{tabular}
\end{tabular}
\caption{From left to right and top to bottom: sets $U_\A(v_1, v_2)$ of $L$, column-deleted, column-deleted $L$, shifted column-deleted, and exceptional type.}
\label{UlamTypes}
\end{figure}

\begin{theorem}
Let $\U = U\left((1,0), (0,1), (v_1, v_2)\right)$ be a non-degenerate $(3,2)$-Ulam set such that $v_1, v_2 \neq 0$. Then exactly one of the following is true of either $\U$ or its reflection about the $y = x$ line.
\begin{enumerate}
	\item $v_1, v_2 \in 2\ZZ \cap [4,\infty)$ and $\U$ is of $L$ type.
    \item $v_1 \in 2\ZZ$, $v_2 \in \left(1 + 2\ZZ\right) \cap [4,\infty)$, and $\U$ is of column-deleted type.
    \item $v_1 \in 2\ZZ \cap [4,\infty)$, $v_2 = 2$, and $\U$ is of column-deleted $L$ type.
    \item $v_1 \in 2\ZZ$, $v_2 = 3$, and $\U$ is of shifted column-deleted type.
    \item $v_1 = v_2 = 2$ and $\U$ is of exceptional type.
\end{enumerate}
\end{theorem}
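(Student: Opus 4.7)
The plan is to proceed by induction on the $\ell^1$-norm of vectors added to $\U$, organized around a case analysis on the parities of $v_1$ and $v_2$. I first exploit the symmetry that swapping the two standard-basis generators in the recursion produces the reflection of $\U$ about the line $y = x$, which justifies the phrase ``$\U$ or its reflection'' in the statement and lets me assume $v_1 \geq v_2$ throughout.

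Before any sum involving $(v_1, v_2)$ attains minimal norm, $\U$ agrees with the skeleton $U((1,0),(0,1))$. A direct induction pins down this skeleton precisely on small norms: an $L$-shape along the axes together with a constrained set of interior entries whose positions are determined by parity considerations. Non-degeneracy forces $v_1, v_2 \geq 2$ and excludes $(v_1, v_2)$ from the skeleton, which in particular constrains which parity patterns of $(v_1, v_2)$ can arise.

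The crux is to analyze the first moment when sums of the form $(v_1, v_2) + w$ (for $w$ a skeleton element) enter the range of minimal norm, and whether such a sum collides with a sum that is already internal to the skeleton. Because the skeleton is itself governed by parity of coordinate sums, collisions occur systematically: when $v_2$ is odd, entire columns of candidate vectors are deleted, producing the column-deleted or shifted column-deleted types; when $v_2$ is even no such cancellation happens, and one obtains the $L$ type. The small values $v_2 = 2$ and $v_2 = 3$ introduce additional corner collapses, explaining the column-deleted $L$ and shifted column-deleted variants. The case $v_1 = v_2 = 2$ is exceptional because $(2,2)$ sits so close to the skeleton that its shifted sums collide extensively with skeleton sums.

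In each case I would then prove the asserted shape of $\U$ by a secondary induction on the row index $y$, showing that row $y+2$ is a translate of row $y$ after a short transient. The main obstacle is the bookkeeping at the inductive step: for every candidate vector one must verify either a unique unordered representation as a sum of two distinct prior elements (if it should enter $\U$) or at least two such representations (if it should be excluded), and one must rule out both types of failure simultaneously across all rows. I would try to uncover a shift-periodic recursion so that a single generic inductive step, together with finitely many small-norm base cases and a direct computation for the exceptional type $v_1 = v_2 = 2$, suffices to handle all five cases uniformly.
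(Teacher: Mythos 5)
There is a genuine gap, and also a reversed mechanism. First, the gap: the step you yourself flag as ``the main obstacle'' --- verifying for every candidate vector either a unique representation or at least two --- is where all the content lives, and your proposal leaves it open. The paper's proof dissolves this obstacle with one structural observation (its Lemma 5.1): since all representations of points in $W_{(v_1,v_2)}$ use only skeleton elements, one gets $\U \cap W_{(v_1,v_2)} = \A \cap W_{(v_1,v_2)}$, so the axis rays $\{(m,1)\}$ and $\{(1,m)\}$ persist in $\U$, and therefore \emph{every} point $(m,n)$ automatically has the ``standard representation'' $(m-1,1)+(1,n-1)$. Consequently $(m,n)\notin\U$ is \emph{equivalent} to exhibiting a single nonstandard representation, and $(m,n)\in\U$ to ruling all nonstandard ones out; one never has to count representations. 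Without this reduction your row-by-row bookkeeping does not close, and ``I would try to uncover a shift-periodic recursion'' is not a proof. The paper's actual induction is also organized differently --- on nested windows $W_{(2kv_1,2kv_2)}$ rather than on rows --- because in the $L$-type case the surviving points recur in translated copies of the initial window by $(2kv_1,2kv_2)$, not with period $2$ in $y$.

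Second, your parity heuristic is backwards. You claim that odd $v_2$ causes wholesale column deletion while even $v_2$ causes ``no such cancellation,'' yielding the $L$ type. In fact it is the opposite: when $v_1,v_2$ are both even, the sums $(v_1,v_2)+(\text{odd},\text{odd})$ land exactly on the odd--odd skeleton points, so almost the entire skeleton beyond $W_{(v_1,v_2)}$ is annihilated --- that is why the $L$ type is so sparse (the paper's explicit nonstandard representation $(2m+1,2n+1)=(v_1,v_2)+(2m+1-v_1,2n+1-v_2)$). When $v_2$ is odd, those sums land on points of mixed parity, which are generically absent from the skeleton, so the only systematic collision comes from the axis ray $(1,\text{even})$, deleting the single column $x=v_1+1$; this is the column-deleted type, the case with the \emph{least} cancellation. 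Getting this orientation right is essential before the case analysis for $v_2\in\{2,3\}$ and $(2,2)$ can be carried out.
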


See Section \ref{HigherDimensions} for definitions of the various types of Ulam sets. Finally, in Section \ref{Parity Section} we show that there is a parity restriction on more general $(k,2)$-Ulam sets.

\begin{theorem}
Let $\U = U\left((1,0),(0,1),v_1, v_2, \ldots v_n\right)$ be a non-degenerate $(n + 2, 2)$-Ulam set such that none of the $v_i$ lie on the coordinate axes. Then there exists a $(w_1, w_2) \in \ZZ_{\geq 0}^2$ such that for all $(m,n) \in \U$, if $m \geq w_1, n \geq w_2$, then $m = w_1 \mod 2$, $n = w_2 \mod 2$.
\end{theorem}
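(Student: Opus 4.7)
The plan is to exploit uniqueness of the Ulam decomposition together with a strong induction on $m+n$ to force deep-interior elements into a single parity class. First I would verify that the only axis elements of $\U$ are $(1,0)$ and $(0,1)$: since no $v_i$ lies on an axis, any putative new $x$-axis element $(k,0)$ with $k\geq 2$ would have to arise as a sum of two distinct $x$-axis elements of $\U$, but $(1,0)$ is the only one available; symmetrically for the $y$-axis.

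Fix a large threshold $T$, let $S=\{(a,b)\in\U:a+b\leq T\}$ denote the finite ``boundary,'' and let $I_T=\{(m,n)\in\U:\min(m,n)>T\}$ denote the ``deep interior.'' For $(m,n)\in I_T$, the unique decomposition $(m,n)=(a,b)+(c,d)$ has at least one summand with coordinate sum exceeding $T$, and for $\min(m,n)$ large enough the other summand also lies in $I_{T'}$ for some $T'<T$. I would then induct strongly on $m+n$: assuming a common parity $(w_1,w_2)$ for every element of $I_T$ of strictly smaller coordinate sum, the decomposition of $(m,n)$ falls into either the case where both summands are in the interior (forcing $(m,n)\equiv(0,0)\pmod{2}$) or the case where exactly one summand $(a,b)\in S$ contributes a parity shift (forcing $(m,n)\equiv(a,b)+(w_1,w_2)\pmod{2}$).

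The crux is showing that only a single parity class persists. Suppose toward contradiction that two parities $P_1\neq P_2$ each persist in $I_T$ for every $T$. By pigeonhole over the finite set $S$, there exist distinct small elements $s,s'\in S$ each appearing in the decomposition of infinitely many deep-interior elements, producing parities $P_1$ and $P_2$ respectively. If one can locate a single $x=(m,n)\in\U$ deep in the interior such that both $x-s\in\U$ and $x-s'\in\U$, then $x=s+(x-s)=s'+(x-s')$ yields two distinct Ulam decompositions, contradicting uniqueness. The main obstacle is precisely this simultaneity: arranging both ``complementary'' elements to lie in $\U$ at the \emph{same} deep point, rather than at independent points contributing to each parity class separately. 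Overcoming it should involve a more refined bookkeeping of how the inductive hypothesis constrains which shifts $s$ can actually be realized as Ulam decompositions at $(m,n)$. Once the contradiction is established, only one parity persists in the deep interior, and $w_1,w_2$ are chosen as any non-negative integers of that parity exceeding $T$.
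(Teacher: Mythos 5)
Your inductive framework misidentifies what the unique decomposition of a deep-interior element actually looks like, and this breaks the argument before the crux you flag is even reached. Every point $(m,n)$ with $m,n>1$ admits the standard representation $(m,n)=(m-1,1)+(1,n-1)$, because the full lines $\{(k,1)\}$ and $\{(1,k)\}$ lie in $\U$ (a consequence of non-degeneracy and the hypothesis that no $v_i$ is on an axis). Hence a point of your deep interior $I_T$ belongs to $\U$ precisely when it has \emph{no} nonstandard representation, and its unique decomposition is the standard one: both summands sit on the lines $y=1$ or $x=1$, so for $m,n$ large neither summand is in your finite boundary $S$ nor in any deep interior $I_{T'}$. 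Your two inductive cases --- ``both summands interior'' and ``exactly one summand in $S$'' --- therefore never occur, and the induction on $m+n$ transfers no parity information at all. Separately, you candidly admit that the contradiction step (locating a single deep $x$ with both $x-s$ and $x-s'$ in $\U$) is left as ``refined bookkeeping''; that simultaneity is exactly the hard point of your route and is not resolved.

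The paper proceeds quite differently, and the comparison is instructive. It first shows that some column $x=M$ with $M>1$ contains infinitely many elements of $\U$: if every such column were finite, a point $(m,n)$ with $m>2$ and $n$ larger than twice the column-height bound could only carry the standard representation, forcing it into $\U$ and contradicting the bound. High up that column membership alternates, yielding a point $(u_1,u_2)$ with $(u_1,u_2+2k)\in\U$ for all $k\geq 0$; any $(u_1',u_2')\in\U$ to the upper right with $u_2'\not\equiv u_2 \bmod 2$ then acquires the nonstandard representation $(u_1,u_2'-1)+(u_1'-u_1,1)$, which fixes the $y$-parity, and reflecting about $y=x$ fixes the $x$-parity. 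If you want to salvage your approach, the productive move is to abandon propagating parity through decompositions and instead exhibit an infinite one-parameter family inside $\U$ that can be used to manufacture nonstandard representations of wrong-parity points --- which is precisely what the paper's infinite column provides.
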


\subsection*{Acknowledgements} The authors would like to thank Stefan Steinerberger, both for suggesting this problem and for giving valuable feedback along the way. They are indebted to Yuxuan Ke for coding help. Finally, they thank the organizers of SUMRY 2017, where this paper took shape.

\section{Rigidity in $(2,1)$-Ulam Sets:}\label{Rigidity Section}
Our goal in this section is to show that if we fix $a \in \ZZ_{> 0}$ and vary $b$ in some congruence class, then the sets $U(a,b)$ are rigid in a strong sense: there exists a ``nice" function
	\begin{align*}
    \text{Ulam}: \NN \rightarrow P(\NN)
    \end{align*}
    
\noindent such that for all $C > 0$, there exist positive integers $N_0, L$ such that for all $N \geq N_0$ with $N \equiv N_0 \mod L$,
	\begin{align*}
    \text{Ulam}(N) \cap [1,CN] = U(a,N) \cap [1,CN].
    \end{align*}
    
\noindent Before we give a more precise statement and a proof, we first give a slightly different characterization of $(1,2)$-Ulam sets than the one we have been using up until now---in particular, we are going to show that $(1,2)$-Ulam sets are first order axiomatizable.

\begin{lemma}\label{FirstOrderDescription}
Let $P(n)$ be a predicate on $\NN$ satisfying the following properties.
	
	\begin{enumerate}
		\item $\forall n$, if $n \leq b$ and $P_{a,b}(n)$, then $n = a$ or $n = b$.
		\item $\forall n$, if $P_{a,b}(n)$ and $n > b$ then $\exists ! u<v$ such that $P_{a,b}(u)$, $P_{a,b}(v)$, and $n = u + v$.
		\item $\forall n$, if $P_{a,b}(n)$ and $\exists N > n$ such that $\forall r \in (n,N)$, $\neg P_{a,b}(r)$ and there exists a unique pair $u<v$ such that $N = u + v$ with $P_{a,b}(u) and P_{a,b}(v)$, then $P_{a,b}(N)$.
	\end{enumerate}
		
\noindent Then $P_{a,b}(n)$ if and only if $n \in U(a,b)$.
\end{lemma}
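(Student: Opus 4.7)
The plan is to prove both inclusions simultaneously by strong induction on the enumeration of $U(a,b) = \{u_1 < u_2 < \ldots\}$ with $u_1 = a$ and $u_2 = b$; specifically, I would show that for every $k \geq 2$,
\[
    P_{a,b} \cap [1, u_k] \;=\; \{u_1, u_2, \ldots, u_k\}.
\]
Since $u_{k-1} + u_k$ always supplies a valid next Ulam term (it is uniquely representable among $\{u_1,\ldots,u_k\}$), the sequence is unbounded, so this equality for all $k$ forces $P_{a,b} = U(a,b)$. The base case $k = 2$ is immediate from property (1), read as the biconditional clearly intended (else the always-false predicate would satisfy (1)--(3) and invalidate the lemma).

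For the inductive step, I would verify two sub-claims in order. First, (a): $P_{a,b} \cap (u_k, u_{k+1}) = \emptyset$. Let $m$ be a hypothetical minimal counterexample; property (2) supplies a unique representation $m = u + v$ with $u < v$ and $u, v \in P_{a,b}$, and minimality of $m$ forces $v \leq u_k$, hence $u, v \in \{u_1, \ldots, u_k\}$ by the inductive hypothesis. Since $P_{a,b} \cap [1, m) = P_{a,b} \cap [1, u_k] = U(a,b) \cap [1, u_k]$, the unique $P_{a,b}$-representation of $m$ coincides with its unique representation as a sum of two distinct prior Ulam elements---making $m$ a valid successor to $u_k$ in the Ulam recursion, contradicting $m < u_{k+1}$. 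Second, (b): $P_{a,b}(u_{k+1})$, proved by applying property (3) with $n = u_k$ and $N = u_{k+1}$; the three hypotheses of (3) hold by the inductive hypothesis, by sub-claim (a), and because any $P_{a,b}$-summand of $u_{k+1}$ must lie in $P_{a,b} \cap [1, u_k] = U(a,b) \cap [1, u_k]$, where unique representability of $u_{k+1}$ is exactly the defining property of the Ulam recursion. Combining (a), (b), and the inductive hypothesis closes the induction.

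The main obstacle is shuttling between the two notions of ``unique representation''---one inside $P_{a,b}$ (used by (2) and (3)) and one inside $U(a,b)$ (used to define $u_{k+1}$). Their coincidence below $u_{k+1}$ is precisely what the inductive hypothesis together with sub-claim (a) buys, which is why sub-claim (a) must be handled before sub-claim (b). A minor bookkeeping point is the tacit membership $a, b \in P_{a,b}$, which must be read into property (1) to avoid trivialization by the empty predicate.
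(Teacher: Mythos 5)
Your proof is correct and takes essentially the same route as the paper's: a strong induction that uses the inductive hypothesis to identify $P_{a,b}$-representations with Ulam representations below the current point, then invokes minimality and axiom (3) to match the two sets term by term. The only difference is organizational---you induct on the index $k$ of the enumeration and separate the ``empty gap'' and ``next term'' sub-claims, while the paper inducts on the integer $n$ directly---and your reading of axiom (1) as implicitly asserting $P_{a,b}(a)$ and $P_{a,b}(b)$ matches the paper's tacit assumption in its base case.
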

	
\begin{proof}
We prove this by induction. The base case where $n \leq b$ is evident, so we assume that $\forall n < k$, $P_{a,b}(k)$ if and only if $k \in U(a,b)$, and we try to prove it for $n$.
	
Let $k'$ be the largest element of $U(a,b)$ smaller than $n$. If $P_{a,b}(n)$, then $n = u + v$ for some unique $u<v$ such that $P_{a,b}(u), P_{a,b}(v)$---by the inductive hypothesis, we know that this is the same as saying that there are unique $u<v \in U(a,b)$ such that $n = u + v$. Therefore, $n$ is the smallest integer greater than $k'$ that has a unique representation as the sum of two distinct Ulam numbers; ergo, $n \in U(a,b)$.
	
Conversely, if $n \in U(a,b)$, then $n = u + v$ for some unique $u < v$ such that $P_{a,b}(u)$ and $P_{a,b}(v)$. Since $P_{a,b}(k')$ and $\forall r \in (k,n)$, $\neg P_{a,b}(r)$, we conclude that $P_{a,b}(n)$.
\end{proof}
	
In light of Lemma \ref{FirstOrderDescription}, it makes perfect sense to generalize $(1, 2)$-Ulam sets from subsets of the naturals to subsets of ordered abelian groups.

\begin{definition}
Let $A$ be an ordered abelian group, and $0 < a < b \in A$. A subset $U$ of $A$ is an \emph{Ulam subset
with respect to} $a$, $b$ if there is some predicate $P_{a,b}$ on A satisfying the first order axioms given in Lemma \ref{FirstOrderDescription} such that $u \in U$ if and only if $P_{a,b}(u)$ is true.
\end{definition}
	
The authors believe that this description might be of independent interest in the study of how Ulam sets can be generalized to more general abelian groups. However, we are primarily interested in the case that $A = \HN$, the hypernaturals, where Ulam subsets are quite structured---a fact that we shall exploit. We start with the following example.

\begin{lemma}\label{Initial Segment}
Let $a \in \NN$, and let $\omega$ be any hypernatural larger than every standard integer that is coprime to $a$. If $U$ is the Ulam subset of $\HN$ with respect to $a,\omega$, then
	\begin{align*}
	U \cap [a,2\omega + a] = \{a\} \cup \left([\omega,2\omega + a - 1] \cap \left(\omega + a\HN\right)\right) \cup \{2\omega + a\}.
	\end{align*}
\end{lemma}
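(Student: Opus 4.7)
The plan is to determine $U$ element by element along the interval $[a, 2\omega + a]$, using the first-order characterization from Lemma~\ref{FirstOrderDescription}. Axiom 1 immediately fixes $U \cap [a, \omega] = \{a, \omega\}$, leaving the open-ended interval $(\omega, 2\omega + a]$ to analyze.

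The heart of the argument is a simultaneous induction on hypernaturals $n \in (\omega, 2\omega + a - 1]$, establishing that $n \in U$ if and only if $n \in \omega + a\HN$. For $n = \omega + ak$, the representation $n = a + (\omega + a(k-1))$ is always available by the inductive hypothesis, and the only other conceivable representation, $(\omega + ai) + (\omega + aj)$ with $i < j$, equates to $n$ precisely when $\omega = a(k - i - j)$. The hypothesis on $\omega$---that it exceeds every standard integer coprime to $a$---is essentially used here to rule out $a \mid \omega$, so this competing representation never appears, uniqueness holds, and Axiom 3 adds $n$ to $U$. For a ``gap'' hypernatural $n$ strictly between consecutive progression terms, the same analysis shows instead that $n$ admits no representation at all: writing $n = a + (n - a)$ would require $n - a \in U$, but $n - a$ also lies strictly between two consecutive progression terms, and any decomposition with both summands $\geq \omega$ would sum to at least $2\omega$, exceeding $n$ whenever $n < 2\omega$.

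The short tail between the last progression term $\omega + ak^*$ (where $k^*$ is maximal with $\omega + ak^* \leq 2\omega + a - 1$) and the endpoint $2\omega + a$ is handled by a direct replay of the earlier arguments: each hypernatural strictly in the gap has only candidate representations involving a summand of the form $\omega + (\text{non-multiple of } a)$, which fails to lie in $U$. Finally, $2\omega + a$ itself admits the representation $\omega + (\omega + a)$; the only potentially competing decomposition is $a + 2\omega$, which is forbidden because $2\omega \notin U$: from $\omega \not\equiv 0 \pmod a$ one obtains $2\omega \notin \omega + a\HN$, so $2\omega$ is a gap hypernatural and hence not in the Ulam subset. Axiom 3 then delivers $2\omega + a \in U$.

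The main obstacle is the bookkeeping: presence and absence of hypernaturals must be tracked simultaneously along the progression, invoking Axioms 2 and 3 at the right moment, and every case hinges on a delicate count of representations depending on the arithmetic of $\omega \bmod a$. Were $\omega$ a multiple of $a$, the entire structure would collapse, as then $2\omega$ would itself be a progression term and $a + 2\omega$ would give a second representation of $2\omega + a$, eliminating the distinguished singleton at the top of the interval. This is the essential place where the hypothesis on $\omega$ is used, and the hardest part of writing the proof cleanly will be phrasing the representation-counting step in a way that simultaneously handles progression elements, interior gap elements, and the terminal gap $(\omega + ak^*, 2\omega + a)$.
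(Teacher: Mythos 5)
Your proposal is correct and follows essentially the same route as the paper: establish the initial segment up to $\omega$, observe that for $\omega < n < 2\omega + a$ the only possible decomposition is $a + (n - a)$ (so membership propagates along the progression $\omega + a\HN$), and treat $2\omega + a = \omega + (\omega + a)$ separately using $a \nmid \omega$ to exclude the competing decomposition $a + 2\omega$ --- indeed you are more explicit than the paper about this last uniqueness check. The only things to tighten are the constant in your gap argument (two distinct elements of $U$ that are $\geq \omega$ sum to at least $2\omega + a$, not merely $2\omega$, which is what covers gap elements in $[2\omega, 2\omega + a - 1]$) and a word justifying induction along a hyperfinite interval, which the paper handles with its remark that $U \cap [1, 2\omega + a - 1]$ is hyperfinite.
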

	
\begin{proof}
It is obvious that up to $\omega$, $a,\omega$ are the only elements of $U$. It is similarly clear that $\omega + 1, \omega + 2, \ldots \omega + a - 1$ cannot be written as sums of prior terms. On the other hand, $\omega + a$ has a unique decomposition, hence $\omega + a \in U$. Note that for all $\omega < \omega' < 2\omega + a$, any decomposition in terms of prior terms must be of the form $a + (\omega' - a)$---therefore, $\omega' \in U$ if and only if $\omega' - a \in U$. Noting that $U \cap [1,2\omega + a - 1]$ is a hyperfinite set, and $2\omega + a = \omega + (\omega + a)$, this proves the claim.
\end{proof}
	
Using this machinery, we can now give a precise description of the function $\text{Ulam}$. Let $m_1, m_2$ be integers, let $s^{(l)} = \{s_i\}_{i = 0}^{l - 1}$ be a binary sequence of length $l$, and define the set
	\begin{align*}
	A\left(m_1, m_2, s^{(l)}\right) = \left\{n \in \ZZ_{\geq 0} \middle| n \in [m_1,m_2], \ s^{(l)}_{n - m_1 \mod l} = 1\right\}.
	\end{align*}

\begin{theorem}\label{RigidityTheorem}
Let $a$ be a positive integer. For every positive integer $C$, there exist integers $L,N_0 \geq 1$ such that for every congruence class $c \mod L$, if $N \geq N_0$ and $N \equiv c \mod L$, we can decompose $U(a,N)$ as a disjoint union,
	\begin{align*}
	U(a,N) \cap [1,CN] = \left(\bigcup_{1}^\infty\bigsqcup_{i_l = 1}^\infty A\left(m_{i_l}N + p_{i_l}, k_{i_l}N + r_{i_l},s^{(l)}_{i_l}\right)\right) \cap [1,CN],
	\end{align*}
			
\noindent such that $m_{i_l} = k_{i_l}$ if and only if $l = 1$. Furthermore, the integer coefficients $m_{i_l}, k_{i_l}, r_{i_l}, s_{i_l}$ and the binary sequences $s^{(l)}_{i_l}$ depend only on the congruence class $c$.
\end{theorem}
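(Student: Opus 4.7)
The plan is to use the model-theoretic framework of Lemma \ref{FirstOrderDescription} combined with a recursive extension of Lemma \ref{Initial Segment}, and then to transfer the resulting structural description from the hypernaturals back to the naturals. Fix $a$ and $C$, and pick a nonstandard $\omega \in \HN$ larger than every standard integer; arrange also for $\omega$ to lie in a prescribed congruence class $c$ modulo an integer $L$ that will be chosen in the course of the argument. The Ulam subset $U(a,\omega) \subset \HN$ is well-defined by the first-order axioms of Lemma \ref{FirstOrderDescription}. The idea is to describe $U(a,\omega) \cap [1,C\omega]$ explicitly as a disjoint union of block sets $A(m\omega+p, k\omega+r, s^{(l)})$ with standard integer coefficients depending only on $c$, and then to conclude that the same description governs $U(a,N) \cap [1,CN]$ for every sufficiently large standard $N \equiv c \pmod{L}$.

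First I would prove by induction on $q = 1,2,\ldots,C$ that $U(a,\omega) \cap [1, q\omega + O(1)]$ is a finite disjoint union of sets of the claimed form, with all parameters standard integers depending only on $c$. The base case $q \leq 2$ is essentially Lemma \ref{Initial Segment}: it produces two ``point'' blocks (the $l=1$ case, with $m_i = k_i$) at $a$ and $2\omega+a$, and one periodic block covering $[\omega, 2\omega+a-1] \cap (\omega + a\HN)$. For the inductive step, suppose the structure is known up through some $[1, q\omega + O(1)]$. The next element of $U(a,\omega)$ above this range is the smallest $n$ with a unique representation as $u+v$ with $u<v$ in the previously described blocks, and every such representation corresponds to picking a pair $A_i, A_j$ of prior blocks and an element of $A_i + A_j$ landing on $n$. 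Each such sumset is itself a union of periodic block-type sets (with period dividing $\operatorname{lcm}(l_i, l_j)$), and the total number of representations of $n$ as $u+v$ depends only on (i) the integer portion $n - q\omega$, (ii) the congruence class of $\omega$ modulo an integer built from $a$ and the finitely many periods appearing in prior blocks, and (iii) whether $n$ lies near a block-boundary. By choosing $L$ divisible by $a$ and by the $\operatorname{lcm}$ of all periodicities that can arise through the inductive process up to height $C\omega$---a finite list, since at most $O(C)$ inductive steps occur---the count of representations becomes a function purely of $c$ and $n - q\omega$, which determines both the location of the next element and the form of the next block solely in terms of $c$.

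The main obstacle is controlling the block data that arises. Concretely, one must verify that the recursive recipe ``smallest $n$ with unique representation'' always lands on elements of the form (standard integer) $\cdot\, \omega +$ (standard integer), and produces new blocks whose parameters $(m_i, p_i, k_i, r_i, s^{(l_i)})$ are again standard; in particular, one must check that no boundary effect between adjacent blocks can create a ``wild'' period or shift that depends nonstandardly on $\omega$. Because every representation of $n$ is a sum of two standard-affine expressions in $\omega$, and because the representation counts are given by standard combinatorial formulas evaluated on residues modulo $L$, a careful but finite bookkeeping closes the induction. Once this is in hand, the transfer principle applied to the first-order statement ``for all $N$ sufficiently large with $N \equiv c \pmod{L}$, $U(a, N) \cap [1, CN]$ equals the prescribed union of blocks'' yields the theorem, with $N_0$ determined by the largest standard parameter that appears in the induction.
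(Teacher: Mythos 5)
Your overall strategy is the same as the paper's: pass to an infinite hypernatural $\omega$, build the decomposition of $U(a,\omega)\cap[1,C\omega]$ recursively out of blocks $A(m\omega+p,k\omega+r,s^{(l)})$ with standard coefficients, take $L$ to be the least common multiple of the finitely many periods that arise, and transfer back. The base case via Lemma \ref{Initial Segment}, the halting argument (only finitely many integer ``levels'' $q\leq C$), and the final appeal to the transfer principle all match the paper.

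However, there is a genuine gap at exactly the point you flag as ``the main obstacle.'' Your justification that the recursion lands on standard-affine points is the assertion that ``every representation of $n$ is a sum of two standard-affine expressions in $\omega$.'' This is false as stated: an element of a long block $A(m\omega+p,k\omega+r,s^{(l)})$ need not be of the form $(\text{standard})\cdot\omega+(\text{standard})$ --- for instance it can sit at nonstandard distance from both endpoints of its block --- so a priori the next Ulam element $u_1=a_1+a_2$ could have a summand deep in the interior of a block, and then $u_1$ itself would not be standard-affine. The paper's proof of this step is a genuine argument, not bookkeeping: if $a_2$ lies at nonstandard distance from its block's endpoints, one subtracts a standard $N$ that is a multiple of all periods so far, obtaining a representation of $u_1-N$; since $u_1-N\notin U(a,\omega)$ there is a second representation $u_1-N=a_1'+a_2'$, and the uniqueness of the representation of $u_1=a_1'+a_2'+N$ then forces $a_1',a_2'$ to lie within standard distance of their block endpoints, which is what makes $u_1$ standard-affine. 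A parallel argument is needed for the right endpoint $u_2$ of each periodic block. Likewise, your claim that the representation count depends only on the residue class and on $n-q\omega$ glosses over the self-referential term: the count of representations of $c\omega+d'$ includes the summand $a+(c\omega+d'-a)$, where membership of $c\omega+d'-a$ in $U(a,\omega)$ is part of what is currently being determined; the paper separates this contribution off from the contribution of previously constructed blocks and handles it by a short separate induction. Without these two arguments the induction does not close, so the proposal is a correct plan but not a proof.
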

	
\begin{proof}
The general approach is to prove this statement with $N_0,N$ replaced with hypernaturals $\omega,\omega'$ larger than any standard natural, and then use the transfer principle to conclude that the original statement over the naturals is also true.
	
We fix a hypernatural $\omega$ larger than any standard natural. We shall show that we can decompose
	\begin{align*}
	U(a,\omega) \cap [1,C\omega] = \left(\bigcup_{l = 0}^\infty\bigsqcup_{i_l = 1}^\infty A\left(m_{i_l}\omega + p_{i_l}, k_{i_l}\omega + r_{i_l},s^{(l)}_{i_l}\right)\right) \cap [1,C\omega],
	\end{align*}
			
\noindent such that the coefficients are all integers and $m_{i_l} = p_{i_l}$ if and only if $l = 1$. Our approach is to consider a recursive algorithm for computing the sets
	\begin{align*}
	A\left(m_{i_l}\omega + p_{i_l}, k_{i_l}\omega + r_{i_l},s^{(l)}_{i_l}\right),
	\end{align*}
		
\noindent defined as follows: given that we have decomposed $U(a,\omega) \cap [1, c\omega + d]$ for some integers $c,d$ into sets of the desired form, choose the smallest element $u_1 \in U(a,\omega)$ larger than $c\omega + d$---we shall show that $u_1 = c'\omega + d'$ for some integers $c',d'$. As an aside, note that there always exists a smallest element larger than a given element of $U(a,\omega)$, since $U(a,\omega)$ is hyperfinite. If for all $n \in \NN$ the binary sequence
	\begin{align*}
	s_n = \begin{cases} 1 & \text{if } c'\omega + d' + n \in U(a,\omega) \\ 0 & \text{if } c'\omega + d' + n \notin U(a,\omega) \end{cases}
	\end{align*}
		
\noindent is periodic for some minimal period $l$, then we choose the smallest element $u_2$ of $U(a,\omega)$ such that
	\begin{align*}
	A\left(c'\omega + d', u_2, s_n^{(l)}\right) \subset U(a,\omega) \cap [1,C\omega],
	\end{align*}
		
\noindent and adjoin it to the existing decomposition---we shall similarly show that $u_2 = c''\omega + d''$ for some integers $c'', d''$. Otherwise, we add the set
	\begin{align*}
	A\left(c'\omega + d', c'\omega + d', 1\right).
	\end{align*}
		
\noindent To prove that this algorithm is well-defined and halts, we need to show three things. First, we must show that the smallest element $u_1$ in $U(a,\omega)$ larger than $c\omega + d$ is of the form $c'\omega + d'$. Secondly, if the binary sequence $s_n$ becomes periodic, then the largest element $u_2$ in $U(a,\omega)$ such that
	\begin{align*}
	A\left(c'\omega + d', u_2, s_n^{(l)}\right) \subset U(a,\omega) \cap [1,C\omega]
	\end{align*}
		
\noindent is of the form $u_2 = c''\omega + d''$. Finally, for any integer $c > 0$, there exists an integer $d$ such that $s_n$ becomes periodic at $c\omega + d$.
	
If all of these criteria are met, then it is clear that the given algorithm will produce a decomposition of $U(a,\omega)$ of the desired type in finite time---it must eventually halt, since there are only finitely many integers less than $C$, and for every $c \leq C$ there are only finitely many sets $A$ that start at $c\omega + d$ for some $d$. We already know by Lemma \ref{Initial Segment} that we can start with a decomposition up to $2\omega + a$.
	
Suppose we have a decomposition of $U(a,\omega)$ up to $c\omega + d$, and let $u_1$ be the smallest element of $U(a,\omega)$ larger than $c\omega + d$. If $u_1 = c\omega + d'$ for some integer $d'$, then the first criterion is met automatically. Otherwise, we note that $u_1 = a_1 + a_2$ where
	\begin{align*}
	a_1 \in A_1 &= A\left(m_{i_{l_1}}\omega + p_{i_{l_1}}, k_{i_{l_1}}\omega + r_{i_{l_1}},s^{(l_1)}_{i_{l_1}}\right) \\
	a_2 \in A_2 &= A\left(m_{i_{l_2}}\omega + p_{i_{l_2}}, k_{i_{l_2}}\omega + r_{i_{l_2}},s^{(l_1)}_{i_{l_2}}\right) .
	\end{align*}
		
\noindent If there exists an integer $N > 0$ such that
	\begin{align*}
	a_1 \in A_1 \cap \left([m_{i_{l_1}}\omega + p_{i_{l_1}}, m_{i_{l_1}}\omega + p_{i_{l_1}} + N] \cup [k_{i_{l_1}}\omega + r_{i_{l_1}} - N, k_{i_{l_1}}\omega + r_{i_{l_1}}]\right) \\
	a_2 \in A_2 \cap \left([m_{i_{l_2}}\omega + p_{i_{l_2}}, m_{i_{l_2}}\omega + p_{i_{l_2}} + N] \cup [k_{i_{l_2}}\omega + r_{i_{l_2}} - N, k_{i_{l_2}}\omega + r_{i_{l_2}}]\right),
	\end{align*}
		
\noindent then it is clear that $u_1 = a_1 + a_2$ can be written as $c'\omega + d'$ for some integers $c',d'$. Otherwise, without loss of generality, $a_2 - N \in U(a,\omega)$ for some integer $N > 0$ which we can take to be a multiple of every period $l$ found thus far in the decomposition. We have that $u_1 - N = a_1 + (a_2 - N)$ is a partition into elements of the Ulam sequence, but we know that $u_1 - N \notin U(a,\omega)$ for any integer $N > 0$, and so there must exist a second partition $u_1 - N = a_1' + a_2'$ such that $a_1', a_2'$ are distinct elements of $U(a,\omega)$.
	
This implies $u_1 = a_1' + a_2' + N$---for this not to give a second partition of $u_1$, it must be that $a_1' + N_1, a_2' + N_2 \notin U(1,\omega)$ for any $N_1,N_2 > 0$ such that $N = N_1 + N_2$. However, we know that
	\begin{align*}
	a_1' \in A_1' &= A\left(m_{i_{l_1'}}\omega + p_{i_{l_1'}}, k_{i_{l_1'}}\omega + r_{i_{l_1'}},s^{(l_1')}_{i_{l_1'}}\right) \\
	a_2' \in A_2' &= A\left(m_{i_{l_2'}}\omega + p_{i_{l_2'}}, k_{i_{l_2'}}\omega + r_{i_{l_2'}},s^{(l_1')}_{i_{l_2'}}\right),
	\end{align*}
		
\noindent and since we chose $N$ so that it is a multiple of every period $l$ found thus far in the decomposition, the only way that it is possible that $a_1' + N_1, a_2' + N_2 \notin U(1,\omega)$ for all possible choices of $N_1$ and $N_2$ is if there exists an integer $M > 0$ such that
	\begin{align*}
	a_1' \in A_1' \cap \left([m_{i_{l_1'}}\omega + p_{i_{l_1'}}, m_{i_{l_1'}}\omega + p_{i_{l_1'}} + M] \cup [k_{i_{l_1'}}\omega + r_{i_{l_1'}} - M, k_{i_{l_1'}}\omega + r_{i_{l_1'}}]\right) \\
	a_2' \in A_2' \cap \left([m_{i_{l_2'}}\omega + p_{i_{l_2'}}, m_{i_{l_2}'}\omega + p_{i_{l_2}'} + M] \cup [k_{i_{l_2}'}\omega + r_{i_{l_2}'} - M, k_{i_{l_2}'}\omega + r_{i_{l_2}'}]\right),
	\end{align*}
		
\noindent and since $u_1 = a_1' + a_2' + N$, we conclude that it can be written in the form $c'\omega + d'$ for some integers $c',d'$.
	
Next, we prove that for every $c\omega + d \in U(a,\omega)$, there exists an integer $d' \geq d$ such that the binary sequence $s_n$ is periodic. Equivalently, we must show that the function
	\begin{align*}
	R: \left\{c\omega + d' \middle| d' \in \NN, d' \geq d\right\} &\rightarrow \{0,1,\infty\}
	\end{align*}
		
\noindent that counts the number of partitions of $c\omega + d'$ as distinct elements of the Ulam sequence is eventually periodic, round up to $\infty$ if there is more than one partition. We define a binary operation $+$ on $\{0,1,\infty\}$ in the obvious way, rounding up to $\infty$ if the sum is greater than $1$.
	
By construction, there are only finitely many pairs of sets
	\begin{align*}
	A_1 &= A\left(m_{i_{l_1}}\omega + p_{i_{l_1}}, k_{i_{l_1}}\omega + r_{i_{l_1}},s^{(l_1)}_{i_{l_1}}\right) \\
	A_2 &= A\left(m_{i_{l_2}}\omega + p_{i_{l_2}}, k_{i_{l_2}}\omega + r_{i_{l_2}},s^{(l_1)}_{i_{l_2}}\right)
	\end{align*}
		
\noindent already in the set such that there exist elements $a_1 \in A_1, a_2 \in A_2$ such that $a_1 + a_2 \in c\omega + d + \NN$---let the set of such pairs be denoted by $\mathcal{A}$. Note that for any pair $(A_1, A_2) \in \mathcal{A}$, the function
	\begin{align*}
	f_{A_1,A_2}: \left\{c\omega + d' \middle| d' \in \NN, d' \geq d\right\} &\rightarrow \{0,1,\infty\} \\
	c\omega + d' &\mapsto \sum_{\substack{a_1 + a_2 = c\omega + d' \\ (a_1,a_2) \in (A_1,A_2)}} 1 \\
	&= \sum_{a_1 \in A_1} 1_{c\omega + d' - a_1 \in A_2}
	\end{align*}
		
\noindent is eventually periodic, as it is the sum of functions that are eventually periodic. From this, it follows that the function
	\begin{align*}
	F: \left\{c\omega + d' \middle| d \in \NN, d \geq d'\right\} &\rightarrow \{0,1,\infty\} \\
	c\omega + d' &\mapsto \sum_{(A_1, A_2) \in \mathcal{A}} f_{A_1, A_2}(c\omega + d')
	\end{align*}
		
\noindent must eventually be periodic. Note that
	\begin{align*}
	R(c\omega + d') = \begin{cases} F(c\omega + d') & \text{if } d' - d \leq a \\ F(c\omega + d') + 1_{c\omega + d' - a \in U(a,\omega)} & \text{otherwise} \end{cases}.
	\end{align*}
		
\noindent For sufficiently large $d'$, $c\omega + d' - a \in U(a,\omega)$ if and only if $F(c\omega + d' - a) = 1$ and $c\omega + d' - 2a \notin U(a,\omega)$ or $F(c\omega + d' - a) = 0$ and $c\omega + d' - 2a \in U(a,\omega)$. Consequently, $R(c\omega + d')$ is eventually periodic, as desired.
	
Finally, it remains to show that the largest element $u_2$ such that
	\begin{align*}
	A\left(c\omega + d, u_2, s_n^{(l)}\right) \subset U(a,\omega) \cap [1,C\omega]
	\end{align*}
			
\noindent can be written in the form $u_2 = c'\omega + d'$ for some integers $c',d'$. If $u_2 = C\omega$, we are done. Otherwise, we note that $u_2$ is of the desired form if and only if $u_2 - l'$ for all $0 \leq l' \leq l$ is of the desired form. So, we choose $u_2 - l'$ such that exactly one of $u_2 - l', u_2 + l - l'$ is in $U(a,\omega)$.
	
We begin by assuming $u_2 - l' \in U(a,\omega)$. Consequently, $u_2 - l' = a_1 + a_2$, where
	\begin{align*}
	a_1 \in A_1 \cap \left([m_{i_{l_1}}\omega + p_{i_{l_1}}, m_{i_{l_1}}\omega + p_{i_{l_1}} + N] \cup [k_{i_{l_1}}\omega + r_{i_{l_1}} - N, k_{i_{l_1}}\omega + r_{i_{l_1}}]\right) \\
	a_2 \in A_2 \cap \left([m_{i_{l_2}}\omega + p_{i_{l_2}}, m_{i_{l_2}}\omega + p_{i_{l_2}} + N] \cup [k_{i_{l_2}}\omega + r_{i_{l_2}} - N, k_{i_{l_2}}\omega + r_{i_{l_2}}]\right).
	\end{align*}
	
\noindent If there exists an integer $N > 0$ such that
	\begin{align*}
	a_1 \in A_1 \cap \left([m_{i_{l_1}}\omega + p_{i_{l_1}}, m_{i_{l_1}}\omega + p_{i_{l_1}} + N] \cup [k_{i_{l_1}}\omega + r_{i_{l_1}} - N, k_{i_{l_1}}\omega + r_{i_{l_1}}]\right) \\
	a_2 \in A_2 \cap \left([m_{i_{l_2}}\omega + p_{i_{l_2}}, m_{i_{l_2}}\omega + p_{i_{l_2}} + N] \cup [k_{i_{l_2}}\omega + r_{i_{l_2}} - N, k_{i_{l_2}}\omega + r_{i_{l_2}}]\right),
	\end{align*}
		
\noindent then we are done. Otherwise, without loss of generality, $a_2 + l \in U(a,\omega)$, and therefore $u_2 - l' + l = a_1 + (a_2 + l)$ is a partition into elements of the Ulam sequence. Since $u_2 - l' + l \notin U(a,\omega)$, there must exist a second partition $u_2 - l' + l = a_1' + a_2'$ such that $a_1', a_2'$ are distinct elements of $U(a,\omega)$.
	
This implies $u_2 - l' = a_1' + a_2' - l$---for this not to give a second partition of $u_2 - l'$, it must be that $a_1' - N_1, a_2' - N_2 \notin U(1,\omega)$ for any $N_1,N_2 > 0$ such that $l = N_1 + N_2$. However, we know that
	\begin{align*}
	a_1' \in A_1' &= A\left(m_{i_{l_1'}}\omega + p_{i_{l_1'}}, k_{i_{l_1'}}\omega + r_{i_{l_1'}},s^{(l_1')}_{i_{l_1'}}\right) \\
	a_2' \in A_2' &= A\left(m_{i_{l_2'}}\omega + p_{i_{l_2'}}, k_{i_{l_2'}}\omega + r_{i_{l_2'}},s^{(l_1')}_{i_{l_2'}}\right),
	\end{align*}
		
\noindent and since by construction $l$ is a multiple of $l_1', l_2'$, the only way that it is possible that $a_1' - N_1, a_2' - N_2 \notin U(1,\omega)$ for all possible choices of $N_1$ and $N_2$ is if there exists an integer $M > 0$ such that
	\begin{align*}
	a_1' \in A_1' \cap \left([m_{i_{l_1'}}\omega + p_{i_{l_1'}}, m_{i_{l_1'}}\omega + p_{i_{l_1'}} + M] \cup [k_{i_{l_1'}}\omega + r_{i_{l_1'}} - M, k_{i_{l_1'}}\omega + r_{i_{l_1'}}]\right) \\
	a_2' \in A_2' \cap \left([m_{i_{l_2'}}\omega + p_{i_{l_2'}}, m_{i_{l_2}'}\omega + p_{i_{l_2}'} + M] \cup [k_{i_{l_2}'}\omega + r_{i_{l_2}'} - M, k_{i_{l_2}'}\omega + r_{i_{l_2}'}]\right),
	\end{align*}
		
\noindent which settles the matter.
	
The second case that $u_2 - l' + l \in U(a,\omega)$, $u_2 - l' \notin U(a,\omega)$ proceeds in the same way. Ergo,
	\begin{align*}
	U(a,\omega) \cap [1,C\omega] = \left(\bigcup_{l = 0}^\infty\bigsqcup_{i_l = 1}^\infty A\left(m_{i_l}\omega + p_{i_l}, k_{i_l}\omega + r_{i_l},s^{(l)}_{i_l}\right)\right) \cap [1,C\omega],
	\end{align*}
		
\noindent as desired. To conclude the proof, we note that only finitely many sets
	\begin{align*}
	A\left(m_{i_l}\omega + p_{i_l}, k_{i_l}\omega + r_{i_l},s^{(l)}_{i_l}\right)
	\end{align*}
		
\noindent can intersect $[1,C\omega]$, and so we can take $L$ to be the lowest common multiple of all the periods $l$ corresponding to such sets. The coefficients $m_i, p_i, k_i, r_i$ and the binary sequences $s_i^{(l)}$ can only depend on the congruence class of $\omega$ modulo $L$, since this is the only information used by the decomposition algorithm to compute these coefficients.
	
We have shown that there exists $\omega \in \HN$ with the property that for all $\omega' \geq \omega$ such that $\omega' \equiv \omega \mod L$,
	\begin{align*}
	U(a,\omega') \cap [1,C\omega'] = \left(\bigcup_{l = 0}^\infty\bigsqcup_{i_l = 1}^\infty A\left(m_{i_l}\omega' + p_{i_l}, k_{i_l}\omega' + r_{i_l},s^{(l)}_{i_l}\right)\right) \cap [1,C\omega'].
	\end{align*}
		
\noindent This statement can be captured by first order logic, and therefore we know that by the transfer principle it must also be true over the naturals. This concludes the proof.
\end{proof}

A priori, there is nothing preventing $N_0, L \rightarrow \infty$ as $C \rightarrow \infty$---Theorem \ref{RigidityTheorem} therefore can only be used to prove statements that involve some initial segment of $(2,1)$-Ulam sets. While this is remarkable in and of itself, the numerical data suggests that something far, far stronger is true.

\begin{conjecture}[Rigidity Conjecture]\label{Rigidity Conjecture}
Let $a$ be a positive integer. There exists an integer $L \geq 1$ such that for every $c \in \left(\ZZ/L\ZZ\right)^\times$ there exists a positive integer $N_0 \equiv c \mod L$ with the property that $\forall N \geq N_0$ satisfying $N \equiv c \mod L$, we can decompose $U(a,N)$ as a disjoint union,
	\begin{align*}
	U(a,N) = \left(\bigcup_{l = 1}^\infty\bigsqcup_{i_l = 1}^\infty A\left(m_{i_l}N + p_{i_l}, k_{i_l}N + r_{i_l},s^{(l)}_{i_l}\right)\right),
	\end{align*}
			
\noindent such that $m_{i_l} = k_{i_l}$ if and only if $l = 1$.
\end{conjecture}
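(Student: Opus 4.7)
The plan is to strengthen the reasoning of Theorem~\ref{RigidityTheorem}. The obstacle in that theorem is that the number of ``active'' components of the decomposition grows with $C$, so both $L$ and $N_0$ can in principle diverge as $C\to\infty$. To obtain the conjecture I would try to show instead that the collection of active components eventually stabilizes, so that the decomposition algorithm enters a periodic regime that generates the whole of $U(a,N)$ rather than just an initial segment. As in Theorem~\ref{RigidityTheorem}, I would work inside $\HN$ with a nonstandard $\omega$ and then transfer: the target over $\NN$ can be phrased as a first-order statement about the set of decomposition data produced by the algorithm, so a periodicity theorem over $\HN$ would suffice.

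Concretely, I would formalize a notion of \emph{state} at each position $c\omega+d$ in the hypernatural decomposition: the finite collection of already-produced sets $A(m_i\omega+p_i, k_i\omega+r_i, s_i^{(l)})$ whose remaining elements can still appear as summands for sums at or beyond $c\omega+d$, together with the ``phase'' in each of these sets' periods. Only finitely many pairs of such sets can produce sums in any one band $\{c\omega+d : d \in \NN\}$, so the state is finite at each step; the question is whether, as we sweep through bands indexed by $c$, this state admits a uniform bound. I would next look for a transition function on states so that, given the current state together with the congruence class of $\omega$ modulo $L$ (where $L$ is the lcm of the currently active periods), one can read off the next piece of the decomposition. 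If the space of reachable states were finite, the sweep would have to enter a cycle, and the same transfer-principle argument used in Theorem~\ref{RigidityTheorem} would promote the cyclic structure into the full rigidity statement over $\NN$.

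The main obstacle, which really is the content of the conjecture, is precisely bounding this state space. The algorithm of Theorem~\ref{RigidityTheorem} introduces a new piece, and potentially a new period, whenever a genuinely novel partition structure appears, and \emph{a priori} such novel structures could keep appearing at larger and larger heights above $\omega$. Ruling this out seems to require a new input: perhaps a combinatorial invariant controlling how $U(a,\omega)$ can overlap itself under translation by multiples of $a$, or a sharper nonstandard argument using overspill to promote ``eventual local periodicity in every bounded band'' to ``eventual global periodicity''. One natural test case would be $a=1$, where Theorem~\ref{Rigidity for a=1} already extracts a long uniform initial segment; pushing that theorem inductively and showing that the induction step does not enlarge $L$ would at least establish the conjecture for $a=1$ and would suggest the right state-space invariant for general $a$. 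Absent such an input, the inductive step can only be controlled in any fixed band, which is exactly what Theorem~\ref{RigidityTheorem} already gives, and extending to all of $U(a,\omega)$ remains open.
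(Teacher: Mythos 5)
The statement you are trying to prove is labeled as a \emph{conjecture} in the paper, and the authors explicitly state that they cannot prove it unconditionally --- the most they establish is Theorem~\ref{RigidityTheorem}, which gives the decomposition only on initial segments $[1,CN]$ with $L$ and $N_0$ allowed to depend on $C$. So there is no proof in the paper to compare yours against, and your proposal does not supply one either: as you yourself concede, the decisive step --- showing that the state space of active components and periods is uniformly bounded so that the decomposition algorithm over $\HN$ eventually cycles --- is left entirely open, and that step \emph{is} the conjecture. Everything before that point (working over the hypernaturals, tracking which sets $A(m_i\omega+p_i, k_i\omega+r_i, s_i^{(l)})$ can contribute summands in a given band, transferring back to $\NN$) is a faithful restatement of the machinery already used in Theorem~\ref{RigidityTheorem}, and adds nothing beyond what that theorem gives.

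To be concrete about why the gap is genuine and not merely technical: the algorithm in Theorem~\ref{RigidityTheorem} may introduce a new period $l$ each time a new band $c\omega + d + \NN$ exhibits a partition-counting function $R$ with a period not dividing the lcm of the previous ones, and nothing in the paper's argument (or in your sketch) prevents the sequence of periods from growing without bound as $c\to\infty$, which would force $L\to\infty$. Your suggested remedies --- a combinatorial invariant controlling self-overlaps of $U(a,\omega)$ under translation, or an overspill argument promoting bandwise periodicity to global periodicity --- are plausible directions but are not carried out, and the $a=1$ test case via Theorem~\ref{Rigidity for a=1} is itself conditional on an unverified rigidity hypothesis for some $U(1,N_0)$. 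Your write-up is a reasonable research program, and it is to your credit that you identify the obstruction precisely rather than papering over it, but it is not a proof.
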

    
\section{Rigidity Results for $U(1,n)$}\label{Special Rigidity Section}
We now examine the special case $U(1,n)$ more closely. We will show that in this case we can prove rigidity results like Theorem \ref{RigidityTheorem}, but with two key differences: we will show that we can explicitly take $L = 1$, and unlike the methodology of the previous section, the proofs are constructive, with bounds on the coefficients.

First, note that there is a unique decomposition
\begin{align*}
U(1,n) = \bigsqcup_{i = 1}^\infty A_i(n)
\end{align*}

\noindent where for each $i$,
\begin{align*}
A_i(n) &= A\left(a_i(n), b_i(n), 1^{(1)}\right) \\
	&= \left[a_i(n), b_i(n)\right] \cap \ZZ,
\end{align*}

\noindent for some integers $b_{i - 1} < a_i \leq b_i < a_{i + 1}$. We call the $A_i$ the \emph{intervals} of $U(1,n)$. If $a_i = b_i$, we shall call $A_i$ an \emph{isolated point}. Otherwise, we call $A_i$ a \emph{long interval} with endpoints $a_i, b_i$, and we call $A_i \ \backslash \ \{a_i, b_i\}$ the \emph{interior} of $A_i$. Based on numerical data, we make the following conjecture.

\begin{conjecture}\label{NumericalRigidity}
There exists an integer $N_0 > 1$ such that for all $n \geq N_0$, the coefficients $a_i(n)$ and $b_i(n)$ are linear functions in $n$ with bounded coefficients---to be precise,
\begin{align*}
	a_i(n) &= (n + B)m_i + \varepsilon_i \\
    b_i(n) &= (n + B)p_i + \delta_i,
\end{align*}

\noindent where $m_i, p_i, \varepsilon_i, \delta_i$ do not depend on $n$, $B, \varepsilon, \delta > 0$ are real constants, $m_i, p_i$ are integers, and $|\varepsilon_i| < \epsilon, |\delta_i| < \delta$.
\end{conjecture}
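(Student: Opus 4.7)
The plan is to extend the hypernatural framework of Theorem \ref{RigidityTheorem} to obtain a decomposition of \emph{all} of $U(1, \omega)$ for a nonstandard hypernatural $\omega$, rather than only the truncation $U(1,\omega) \cap [1, C\omega]$, while simultaneously tracking explicit bounds on the corrections. Transfer then yields the conjecture for standard $n$, using the $L = 1$ phenomenon specific to the $a = 1$ case.

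First, I would establish the base case from the explicit description of $U(1, n) \cap [1, 6n]$ proved in Section \ref{Applications}, which verifies the conjectured form for the first several intervals with concrete corrections. Then, proceeding by strong induction on the interval index $i$, suppose $A_1, \ldots, A_{i-1}$ have endpoints of the form $m_j(n + B) + \varepsilon_j$ and $p_j(n+B) + \delta_j$ with uniform bounds on $|\varepsilon_j|, |\delta_j|$. The next element $a_i(n)$ is the smallest integer beyond $b_{i-1}(n)$ admitting a unique representation $u + v$ with $u < v$ both in $U(1, n) \cap [1, b_{i-1}(n)]$. Sums $u + v$ with $u \in A_j$, $v \in A_{j'}$ are themselves intervals with integer-linear endpoints $(m_j + m_{j'})(n + B) + (\varepsilon_j + \varepsilon_{j'})$, and the representation-counting function $R$ from the proof of Theorem \ref{RigidityTheorem} is piecewise determined by finitely many such sum-intervals. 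From this, $a_i(n)$ and $b_i(n)$ can be read off explicitly in the same linear-with-bounded-correction form, closing the inductive step pointwise in $i$.

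The main obstacle is closing the induction with bounds on $|\varepsilon_i|, |\delta_i|$ that are \emph{uniform} in $i$, that is, showing that the constants $\varepsilon, \delta$ in the statement are finite. A natural strategy is to encode the data produced by the intervals $A_j$ with $j < i$ into a finite ``state'' recording, up to translation, the local configuration of corrections among intervals whose slopes contribute to sum-representations near $a_i(n)$. If the Ulam recursion, viewed as a deterministic map on this state space, reaches only finitely many states, then by pigeonhole the sequence of corrections is eventually periodic and hence uniformly bounded, and the universal shift $B$ emerges as the offset synchronizing this periodic dynamics with the base case. Unfortunately, Theorem \ref{RigidityTheorem} as stated permits $N_0(C), L(C) \to \infty$ as $C \to \infty$, which is precisely what the uniform boundedness of the corrections must rule out. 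Foreclosing such drift appears to require a genuinely new structural insight, a monovariant, combinatorial pigeonhole, or self-similarity argument, that lies beyond the purely model-theoretic methods of the previous section, which is presumably why the statement is offered as a conjecture rather than a theorem.
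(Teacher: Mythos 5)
The statement you are trying to prove is Conjecture \ref{NumericalRigidity}; the paper offers no proof of it, and explicitly says that, like Conjecture \ref{Rigidity Conjecture}, it cannot be proved unconditionally with the methods at hand. So there is no proof in the paper to match your attempt against, and your own closing paragraph correctly diagnoses why the attempt does not close: the model-theoretic machinery of Theorem \ref{RigidityTheorem} only controls $U(a,N)\cap[1,CN]$ for each fixed $C$, with $N_0(C)$ and $L(C)$ allowed to drift, and nothing in that framework forces the corrections $\varepsilon_i,\delta_i$ to be bounded uniformly over all interval indices $i$. Your proposed remedy --- a finite state space of local correction configurations plus pigeonhole --- is exactly the kind of self-similarity statement that would be needed, but you give no candidate definition of the state that is provably finite, so the inductive step on $i$ remains open. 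There is also a subtler circularity worth flagging: to ``read off'' $a_i(n)$ and $b_i(n)$ in linear form you must decide, uniformly in $n$, which of several competing linear candidates $m_j(n+B)+\varepsilon_j+m_{j'}(n+B)+\varepsilon_{j'}$ is smallest and whether it collides with another sum-interval; such comparisons are only stable in $n$ once one already assumes $n > 4\max(\varepsilon,\delta)-B+4$ for the global bounds $\varepsilon,\delta$ --- i.e.\ the very uniformity the induction is supposed to produce.

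What the paper does prove in this direction is the conditional propagation result, Theorem \ref{Rigidity for a=1}: if a single $U(1,N_0)$ (together with $U(1,N_0-1)$) is $(M,B,\varepsilon,\delta)$-rigid for a \emph{fixed finite} $M$, then so is every $U(1,N)$ with $N>N_0$. The induction there runs over $n$ (via Lemma \ref{OneElementRigidity}) rather than over the interval index, and the two key ingredients are Lemma \ref{RecursiveIntervals} (each new endpoint is $a_j+a_k+c$ or $b_j+b_k+c$ with $|c|\le 2$ --- essentially the ``sums of intervals are intervals with linear endpoints'' observation in your second paragraph) and Lemma \ref{EndpointBounds} (stability of the relevant inequalities between linear forms as $n$ varies, granted the a priori bound on $n$). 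Note that even this theorem does not yield the conjecture: $M$ stays fixed, so the uniformity-in-$i$ problem you identified is sidestepped rather than solved. If you want a provable statement, reorient your argument along the paper's axis --- fix finitely many intervals, verify rigidity computationally at one $N_0$, and propagate in $n$ --- rather than trying to induct through all infinitely many intervals at a single (hyper)natural parameter.
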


Numerical data suggests that we can take $N_0 = 4$, $B \approx 0.139$, and $\epsilon, \delta 
\leqslant 2.5$. However, much like Conjecture \ref{Rigidity Conjecture}, we cannot prove this result unconditionally. We can prove, on the other hand, that if the intervals of some $U(1,N_0)$ satisfy some growth conditions, then the intervals of $U(1,N)$ with $N \geq N_0$ satisfy those growth conditions.

We shall say that $U(1,n)$ is $(M,B,\varepsilon,\delta)$-\emph{rigid with coefficients} $m_i, p_i$ if
	\begin{align*}
    n > 4\max(\varepsilon,\delta)-B+4
    \end{align*}
    
\noindent and for all $1 \leq i \leq M$,
	\begin{align*}
    a_i(N) &= (N + B)m_i + \varepsilon_i \\
    b_i(N) &= (N + B)p_i + \delta_i,
    \end{align*}
    
\noindent with $|\varepsilon_i| < \varepsilon, |\delta_i| < \delta$, where $N \in \{n, n - 1\}$ and $a_i, b_i$ are the endpoints of the $i$-th interval of $U(1,N)$.

\begin{theorem}\label{Rigidity for a=1}
Suppose $U(1,N_0)$ is $(M,B,\varepsilon,\delta)$-rigid with coefficients $m_i, p_i$. Then for all $N > N_0$, $U(1,N)$ is $(M,B,\varepsilon,\delta)$-rigid with coefficients $m_i, p_i$.
\end{theorem}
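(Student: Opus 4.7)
The plan is to argue by strong induction on $N \geq N_0$. The base case $N = N_0$ is given. For the inductive step, assume $U(1, N)$ is $(M, B, \varepsilon, \delta)$-rigid with coefficients $m_i, p_i$ — which is precisely the assertion that the first $M$ intervals of both $U(1, N-1)$ and $U(1, N)$ obey the affine formulas — and deduce that the first $M$ intervals of $U(1, N+1)$ obey them as well. This in turn is handled by a secondary induction on $i \leq M$: granted that the predicted intervals $A_1(N+1), \ldots, A_{i-1}(N+1)$ coincide with the actual initial segment of $U(1, N+1)$, we verify the $i$-th.

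The main technical device is a \emph{shift correspondence} $\sigma_N$ sending $u \in A_i(N)$ to $u + m_i$ when $u$ is closer to $a_i(N)$ and to $u + p_i$ when $u$ is closer to $b_i(N)$. Since $m_i, p_i$ are integers, $\sigma_N$ injects $A_i(N)$ into $A_i(N+1)$ with image missing exactly $p_i - m_i$ consecutive integers from the middle of $A_i(N+1)$. Applying the analogous shift $\sigma_{N-1}$ from $U(1, N-1)$ into $U(1, N)$ identifies precisely which integers are ``new'' during the step $(N-1) \to N$; the affine formulas then force the corresponding new integers to appear at parallel positions during the step $N \to (N+1)$.

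The heart of the argument is a representation-counting bijection. For $u \in A_j(N)$ and $v \in A_k(N)$ write each summand either as $(N+B) m_* + \varepsilon_* + (\text{offset})$ or as $(N+B) p_* + \delta_* - (\text{offset})$, depending on which half of the containing interval it occupies. The sum $u + v$ then has leading $N$-coefficient equal to one of $m_j + m_k,\ m_j + p_k,\ p_j + m_k,\ p_j + p_k$. Since the predicted left and right coefficients of the first $M$ intervals are drawn from the fixed integer list $\{m_1, \ldots, m_M, p_1, \ldots, p_M\}$, and the hypothesis $n > 4 \max(\varepsilon, \delta) - B + 4$ forces any subleading error of the form $\varepsilon_j + \varepsilon_k - \varepsilon_i$ to be strictly smaller than the linear-in-$N$ gap $|(N+B)(m_j + m_k - m_i)|$ that would arise from an incorrect match, the linear coefficients must match exactly. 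Consequently the combinatorial pattern recording which pairs of interval-halves contribute a sum into each half of $A_i(N)$ depends only on the integer data $m_i, p_i$ and the offsets, and this pattern is identical for $U(1, N+1)$.

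Using this bijection, each integer in a predicted gap of $U(1, N+1)$ inherits either zero or at least two representations from the corresponding integer in a gap of $U(1, N)$, and each integer of a predicted interval that lies in the image of $\sigma_N$ inherits exactly one representation. The main obstacle is the ``new'' middle integers of $A_i(N+1)$ that $\sigma_N$ misses; for these, the two-rigid-level hypothesis is essential. Comparing the step $(N-1) \to N$ (where the analogous new middle elements successfully joined $U(1, N)$) against the step $N \to (N+1)$, and using that the count $p_i - m_i$ of new elements per interval is constant in $N$ while their positions are affine in $N$ with identical offsets, any representation of a new element in $U(1, N+1)$ corresponds bijectively to a representation of the new element produced at the previous step. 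Uniqueness therefore transfers from $U(1, N)$ to $U(1, N+1)$, completing the secondary induction and with it the theorem.
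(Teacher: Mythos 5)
Your overall architecture matches the paper's: an outer induction on $N$, an inner induction on the interval index, and the observation that the hypothesis $n > 4\max(\varepsilon,\delta)-B+4$ forces any comparison between affine-in-$N$ quantities to be decided by their integer leading coefficients, so that combinatorial structure transfers from one value of $N$ to the next; the two-level (``$N \in \{n, n-1\}$'') form of the rigidity hypothesis is used for exactly the same purpose in both arguments. The gap is in the central step, the ``representation-counting bijection,'' which is asserted rather than constructed and which, as stated, does not work element-by-element. If $w = u + v$ is a representation at level $N+1$ with $u$ and $v$ in (halves of) intervals $A_j$ and $A_k$, the pulled-back sum at level $N$ is $w$ minus one of the four shifts $m_j + m_k$, $m_j + p_k$, $p_j + m_k$, $p_j + p_k$, whereas the element of $A_i(N)$ you want to compare $w$ against is $w - m_i$ or $w - p_i$. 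You must show these agree -- that the relevant coefficient sums equal $m_i$ or $p_i$ -- and, just as importantly, that no representation is created or destroyed by pairs whose total shift is one of the other three values. Your sketch gestures at this (``the linear coefficients must match exactly'') but never pins down which sums can hit which targets; moreover, for $w$ deep in the interior of a long interval the summands need not lie near any endpoint, so the interval-half bookkeeping on which $\sigma_N$ is built breaks down precisely where the ``new middle integers'' live.

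The paper avoids this difficulty by never counting representations of interior elements at all: since the intervals are by definition the maximal runs of consecutive integers, it suffices to locate the two endpoints $a_M$ and $b_M$. Lemma \ref{RecursiveIntervals} shows that an endpoint is always of the form $a_j + a_k + c$ or $b_j + b_k + c$ with $|c| \le 2$, so only sums of endpoints ever need to be compared, and Lemma \ref{EndpointBounds} shows that all the needed inequalities among such sums hold for $N = n$ if and only if they hold for $N = n - 1$. To salvage your element-by-element version you would need an analogue of Lemma \ref{RecursiveIntervals} controlling representations of interior and gap elements, which is substantially more delicate; restructuring the argument around the endpoints, as the paper does, is the cleaner route.
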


To prove this theorem, we shall need several preliminary lemmas. To start, we describe how the coefficients $a_i, b_i$ depend on coefficients $a_j, b_j$ for $1 \leq i < j$.

\begin{lemma}\label{RecursiveIntervals}
Let $A_i = [a_i,b_i] \cap \ZZ$ denote the $i$-th interval of $U(1,n)$. Then the endpoint $a_i$ is of the form $a_j + a_k + c$ or $ b_j + b_k + c$ for some $1 \leq j \leq k < i$ and integer $c$, where $|c| \leq 2$.
\end{lemma}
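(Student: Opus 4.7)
The approach is to use the uniqueness of the Ulam representation of $a_i$ to trap the summands close to the endpoints of whichever intervals they inhabit.

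Since $a_i \in U(1,n)$, by definition it has a unique representation $a_i = u + v$ with $u, v$ distinct elements of $U(1,n)$ and $u < v$. Let $A_j$ and $A_k$ be the intervals of $U(1,n)$ containing $u$ and $v$, so $1 \leq j \leq k < i$. The target is to show that $(u, v)$ lies within two units of $(a_j, a_k)$ or of $(b_j, b_k)$.

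The main tool is a shift argument: if $t$ is a nonzero integer such that the perturbed pair $(u + t, v - t)$ has both components in $U(1,n)$, is different from $(u, v)$ as an unordered pair, and consists of distinct values, then it furnishes a second decomposition of $a_i$, contradicting uniqueness. Because each interval $A_l$ is a maximal run of consecutive integers in $U(1,n)$, membership tests reduce to checking position within $A_j, A_k$: specifically $u - 1 \in U(1,n)$ exactly when $u > a_j$, and $v + 1 \in U(1,n)$ exactly when $v < b_k$. Applying the shift argument at $t = -1$ forces $u = a_j$ or $v = b_k$, and at $t = +1$, provided $v \neq u + 2$, it forces $u = b_j$ or $v = a_k$.

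The conjunction of these two disjunctions yields four configurations: (i) $u = a_j$ and $v = a_k$, giving $a_i = a_j + a_k$ with $c = 0$; (ii) $u = b_j$ and $v = b_k$, giving $c = 0$; (iii) $A_j$ is isolated ($a_j = b_j = u$); (iv) $A_k$ is isolated ($a_k = b_k = v$). The main cases (i) and (ii) are immediate. For (iii) and (iv) the shift-by-one argument does not pin down the non-isolated summand, and I plan to finish by adding two ingredients: pushing the shift analysis to $t = \pm 2$ to tighten the position, and invoking the anchor decomposition $(a_i - b_j, b_j)$ (respectively $(a_i - a_j, a_j)$) to rule out configurations where the non-isolated summand sits far from a boundary of its interval. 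Together these force the non-isolated summand within two of an endpoint, yielding $|c| \leq 2$. The exceptional case $v = u + 2$ is handled in parallel at $t = \pm 2, \pm 3$, since the $t = +1$ pair collapses to coincident values there.

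The main obstacle, I expect, is the bookkeeping for cases (iii), (iv), and $v = u + 2$: the chain of alternative decompositions that must be ruled out depends on whether neighboring intervals of $A_j, A_k$ lie at distance exactly one, and on edge cases where shifted summands happen to land at boundaries of nearby intervals. These should reduce to a short finite case analysis once the generic shift dichotomy is in place, but the bookkeeping is the delicate step.
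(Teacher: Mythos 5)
Your shift-by-one dichotomy is sound and reproduces the paper's handling of the cases where both summands land on matching endpoints: the conjunction of ($u = a_j$ or $v = b_k$) with ($u = b_j$ or $v = a_k$) does reduce the problem to your configurations (i)--(iv), and (i), (ii) give $c = 0$ immediately. The genuine gap is in (iii) and (iv), and the tools you propose there will not close it. Suppose $u$ is an isolated point whose nearest Ulam neighbors lie at distance greater than, say, $3$ on both sides, while $v$ sits deep in the interior of a long interval $A_k$. Then every perturbed pair $(u + t, v - t)$ with small $t \neq 0$ fails at the $u$-end, so uniqueness of the representation of $a_i$ imposes \emph{no} constraint on the position of $v$ inside $A_k$; pushing to $t = \pm 2$ only helps in the accidental event that $u$ has an Ulam neighbor at distance exactly $2$. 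Your ``anchor decomposition'' does not rescue this: with $u = a_j = b_j$ the pair $(a_i - b_j, b_j)$ is just $(v,u)$ again, and in case (iv) the pair $(a_i - a_j, a_j)$ is generally not a decomposition into Ulam numbers at all, so there is nothing to rule out. More fundamentally, in these cases the conclusion of the lemma is \emph{not} achieved by locating $u$ and $v$ near endpoints of $A_j, A_k$; it requires exhibiting a relation $a_i = b_{j'} + b_{k'} + c$ for possibly unrelated indices $j', k'$, and a framework that only perturbs the original pair $(u,v)$ cannot produce such indices.

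The missing idea, which is how the paper finishes, is to perturb $a_i$ rather than the pair. Since $v$ is in the interior of $A_k$, the number $a_i - 1 = u + (v-1)$ has at least one representation; but $a_i - 1 \notin U(1,n)$ because $a_i$ opens a new interval, so $a_i - 1$ must have a \emph{second} representation $u' + v'$. Now $a_i = (u'+1) + v' = u' + (v'+1)$, and since $a_i$ has the unique representation $u + v$, both $u' + 1$ and $v' + 1$ must fail to lie in $U(1,n)$ (after dispatching the degenerate coincidences). Hence $u' = b_{j'}$ and $v' = b_{k'}$ are right endpoints, and $a_i = b_{j'} + b_{k'} + 1$. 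I recommend you replace the $t = \pm 2$ and anchor steps in cases (iii) and (iv) with this argument; the finite bookkeeping you anticipate (the subcases $v = u+1$, $v = u+2$, and shifted pairs colliding with $(u,v)$) is then genuinely short.
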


\begin{proof}
We know $a_i = u + v$ for some $u,v \in U(1,n)$. If $u = a_j$ for some $j$ which is not an isolated point, then either $v = a_k$ for some $k$, or $v = a_j + 1, a_j + 2$. Otherwise, $a_i = (u + 1) + (v - 1)$ gives a second representation of $a_i$. If $u = b_j$ for some $j$ which is not an isolated point, then either $v = b_k$ for some $k$, or $v = b_k - 1, b_k - 2$. Otherwise, $a_i = (u - 1) + (v + 1)$ gives a second representation of $a_i$. If $u$ is in the interior of an interval, then either $u$ and $v$ are within distance $1$ of the endpoints of that interval, or $v$ is an isolated point. Otherwise, one of $a_i = (u - 1) + (v + 1) = (u + 1) + (v - 1)$ gives a second representation of $a_i$.

By symmetry, the only case we have left to consider is where $u$ is an isolated point, and $v$ is in the interior of an interval. Note first that $a_i - 1 = u + (v - 1)$, hence it must have a second representation $a_i - 1 = u' + v'$, since $a_i \notin U(1,n)$. This implies $a_i = u' + v' + 1$, so for this not to give a second representation of $a_i$, it must be true that $u' + 1, v' + 1 \notin U(1,n)$. Consequently, $u' = b_{j'}$, $v' = b_{k'}$, and we have the desired decomposition $a_i = b_{j'} + b_{k'} + 1$.
\end{proof}

Next, we show that if the conditions of Theorem \ref{Rigidity for a=1} hold for consecutive Ulam sequences, then the bounds on $\epsilon_i, \delta_i$ imply bounds on the endpoints.

\begin{lemma}\label{EndpointBounds}
Let $U(1,n - 1)$, $U(1,n)$ be $(M,B,\varepsilon,\delta)$-rigid with coefficients $m_i, p_i$. Then for all $1 \leq i,j,k,l \leq M$, and $c,d$ such that $|c|,|d| \leq 2$, the following statements are true with $N = n - 1$ if and only if they are true with $N = n$.

\begin{enumerate}
	\item $a_i(N) + a_j(N) + c > b_M(N)$.
	\item $b_i(N) + b_j(N) + c > b_M(N)$.
	\item $a_i(N) + a_j(N) + c > b_k(N) + b_l(N) + d$.
	\item $a_i(N) + a_j(N) + c < a_k(N) + a_l(N) + d$.
	\item $b_i(N) + b_j(N) + c > b_k(N) + b_l(N)+d$.
	\item $b_i(N) + b_j(N) + c > a_k(N) + a_l(N) + d$.
\end{enumerate}
\end{lemma}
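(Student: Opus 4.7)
The plan is to treat all six inequalities uniformly by expressing each as a linear comparison in $N+B$. After moving everything to one side, the statement in each case has the shape $D(N) > 0$ (or $D(N) < 0$ in case (4)), where
\[
D(N) \;=\; \mu\,(N+B) \;+\; E,
\]
with $\mu \in \ZZ$ a fixed combination of the $m$'s and $p$'s (for instance, $\mu = m_i + m_j - p_M$ in case (1)), and $E$ a real number built from the $\varepsilon_i$'s, $\delta_i$'s, and the integers $c,d$ (e.g.\ $E = \varepsilon_i + \varepsilon_j + c - \delta_M$ in case (1)). The key structural point is that the definition of $(M,B,\varepsilon,\delta)$-rigidity prescribes the \emph{same} $\varepsilon_i$ and $\delta_i$ at $N=n-1$ and at $N=n$, so both $\mu$ and $E$ are genuinely the same quantities across these two values of $N$.

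Next I would run a short case-by-case inventory to check that $|E| < 4\max(\varepsilon,\delta)+4$ in every one of the six settings. The worst cases are (4) and (5), where $|E| < 4\varepsilon + 4$ and $|E| < 4\delta + 4$ respectively, while the others contribute strictly smaller combinations of $\varepsilon$, $\delta$, $|c|$, and $|d|$. Applying the rigidity hypothesis to both $U(1,n-1)$ and $U(1,n)$ then gives $N+B > 4\max(\varepsilon,\delta)+4 > |E|$ for $N \in \{n-1, n\}$.

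The conclusion now drops out from a trichotomy on $\mu$. If $\mu \geq 1$, then $\mu(N+B) \geq N+B > |E|$, forcing $D(N) > 0$ for both $N = n-1$ and $N = n$, so the inequality holds in both cases. If $\mu \leq -1$, symmetrically $D(N) < 0$ at both values, so the inequality fails in both cases. And if $\mu = 0$, then $D(N) = E$ is literally constant as $N$ varies over $\{n-1,n\}$, so the inequality $D(N) > 0$ is trivially equivalent at the two values. In every case the truth value at $N=n-1$ coincides with that at $N=n$.

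No genuine conceptual obstacle arises. The only step requiring real attention is the routine bookkeeping to write down $\mu$ and bound $|E|$ in each of the six inequalities; morally the lemma is simply the observation that a large $N+B$ swamps a bounded error term, made especially clean by the rigidity definition forcing the $\varepsilon_i, \delta_i$ to be shared constants across $N=n-1$ and $N=n$.
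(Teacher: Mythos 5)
Your proposal is correct and follows essentially the same route as the paper: rewrite each inequality as $\mu(N+B) > -E$ with $\mu$ an integer combination of the $m_i,p_i$, then split on $\mu = 0$, $\mu \geq 1$, $\mu \leq -1$ and use $N + B > 4\max(\varepsilon,\delta) + 4 > |E|$ to see the truth value cannot change between $N = n-1$ and $N = n$. The paper carries this out explicitly only for case (1) and asserts the rest follow similarly, exactly as your uniform bookkeeping does.
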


\begin{proof}
The assertion $a_i(N) + a_j(N) + c > b_M(N)$ is equivalent to the assertion
	\begin{align*}
    (m_i + m_j - p_M)(N + B) > \delta_M - \varepsilon_i - \varepsilon_j - c.
    \end{align*}
    
\noindent If $m_i + m_j - p_M = 0$, then this is true for $N = n$ if and only if it is true for $N = n - 1$. If $m_i + m_j - p_M \geq 1$, then we note that
	\begin{align*}
    \left(m_i + m_j - p_M\right)(N + B) &\geq N + B \\
    &\geq 4\max(\varepsilon,\delta)-B+4 \\
    &> \delta_M - \varepsilon_i - \varepsilon_j - c, 
    \end{align*}
    
 \noindent so the assertion is true for $N = n$ and $N = n - 1$. If $m_i + m_j - p_M < 0$, then we have
 	\begin{align*}
    \left|\delta_M - \varepsilon_i - \varepsilon_j - c\right| &< \delta + 2\varepsilon + 2 \\
    &< N + B \\
    &< \left|(m_i + m_j - p_M)(N + B)\right|,
    \end{align*}
    
\noindent hence the assertion is false for $N = n$ and $N = n - 1$.
    
The other five statements follow similarly.
\end{proof}

Lemmas \ref{RecursiveIntervals} and \ref{EndpointBounds} together allow us to prove a powerful statement showing how rigidity of $U(1,n - 1)$ can be used to obtain rigidity of $U(1,n)$.

\begin{lemma}\label{OneElementRigidity}
Suppose $U(1,n - 1)$ is $(M,B,\varepsilon,\delta)$-rigid with coefficients $m_i, p_i$, and $U(1,n)$ is $(M - 1,B,\varepsilon,\delta)$-rigid with coefficients $m_i, p_i$. Then $U(1,n)$ is $(M,B,\varepsilon,\delta)$-rigid with coefficients $m_i, p_i$.
\end{lemma}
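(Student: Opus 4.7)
The plan is to determine both endpoints $a_M(n)$ and $b_M(n)$ of the $M$-th interval of $U(1,n)$ and verify they satisfy the linear formulas with the same coefficients $m_M, p_M, \varepsilon_M, \delta_M$ already known for $U(1,n-1)$. First, I would apply Lemma \ref{RecursiveIntervals} to the known value $a_M(n-1)$, writing it as $e_j(n-1) + e_k(n-1) + c$ for some $1 \leq j \leq k < M$, $|c| \leq 2$, where each of $e_j, e_k$ denotes the appropriate left or right endpoint of its interval. Expanding both sides via the $(M,B,\varepsilon,\delta)$-rigidity of $U(1,n-1)$ yields an identity
\begin{align*}
(n-1+B) m_M + \varepsilon_M = (n-1+B)(c_j + c_k) + f_j + f_k + c,
\end{align*}
where $(c_i, f_i) \in \{(m_i,\varepsilon_i),(p_i,\delta_i)\}$ is determined by the endpoint choice. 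The growth condition $n > 4\max(\varepsilon,\delta) - B + 4$ forces $n-1+B$ to dominate the $\varepsilon, \delta$ scale, pinning down $m_M = c_j + c_k$ and $\varepsilon_M = f_j + f_k + c$ exactly. Using the $(M-1)$-rigid structure of $U(1,n)$, the candidate $\hat a_M(n) := e_j(n) + e_k(n) + c$ then automatically satisfies $\hat a_M(n) = (n+B) m_M + \varepsilon_M$.

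The core of the argument is to show $a_M(n) = \hat a_M(n)$, which reduces to verifying that (a) $\hat a_M(n)$ admits a unique representation as a sum of two distinct elements of $U(1,n)$, and (b) no integer $x \in (b_{M-1}(n), \hat a_M(n))$ admits a unique representation. For both, every relevant representation $x = u + v$ must have $u, v$ drawn from the first $M-1$ intervals of $U(1,n)$, whose structure is explicit. I would establish a correspondence between representations of $a_M(n-1) - s$ in $U(1,n-1)$ and representations of $\hat a_M(n) - s$ in $U(1,n)$ for each $s \geq 0$: a pair $(u,v) \in A_i(n-1) \times A_j(n-1)$ summing to $a_M(n-1) - s$ should match a pair in $A_i(n) \times A_j(n)$ summing to $\hat a_M(n) - s$, since each $A_i(n)$ is a widened translate of $A_i(n-1)$ and Lemma \ref{EndpointBounds} guarantees that every relevant endpoint-sum comparison is preserved. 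Since $a_M(n-1)$ has a unique representation in $U(1,n-1)$ while smaller values in its gap do not, the same transfers to $U(1,n)$.

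The symmetric argument handles $b_M(n)$. The analog of Lemma \ref{RecursiveIntervals} for right endpoints, proved by the same case analysis but invoking $b_M + 1 \notin U(1,n)$ in place of $a_M - 1 \notin U(1,n)$, gives $b_M(n-1) = e_{j'}(n-1) + e_{k'}(n-1) + c'$; this extracts $p_M, \delta_M$, and the representation-counting correspondence above verifies $b_M(n) = e_{j'}(n) + e_{k'}(n) + c'$ with the inherited bound $|\delta_M| < \delta$.

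The main obstacle is making the representation correspondence cleanly rigorous: $A_i(n)$ is not a uniform translate of $A_i(n-1)$, since its left endpoint shifts by $m_i$ while its right endpoint shifts by $p_i$, so $A_i(n) \times A_j(n)$ is a strictly enlarged rectangle compared to any translate of $A_i(n-1) \times A_j(n-1)$. One must partition each such product into \emph{corner} regions according to which ends the summands sit near, use Lemma \ref{EndpointBounds} to verify that the same corner contributes to the same $s$-offset across $n$ and $n-1$, and check that the extra lattice points introduced by the widening contribute only to sums $x \geq \hat a_M(n)$, outside the gap of interest. The growth condition $n > 4\max(\varepsilon,\delta)-B+4$ is precisely the quantitative buffer that separates these corner regions and ensures that every governing endpoint-sum inequality survives the transition from $n - 1$ to $n$ unchanged.
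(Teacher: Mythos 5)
Your skeleton matches the paper's: use Lemma \ref{RecursiveIntervals} to write $a_M(n-1)$ as an endpoint-sum plus a constant $c$ with $|c|\leq 2$, use the growth condition to extract $m_M,\varepsilon_M$, define the candidate linear function, and use Lemma \ref{EndpointBounds} to move inequalities between $n-1$ and $n$. The uniqueness half of your step (a) is also fine and is essentially what the paper does: a second representation of the candidate $\hat a_M(n)$ forces $\hat a_M(n)\in A_k(n)+A_l(n)$, i.e.\ $a_k(n)+a_l(n)-1<\hat a_M(n)<b_k(n)+b_l(n)+1$, and Lemma \ref{EndpointBounds} transfers this to $n-1$, contradicting $\hat a_M(n-1)\in U(1,n-1)$.

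The genuine gap is your step (b) and, downstream, your treatment of $b_M$. The per-offset correspondence between representations of $a_M(n-1)-s$ and of $\hat a_M(n)-s$ does not exist: the intervals $A_i$ stretch non-uniformly (left endpoint by $m_i$, right by $p_i$), the gaps $(b_{M-1}(N),a_M(N))$ generally have different lengths at $N=n-1$ and $N=n$, and for offsets $s$ where the gaps disagree your map sends gap integers at level $n$ to integers at level $n-1$ lying inside $A_{M-1}(n-1)$ --- which are uniquely representable, so the correspondence would wrongly certify the corresponding level-$n$ integers as Ulam numbers. The ``corner region'' repair you sketch is exactly the hard part, and you do not carry it out. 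The paper sidesteps the issue entirely: to rule out $a_M(n)<\hat a_M(n)$ it applies Lemma \ref{RecursiveIntervals} to the hypothetical smaller $a_M(n)$ itself, which, being a left endpoint, is necessarily an endpoint-sum plus a bounded constant; the only comparison ever needed is then between two endpoint-sums, which is precisely what Lemma \ref{EndpointBounds} transfers, and no statement about arbitrary integers in the gap is required. For the right endpoint the paper also reverses your direction of transfer: it characterizes $b_M(n)+1$ at level $n$ as the left corner $a_i(n)+a_j(n)$ (or $2a_i(n)+1$, $2a_i(n)+2$) of some sumset, defines the corresponding linear function $y(t)$, and proves $y(n-1)=b_M(n-1)$, so that the known coefficients of $b_M(n-1)$ determine $y(n)=b_M(n)$ --- again only endpoint-sum comparisons appear, never a census of representations across a whole interval. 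You should restructure both halves this way rather than trying to make the offset correspondence rigorous.
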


\begin{proof}
By Lemma \ref{RecursiveIntervals}, we know that we can find indices $1 \leq i\leq j < M$ and integer $c$ such that $a_M(n - 1) = a_i(n - 1) + a_j(n - 1) + c$ or $a_M(n - 1) = b_i(n - 1) + b_j(n - 1) + c$, and $|c| \leq 2$. Define
	\begin{align*}
    x(t) &= \begin{cases} a_i(t) + a_j(t) + c & \text{if } a_M(n - 1) = a_i(n - 1) + a_j(n - 1) + c \\ b_it(t) + a_j(t) + c & \text{if } a_M(n - 1) = b_i(n - 1) + b_j(n - 1) + c \end{cases}.
    \end{align*}
    
\noindent We seek to prove that $x(n) = a_M(n)$. First, note that $x(n - 1) > b_{M - 1}(n - 1)$, so by Lemma \ref{EndpointBounds} we know that $x(n) > b_{M - 1}(n)$.

Suppose that $a_M(n) < x(n)$. By Lemma \ref{RecursiveIntervals}, we could then find indices $1 \leq i\leq j < M$ such that
	\begin{align*}
    a_M(n) = \begin{cases} a_k(n) + a_l(n) + c \\ b_k(n) + b_l(n) + d \end{cases},
    \end{align*}
    
\noindent and so by Lemma \ref{EndpointBounds} we would conclude $a_M(n - 1) < x(n - 1)$, which is a contradiction. Therefore, $a_M(n) \geq x(n)$, and it shall suffice to prove that $x(n) \in U(1,n)$.

By its construction, it is evident $x(n)$ has at least one representation. We need to show that it doesn't have any others. This will happen only if there are indices $1 \leq k \leq l < M$ such that $(i,j) \neq (k,l)$ and $x(n) \in A_k(n) + A_l(n)$, or equivalently
	\begin{align*}
    a_k(n) + a_l(n) - 1 < x(n) < b_k(n) + b_l(n) + 1.
    \end{align*}
    
\noindent However, this is impossible, as it would imply by Lemma \ref{EndpointBounds} that
	\begin{align*}
    a_k(n - 1) + a_l(n - 1) - 1 < x(n - 1) < b_k(n - 1) + b_l(n - 1) + 1,
    \end{align*}
    
\noindent contradicting the fact that $x(n - 1) \in U(1,n - 1)$. Ergo, $x(n) \in U(1,n)$.

It remains to show that $b_M(n) = (N + B)p_M + \delta_M$. We know $b_M(n) + 1 \notin U(1,n)$, which means that there must be $1 \leq i \leq j < M$ such that $b_M(n) + 1 \in A_i(n) + A_j(n)$. However, since $b_M(n) \notin A_i(n) + A_j(n)$, it must be that
	\begin{align*}
    b_M(n) + 1 = \begin{cases} a_i(n) + a_j(n) & \text{if } i \neq j \\ \begin{cases} 2a_i(n) + 1 \\ 2a_i(n) + 2 \end{cases} & \text{if } i = j \end{cases}.
    \end{align*}
    
\noindent With this in mind, define a linear function
	\begin{align*}
    y(t) = \begin{cases} a_i(t) + a_j(t) - 1 & \text{if } b_M(n) = a_i(n) + a_j(n) - 1 \\ 2a_i(t) & \text{if } b_M(n) = 2a_i(n) \\  2a_i(t) + 1 & \text{if } b_M(n) = 2a_i(n) + 1 \end{cases}.
    \end{align*}
    
\noindent We wish to prove that $y(n - 1) = b_M(n - 1)$. First, we note that certainly $y(n - 1) \in U(1,n - 1)$---by construction, it has at least one representation, and it cannot have a second representation without $y(n)$ have a corresponding representation.

Furthermore, $y(n - 1) \geq b_M(n - 1)$---since $y(n) + 1 \in A_i(n) + A_j(n)$, it must be that $y(n - 1) + 1 \in A_i(n - 1) + A_j(n - 1)$, which shows $y(n - 1) \notin U(1, n - 1)$. Notice that if $y(n - 1) > b_M(n - 1)$, then in fact $y(n - 1) > b_M(n - 1) + 1$.

However, $b_M(n - 1) + 1 \notin U(1,n - 1)$, hence there must exist $1 \leq j \leq k < M$ such that $b_M(n - 1) + 1 \in A_j(n - 1) + A_k(n - 1)$, and in fact, by the same argument as above, we must have
	\begin{align*}
    b_M(n - 1) + 1 = \begin{cases} a_k(n - 1) + a_l(n - 1) & \text{if } i \neq j \\ \begin{cases} 2a_k(n - 1) + 1 \\ 2a_l(n - 1) + 2 \end{cases} & \text{if } i = j \end{cases}.
    \end{align*}
    
\noindent Ergo, by Lemma \ref{EndpointBounds}, since $b_M(n) + 1 \nless y(n)$, it follows that $b_M(n - 1) + 1 \nless y(n - 1)$. We conclude that $y(n - 1) = b_M(n - 1)$, as desired.
\end{proof}

The proof of Theorem \ref{Rigidity for a=1} now falls out immediately.

\begin{proof}[Proof of Theorem \ref{Rigidity for a=1}]
We are given that $U(1,N_0)$ is $(M,B,\varepsilon,\delta)$-rigid. Inducting on $N$, we can assume that $U(1,n - 1)$ is $(M,B,\varepsilon,\delta)$-rigid, and it suffices to prove that $U(1,n)$ is $(M,B,\varepsilon,\delta)$-rigid.

We prove inductively that $U(1,n)$ is $(M',B,\varepsilon,\delta)$ for all $1 \leq M' \leq M$. It is clear that $U(1,n)$ is $(2,B,\varepsilon,\delta)$-rigid. By Lemma \ref{OneElementRigidity}, we know that if $U(1,n)$ is $(M' - 1,B,\varepsilon,\delta)$-rigid, then it is $(M',B,\varepsilon,\delta)$-rigid. This concludes the proof.
\end{proof}

\section{Applications of the Rigidity Conjecture:}\label{Applications}
We now give two applications of the conjectures \ref{Rigidity Conjecture} and \ref{NumericalRigidity}. In both cases, while we can prove some partial results unconditionally, we get further information if we can assume something about the rigidity of Ulam sequences.

\subsection{Regular Ulam Sequences} It was proved by Finch \cite{finch_1991, finch_1992_1, finch_1992_2} that if a $(2,1)$-Ulam set contains finitely many even terms, then it is \emph{regular}---that is, the differences between consecutive terms are eventually periodic. It is conjectured that a $(2,1)$-Ulam set $U(a,b)$ with $a<b$ coprime contains finitely many even terms if and only if

	\begin{enumerate}
		\item $a = 2$, $b \geq 5$,
        \item $a = 4$,
        \item $a = 5$, $b = 6$, or
        \item $a \geq 6$ and $a$ or $b$ is even.
	\end{enumerate}
    
\noindent Schmerl and Spiegel \cite{schmerl_spiegel_1994} proved the $a = 2$, $b \geq 5$ case; Cassaigne and Finch \cite{cassaigne_finch_1995} proved the case where $a = 4$, $b \equiv 1 \mod 4$. Our goal is to give a general proof technique for demonstrating that a given Ulam sequence has finitely many even terms, and therefore regular. Let $1_{U(a,b)}$ be the indicator function of $U(a,b)$. Given a positive integer $l$ and a positive odd number $k$, define
	\begin{align*}
    S^l_{a,b}(k) = \left\{1_{U(a,b)}(k + 2i)\right\}_{i = 0}^{l - 2}.
    \end{align*}

\begin{theorem}\label{Regular Theorem}
Let $l, a, b$ be positive integers, and $p < q$ be positive odd integers such that $q \geq 2l$, $a < b < 2l - 2$, $S^l_{a,b}(p) = S^l_{a,b}(q)$, and
	\begin{align*}
    U(a,b) \cap 2\ZZ \cap [2l, 3q - p] = \emptyset.
    \end{align*}
    
\noindent Then
	\begin{align*}
    U(a,b) \cap 2\ZZ \cap [2l,\infty) = \emptyset.
    \end{align*}
\end{theorem}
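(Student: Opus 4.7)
The plan is a strong induction on $n$ establishing two statements simultaneously, where $d := q - p$ is a positive even integer: (P1) for every odd $m$ with $p \leq m \leq n$, $1_{U(a,b)}(m) = 1_{U(a,b)}(m + d)$; and (P2) for every even $m$ with $2l \leq m \leq n$, $m \notin U(a,b)$. The hypothesis $S^l_{a,b}(p) = S^l_{a,b}(q)$ supplies (P1) on $\{p, p+2, \ldots, p + 2(l-2)\}$, and $U(a,b) \cap 2\ZZ \cap [2l, 3q - p] = \emptyset$ supplies (P2) on $[2l, 3q - p]$; these form the base of the induction.

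For the inductive step with $n$ odd and $n > p + 2(l - 2)$: by (P2) strictly below $n$, every even Ulam number less than $n$ lies in the fixed finite set $E := U(a,b) \cap 2\ZZ \cap [1, 2l - 1]$, so each such $e \leq 2l - 2$. Since $n \geq p + 2l - 2$, we have $n - e \geq p$ for every $e \in E$, and (P1) applied inductively to each $n - e$ (necessarily odd, less than $n$, at least $p$) gives $1_{U(a,b)}(n - e) = 1_{U(a,b)}(n + d - e)$. As $n$ is odd, the decompositions of $n$ as $e + o$ with $e \in E$ and $o$ an odd Ulam number are in bijection with those of $n + d$, and this count determines Ulam membership, yielding (P1) at $n$.

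For the inductive step with $n$ even and $n > 3q - p$, I would take $n$ minimal with $n \in U(a,b)$ and derive a contradiction. No decomposition $n = u + v$ can have both $u, v$ even, since then $u, v \in E$ would force $u + v \leq 4l - 4 < 2q + 1 \leq n$. So the unique decomposition is $n = o_1 + o_2$ with $o_1 < o_2$ both odd Ulam, and $o_2 > n/2 > q$ ensures (P1) applies at $o_2$, giving $o_2 \pm d \in U(a,b)$. If $o_2 - o_1 = d$, then $n = 2o_1 + d > 2p + 3d$ forces $o_1 > q$, so (P1) applies at $o_1$ also, and $(o_1 - d, o_2 + d)$ is a distinct second decomposition of $n$. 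Otherwise $(o_1, o_2 - d)$ is a valid decomposition of $n - d > 2q$; by minimality (or the base case if $n - d \leq 3q - p$), $n - d \notin U(a,b)$, so it admits a second decomposition $(u, v)$ with $u < v$ odd Ulam distinct from $(o_1, o_2 - d)$, and since $v > (n - d)/2 > q$ we get $v + d \in U(a,b)$ by (P1), making $(u, v + d)$ a second decomposition of $n$ distinct from $(o_1, o_2)$. Either way, uniqueness of the decomposition of $n$ is contradicted.

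The main delicate point is verifying that the shifted pairs $(o_1 - d, o_2 + d)$ and $(u, v + d)$ are genuinely distinct from $(o_1, o_2)$ as unordered pairs; this is a short case check using $u < v$ together with $(u, v) \neq (o_1, o_2 - d)$, and depends crucially on the quantitative bound $n > 3q - p$ to rule out degenerate alignments. The hypotheses $q \geq 2l$ and $b < 2l - 2$ ensure throughout that the set $E$, the $S^l$ window, and the interval $[2l, 3q - p]$ are wide enough for every shift and inductive invocation to stay within its valid range.
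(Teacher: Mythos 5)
Your proposal follows essentially the same route as the paper: your statement (P1) is precisely the content of Lemma~\ref{Regular Lemma} (the shift-by-$(q-p)$ correspondence on odd Ulam numbers, proved by the same counting of even-plus-odd decompositions), and your even step is the paper's proof of Theorem~\ref{Regular Theorem} (a minimal-counterexample argument that manufactures a second representation of the first offending even Ulam number). One point needs repair: as literally stated, your simultaneous strong induction is not well-founded, because establishing (P1) at an odd $n$ requires knowing that every even part of a decomposition of $n+d$ lies in $E$, i.e.\ (P2) up to $n+d-1$, which exceeds the inductive hypothesis (P2) up to $n-1$; an even Ulam number in $[n+1,\,n+d-1]$ would break the claimed bijection. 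The paper avoids this by first fixing $\tilde u$, the smallest even Ulam number exceeding $3q-p$ (assumed to exist for contradiction), so that the analogue of (P2) is known outright for all even numbers below $\tilde u$, and then proving the correspondence only for odd $m$ with $m+(q-p)<\tilde u$ --- which is all your even step actually uses, since there $o_2+d<n$ (because $o_1>q>d$ in the degenerate subcase) and $v+d\le n-1$ in the generic one. With that reordering your argument goes through. In one respect you are more careful than the paper: you explicitly treat the case $o_2-o_1=q-p$, where the downward-shifted pair collapses to a sum of two equal terms; the paper exhibits $u_1+(u_2-q+p)$ as a representation of $\tilde u-q+p$ without checking that the two summands are distinct.
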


\begin{corollary}\label{Regular Sequences}
For integer pairs $(a,b)$ given below, $U(a,b)$ is regular.
	\begin{align*}
    \begin{array}{lllllll}
    (4, 11) & (4, 19) & (6, 7) & (6, 11) & (7, 8) & (7, 10) & (7, 12) \\
    (7, 16) & (7, 18) & (7, 20) & (8, 9) & (8, 11) & (9, 10) & (9, 14) \\
    (9, 16) & (9, 20) & (10, 11) & (10, 13) & (10, 17) & (11, 12) & (11, 14) \\
    (11, 16) & (11, 18) & (11, 20) & (12, 13) & (12, 17) & (13, 14)
    \end{array}
    \end{align*}
\end{corollary}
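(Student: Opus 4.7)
The plan is to reduce the corollary to a pair-by-pair application of Theorem \ref{Regular Theorem}, combined with the cited theorem of Finch stating that a $(2,1)$-Ulam set with only finitely many even terms is regular. So it suffices to prove, for each of the $27$ listed pairs $(a,b)$, that $U(a,b)$ contains only finitely many even integers.

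For each fixed pair $(a,b)$, I would first compute an initial segment of $U(a,b)$ (out to a length to be determined) and locate the largest even term $2e_{a,b}$ that appears within this segment. I would then choose $l \geq \max\{e_{a,b}+1,\lceil (b+3)/2\rceil\}$ so that both $a,b < 2l-2$ and the evenness-free window condition $U(a,b)\cap 2\ZZ\cap[2l,\cdot]=\emptyset$ has a chance of being satisfied. With $l$ fixed, I would scan odd integers $q \geq 2l$, searching for odd $p<q$ with $S^l_{a,b}(p)=S^l_{a,b}(q)$; that is, I look for two length-$(l-1)$ windows of the odd-indexed indicator pattern of $U(a,b)$ that agree. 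Since there are only $2^{l-1}$ possible patterns, the pigeonhole principle guarantees such a collision as soon as the scan is long enough. Once a candidate $(l,p,q)$ is found, I would verify the remaining hypothesis $U(a,b)\cap 2\ZZ\cap[2l,3q-p]=\emptyset$ by direct computation of $U(a,b)$ up to $3q-p$. If this holds, Theorem \ref{Regular Theorem} immediately gives $U(a,b)\cap 2\ZZ\cap[2l,\infty)=\emptyset$, so $U(a,b)$ has only finitely many even elements, and Finch's theorem yields regularity.

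To keep the proof self-contained and reproducible, I would present the data as a table listing, for each of the $27$ pairs $(a,b)$, the witnessing triple $(l,p,q)$ together with the computed value $\max\{2e_{a,b}\}$, and a short remark noting that the hypotheses of Theorem \ref{Regular Theorem} have been verified by computation up to index $3q-p$. The actual verification is a finite calculation for each pair; I would describe the algorithm (naively: generate $U(a,b)$ incrementally by testing, for each candidate $n$, whether it has a unique representation as a sum of two distinct prior elements) and cite a computer algebra computation.

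The main obstacle is purely computational: for pairs such as $(13,14)$ or $(11,20)$, the necessary witnesses $(l,p,q)$ may occur only after a long initial segment, and both the search for matching windows and the subsequent verification of the even-free interval up to $3q-p$ can become costly. There is also no a priori guarantee that such witnesses exist for every pair on the list — conceivably a listed pair could require a very long search, or could (in principle) fail to produce a matching window within feasible range. In practice I expect this not to happen for the $27$ listed pairs, because the numerical data reported elsewhere in the paper strongly suggests each such $U(a,b)$ stops producing evens very early; the task reduces to recording the explicit witness data, which is the bulk of the work.
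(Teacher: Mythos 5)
Your proposal matches the paper's proof: the paper likewise verifies the corollary by exhibiting, for each of the $27$ pairs, an explicit computed triple $(l,p,q)$ satisfying the hypotheses of Theorem \ref{Regular Theorem} (tabulated in the proof) and then invoking Finch's result that finitely many even terms implies regularity. Your additional remarks about pigeonhole and computational cost are fine but do not change the argument.
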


\begin{proof}
By direct computation, we find triples $(l, p, q)$ satisfying the conditions of Theorem \ref{Regular Theorem}.
	
\begin{minipage}[t]{.5\textwidth}
	\begin{align*}
    	\begin{array}{l|l}
        (a,b) & (l, p, q) \\ \hline
        (4, 11) & (25, 107, 1425) \\
		(4, 19) & (41, 14745, 17305) \\
		(6, 7) & (57,  8537,  70987) \\
		(6, 11) & (89,  1032425,  1033833) \\
		(7, 8) & (71,  14331,  57089) \\
		(7, 10) & (85,  95587,  102181) \\
		(7, 12) & (99,  79423,  80991) \\
		(7, 16) & (127,  46957,  47965) \\
		(7, 18) & (141,  196513,  198753) \\
		(7, 20) & (155,  50893,  52125) \\
		(8, 9) & (91,  1037093,  1038533) \\
		(8, 11) & (111,  2125501,  4308725) \\
		(9, 10) & (109,  117117,  747935) \\
        (9, 14) & (145,  558073,  560377)
        \end{array}
	\end{align*}
\end{minipage}%
\begin{minipage}[t]{0.5\textwidth}
	\begin{align*}
    	\begin{array}{l|lcl|l}
        (a,b) & (l, p, q) \\ \hline
        (9, 16) & (163,  60093,  65277) \\
		(9, 20) & (199,  219761,  222929) \\
		(10, 11) & (133,  470303,  485615) \\
		(10, 13) & (157,  5804601,  5807097) \\
		(10, 17) & (205,  3919981,  3933037) \\
		(11, 12) & (155,  140511,  142975) \\
		(11, 14) & (177,  507965,  509373) \\
		(11, 16) & (199,  394379,  400715) \\
		(11, 18) & (221,  29995,  37035) \\
		(11, 20) & (243,  46291,  54035) \\
		(12, 13) & (183,  3329465,  3330921) \\
		(12, 17) & (239,  3204117,  3211733) \\
		(13, 14) & (209,  1421023,  1427679)
        \end{array}
    \end{align*}
\end{minipage}

\end{proof}

The drawback of Theorem \ref{Regular Theorem} is that it requires separate computations for every Ulam sequence $U(a,b)$, and so for example it is insufficient to prove that $U(4,b)$ is regular for every $b \equiv -1 \mod 4$. Conjecture \ref{Rigidity Conjecture} strengthens Theorem \ref{Regular Theorem} considerably, as it implies that if we can find integers $p,q,l$ satisfying the conditions of the theorem for a sufficiently large $b \equiv -1 \mod 4$, then  every sequence $U(4,b')$ with $b' \equiv - 1 \mod 4$ and $b' \geq b$ has only finitely many even terms.

To prove Theorem \ref{Regular Theorem}, we start with a useful lemma that establishes that if it is false, then there is a bijective correspondence between odd Ulam numbers in different intervals.

\begin{lemma}\label{Regular Lemma}
Let $l, a, b$ be positive integers, and $p < q$ be positive odd integers such that $q \geq 2l$, $a < b < 2l - 2$, $S^l_{a,b}(p) = S^l_{a,b}(q)$,
	\begin{align*}
    U(a,b) \cap 2\ZZ \cap [2l, 3q - p] = \emptyset
    \end{align*}
    
\noindent and
	\begin{align*}
    U(a,b) \cap 2\ZZ \cap [2l,\infty) \neq \emptyset.
    \end{align*}
    
\noindent Let $\tilde{u}$ be the smallest even number in $U(a,b)$ greater than $3q - p$. Then there is a well-defined bijection
	\begin{align*}
    U(a,b) \cap (1 + 2\ZZ) \cap [p, \tilde{u} + p - q - 1] &\rightarrow U(a,b) \cap (1 + 2\ZZ) \cap [q, \tilde{u} - 1] \\
    u &\mapsto u + q - p.
    \end{align*}
\end{lemma}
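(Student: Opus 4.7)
The plan is to reduce the lemma to proving, by strong induction, that membership in $U(a,b)$ is invariant under the translation $\tau : u \mapsto u + q - p$ on odd integers in the stated interval.

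First I would dispose of the structural pieces. Since $q - p$ is even, $\tau$ preserves parity, and since $p < q$ it carries $[p, \tilde{u} + p - q - 1]$ bijectively onto $[q, \tilde{u} - 1]$; injectivity is automatic, and surjectivity onto the target set follows by running the argument for the inverse translation. Hence the lemma reduces to the single claim: for every odd $u \in [p, \tilde{u} + p - q - 1]$, one has $u \in U(a,b)$ if and only if $\tau(u) \in U(a,b)$.

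I would prove this by strong induction on $u$. The base case covers the $l-1$ odd values $u \in \{p, p+2, \ldots, p + 2(l-2)\}$, where the equivalence is exactly $S^l_{a,b}(p) = S^l_{a,b}(q)$. For the inductive step, take odd $u \geq p + 2l - 2$, so $u > 2l - 2 > b$, and likewise $\tau(u) > b$. Because $u$ is odd, any representation $u = x + y$ as a sum of two distinct Ulam numbers has exactly one even summand $e$; because $u \leq \tilde{u} + p - q - 1 < \tilde{u}$, this even Ulam number satisfies $e < \tilde{u}$, hence $e \leq 3q - p$. Combined with the hypothesis $U(a,b) \cap 2\ZZ \cap [2l, 3q - p] = \emptyset$, this forces $e$ into the fixed finite set
\begin{align*}
E := U(a,b) \cap 2\ZZ \cap [1, 2l - 1].
\end{align*}
The same localization applies to $\tau(u)$ since $\tau(u) \leq \tilde{u} - 1$. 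Setting $R(m) := |\{e \in E : m - e \in U(a,b)\}|$, this quantity counts the unordered representations of any odd $m < \tilde{u}$ with $m > b$ as sums of two distinct Ulam numbers.

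For each $e \in E$ one has $e \leq 2l - 2$, so $u - e$ is an odd integer lying in $[p, u - 2]$, within the range of the inductive hypothesis; by induction $u - e \in U(a,b)$ iff $\tau(u) - e = (u - e) + (q - p) \in U(a,b)$. Summing over $e \in E$ gives $R(u) = R(\tau(u))$. Since $u, \tau(u) > b$, Lemma \ref{FirstOrderDescription} implies $u \in U(a,b)$ iff $R(u) = 1$, and likewise for $\tau(u)$, completing the induction. The main obstacle I anticipate is justifying the localization of the even summand into the finite set $E$: this is where the hypotheses $q \geq 2l$, $a < b < 2l - 2$, and the emptiness of $U(a,b) \cap 2\ZZ \cap [2l, 3q - p]$ all need to cooperate with the definition of $\tilde{u}$ to control both $u$ and $\tau(u)$ simultaneously. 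Once that localization is secured, the base case and the representation-counting argument glue together mechanically.
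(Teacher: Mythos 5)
Your proposal is correct and follows essentially the same route as the paper's proof: induct along the interval, use $S^l_{a,b}(p)=S^l_{a,b}(q)$ as the base case, and in the inductive step match representations of $p+2h$ and $q+2h$ by observing that the even summand must lie in the fixed finite set of even Ulam numbers below $2l$ while the odd summand is handled by the inductive hypothesis (the paper packages this as a bijection $\psi$ between the two sets of representations rather than your counting function $R$). If anything, your justification that the even summand localizes to $E$ and that $u-e$ lands back in $[p,u-2]$ is spelled out more explicitly than in the paper.
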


\begin{proof}
We will show that there is a well-defined bijection
	\begin{align*}
    \phi_m: U(a,b) \cap (1 + 2\ZZ) \cap [p, p + 2m] &\rightarrow U(a,b) \cap (1 + 2\ZZ) \cap [q, q + 2m] \\
    u &\mapsto u + q - p,
    \end{align*}
    
\noindent for all integers $0 \leq m \leq \frac{\tilde{u} - q - 1}{2}$. We know that $S^l_{a,b}(p) = S^l_{a,b}(q)$, hence $p + 2m' \in U(a,b)$ if and only if $q + 2m' \in U(a,b)$ for all $0 \leq m' \leq l - 2$, which proves the claim for $m \leq l - 2$.

For all other $m$, we apply induction---that is, let $l - 2 < h \leq \frac{\tilde{u} - q - 1}{2}$ such that $\phi_{h - 1}$ is a bijection. We need to show that $\phi_h$ is bijection. This is equivalent to proving that $p + 2h \in U(a,b)$ if and only if $q + 2h \in U(a,b)$. Define sets
	\begin{align*}
    P &= \left\{(u,v) \in U(a,b)^2 \middle|u \equiv 0 \mod 2, \  v \equiv 1 \mod 2, \ u + v = p + 2h\right\} \\
    Q &= \left\{(u,v) \in U(a,b)^2 \middle|u \equiv 0 \mod 2, \ v \equiv 1 \mod 2, \ u + v = q + 2h\right\},
    \end{align*}
    
\noindent which enumerate the number of representations of $p + 2h$ and $q + 2h$, respectively. If we can show that $|P| = |Q|$, then this will imply that $p + 2h \in U(a,b)$ if and only if $q + 2h \in U(a,b)$. However, we can construct a bijection between these two sets by
\begin{align*}
\psi: P &\rightarrow Q\\
(u,v) &\mapsto \left(u, \varphi_{h - 1}(v)\right) \\
	&= (u, v + q - p).
\end{align*}

\noindent This is well-defined since $u + v = p + 2h$ implies $v \leq p + 2h - 1$.
\end{proof}

\begin{proof}[Proof of Theorem \ref{Regular Theorem}]
We argue by contradiction. That is, suppose that there exist even Ulam numbers larger than $3q - p$. Let $\tilde{u}$ be the smallest such element. We know $\tilde{u} = u_1 + u_2$ for some $u_1 < u_2 \in U(a,b)$. Every even Ulam number less than $\tilde{u}$ is smaller than $2l$, hence one of $u_1, u_2$ is odd---otherwise, we have
	\begin{align*}
    u_1 + u_2 &< 4l \leq 3q - p,
    \end{align*}
    
\noindent which is a contradiction. Since $\tilde{u}$ is even, we conclude that $u_1, u_2$ are both odd. Next, we show that $\tilde{u} - q + p$ has at least two representations as the sum of two distinct elements of $U(a,b)$. Note that
	\begin{align*}
	\tilde{u} - q + p &\geq (3q - p) - q + p = 2q > 2l,
    \end{align*}
    
\noindent and since $\tilde{u} - q + p$ is even, this implies it is not in $U(a,b)$. Consequently, it will suffice to prove that it has at least one representation. Note that
	\begin{align*}
    u_2 &> \frac{\tilde{u}}{2} > \frac{3q - p}{2} > q \\
    u_2 &\leq \tilde{u} - 1,
    \end{align*}

\noindent so by Lemma \ref{Regular Lemma}, since $u_2 \in U(a,b)$ it follows $u_2 + q - p \in U(a,b)$. Therefore, $\tilde{u} + q - p = u_1 + (u_2 + q - p)$ is a representation.
    
Write
	\begin{align*}
    \tilde{u} - q + p = v_1 + v_2 = v_1' + v_2',
    \end{align*}
    
\noindent where $v_1 < v_2, v_1' < v_2' \in U(a,b)$. Note that $v_2 > q$, since
	\begin{align*}
    v_2 &> \frac{\tilde{u} - q + p}{2} > \frac{(3q - p) - q + p}{2} > q.
    \end{align*}
    
\noindent Similarly, $v_2' > q$. From this it follows that $v_2, v_2' > 2l$, and we conclude that $v_2, v_2'$ must be odd. Finally, note that
	\begin{align*}
    p < q < v_2, v_2' \leq \tilde{u} + p - q - 1,
    \end{align*}
    
\noindent and therefore by Lemma \ref{Regular Lemma}, $v_2 + q - p, v_2' + q - p \in U(a,b)$, which is a contradiction since
	\begin{align*}
    \tilde{u} &= v_1 + (v_2 + q - p) \\ &= v_1' + (v_2' + q - p).
    \end{align*}
    
\noindent This concludes the proof.
\end{proof}

\subsection{Density in the $U(1,n)$ Sequence} Knowing the asymptotic structure of $U(1,n)$ gives insight into the density of $U(1,n)$ for large $n$. To see this, let us assume that Conjecture \ref{NumericalRigidity} and define
	\begin{align*}
    \delta_M(n) &= \frac{\left|\bigsqcup_{i = 1}^M [a_i(n), b_i(n)] \cap \ZZ\right|}{b_M(n)} \\
    &= \sum_{i = 1}^M \frac{(n + B)(p_i - m_i) + (\delta_i - \varepsilon_i)}{(n + B)p_M + \delta_M},
    \end{align*}
    
\noindent which is the density of $U(1,n)$, truncated to the first $M$ intervals. Letting $\delta(n)$ be the density of $U(1,n)$, we see that
	\begin{align*}
    \lim_{n \rightarrow \infty} \delta(n) &= \lim_{n \rightarrow \infty}\lim_{M \rightarrow \infty} \delta_M(n) \\
    &= \lim_{M \rightarrow \infty}\lim_{n \rightarrow \infty} \delta_M(n) \\
    &= \lim_{M \rightarrow \infty}\sum_{i = 1}^M \frac{p_i - m_i}{p_M}.
    \end{align*}

\noindent Note that $p_i - m_i$ is either $0$ or $1$---this is, for example, a consequence of Lemma \ref{sieve2}, which appears later in this section. Therefore, we get an upper bound on the asymptotic density.
	\begin{align*}
    \lim\limits_{n\to\infty}\delta(n)\leq\lim\limits_{M\to\infty}\frac{M}{p_M}.
    \end{align*}

\noindent Without appealing Conjecture \ref{NumericalRigidity}, it is still possible to give an explicit upper bound on the density of $U(1,n)$ by studying its structure. Specifically, we seek to prove the following theorem.

\begin{theorem}\label{upper bound}
Let $n \geq 2$ and let $I$ be a set of $3n$ consecutive positive integers greater than $2n + 2$. Then $|I\cap U(1,n)|\leq n + 1$.
\end{theorem}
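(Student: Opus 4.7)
The plan is to argue by contradiction. Let $I = [a, a + 3n - 1]$ with $a > 2n + 2$, set $S = U(1,n) \cap I$, and suppose $|S| \geq n + 2$. Since each $u \in S$ satisfies $u > 2n + 2$, it admits a unique representation $u = v + w$ with $v < w$ and $v, w \in U(1,n)$. The central tool is the block $\{n, n+1, \ldots, 2n\} \subset U(1,n)$ of $n+1$ consecutive Ulam numbers. Using this I would first establish a \emph{backward-window sieve}: for every $u \in U(1,n)$ with $u > 4n$, the interval $W(u) := [u - 2n,\, u - n]$ contains at most one element of $U(1,n)$. Indeed, distinct $w_1, w_2 \in W(u) \cap U(1,n)$ would yield two representations $u = (u - w_i) + w_i$ with $u - w_i \in \{n, \ldots, 2n\}$; these are distinct as unordered pairs whenever $w_1 + w_2 \neq u$, and for $w_1, w_2 \in [u-2n, u-n]$ this sum exceeds $u$ once $u > 4n$, while the degenerate case $u = 2w$ is ruled out since $u/2 \notin [u-2n, u-n]$ in that range.

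Assume first that $a > 4n$, so the sieve applies to every $u \in S$. I would define $\pi \colon S \to U(1,n) \cup \{\star\}$ by setting $\pi(u)$ equal to the unique element of $W(u) \cap U(1,n)$ when this intersection is nonempty, and $\pi(u) = \star$ otherwise. Then $\pi(u) \neq \star$ iff the small part $v$ of $u$'s representation lies in $\{n, \ldots, 2n\}$, in which case $\pi(u) = u - v$. The next step is to analyze collisions: if $\pi(u_1) = \pi(u_2) = w$ with $u_1 < u_2$ in $S$, necessarily $u_2 - u_1 \leq n$. The cases $u_2 - u_1 = 1$ and $u_2 - u_1 = n$ each produce a second representation of $u_2$ (via $(1, u_1)$ or $(n, u_1)$, respectively), contradicting uniqueness---the degenerate $w = n$ possibility is killed by $u_1 > 4n$. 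For $u_2 - u_1 = k \in \{2, \ldots, n-1\}$, a collision additionally forces $w \pm k \notin U(1,n)$, substantially constraining when collisions can occur.

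With the sieve and collision analysis in hand, I would count pairs $(u, w) \in S \times U(1,n)$ with $u - w \in [n, 2n]$: the sieve bounds this count by $|S|$, while rewriting by summing over $w$ gives $\sum_{w} |S \cap [w + n,\, w + 2n]| \leq |S|$. Combining this with the near-injectivity of $\pi$ on its $U(1,n)$-valued fibre and a separate bound on $|\pi^{-1}(\star)|$ (whose elements have representations with small part $1$, $2n+2$, or an Ulam number larger than $2n+2$) should give $|S| \leq n+1$. The main obstacle lies precisely here: naive pigeonhole gives only the weaker bound $|S| \leq 2(n+1)$, so the collision analysis must be pushed carefully to sharpen this to $n+1$. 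The residual case $a \leq 4n$ is handled by the explicit description $U(1,n) \cap [1, 6n] = \{1\} \cup \{n, \ldots, 2n\} \cup \{2n+2\} \cup \{4n\} \cup \{4n+2, \ldots, 5n-1\} \cup \{5n+1\}$ (for $n \geq 4$) from Section~\ref{Special Rigidity Section}, which shows $|U(1,n) \cap (2n+2, 6n]| = n$; the sporadic cases $n \in \{2, 3\}$ are dispatched by direct computation.
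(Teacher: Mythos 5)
Your backward-window sieve is sound and is essentially the contrapositive of the paper's Lemma \ref{sieve1} (two Ulam numbers $w_1,w_2$ at distance at most $n$ force every integer in $[w_2+n,\,w_1+2n]$ out of $U(1,n)$, because $\{n,\dots,2n\}\subset U(1,n)$). But the proof has a genuine gap exactly where you flag it: the passage from the sieve to the sharp count $n+1$ is never carried out. Your double count of pairs $(u,w)$ with $u-w\in[n,2n]$ gives $\sum_w |S\cap[w+n,w+2n]|\le |S|$, which is an upper bound on a quantity you would need to bound \emph{from below} to constrain $|S|$; and the collision analysis for $\pi$ only excludes fibre-mates at distances $1$ and $n$, leaving fibres of size up to roughly $n/2$ a priori (the condition $w\pm k\notin U(1,n)$ is a real constraint but you do not show it forces near-injectivity). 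As you concede, what you actually establish is on the order of $2(n+1)$. The paper closes this gap differently: it fixes the smallest $a\in I\cap U(1,n)$ and does a structural case analysis on $U(1,n)\cap[a,a+n-1]$ (whether it contains a run of two consecutive elements or only isolated points), then applies the \emph{forward} form of the sieve to exhibit one long explicit Ulam-free subinterval of $I$ --- namely $[a+n+k_1+1,\,a+2n+l_m-1]$ in the first case and $[a+c_2+n,\,a+c_{t-1}+2n]$ in the second --- and directly counts the at most $n+1$ survivors in the two remaining fringes. Some such structural input about how the elements of $S\cap[a,a+n-1]$ are spaced seems unavoidable; the fibre-counting framework alone does not supply it.

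There is also a secondary defect in your residual case. For $2n+2<a\le 4n$ the window $I$ extends up to $a+3n-1\le 7n-1$, so the explicit description of $U(1,n)\cap[1,6n]$ does not cover all of $I$; moreover that description is stated in the paper only for $n\ge 4$ and is itself derived from Theorem \ref{Rigidity for a=1}, i.e.\ it is not freely available as an unconditional computation for all $n$. The paper avoids any such case split: Lemma \ref{sieve1} needs no lower bound like $u>4n$ because it is phrased from the perspective of the pair $w_1,w_2$ rather than of the target $u$, so the same argument runs uniformly for every $a>2n+2$. If you want to salvage your architecture, you should reformulate the sieve in the forward direction and replace the fibre count with an explicit Ulam-free interval inside $[a+n,\,a+3n-1]$.
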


As an immediate corollary of this theorem, we obtain an upper bound on the density.

\begin{corollary}\label{ExplicitUpperBound}
$\delta(n) \leq \frac{n+1}{3n}$.
\end{corollary}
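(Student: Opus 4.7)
The plan exploits the fact that $[n, 2n] \subset U(1,n)$ supplies $n+1$ consecutive Ulam numbers near the start of the sequence. The key sieve observation I would establish first (presumably the content of Lemma \ref{sieve2} referenced just before the theorem) is that for any $u \in U(1,n)$ with $u > 2n+2$, the interval $[u-2n, u-n]$ contains at most one element of $U(1,n)$. This is immediate from uniqueness of representations: any $v \in [u-2n, u-n] \cap U(1,n)$ together with $u-v \in [n, 2n] \subset U(1,n)$ yields a representation $u = v + (u-v)$, and distinct choices of $v$ would produce distinct unordered pairs summing to $u$.

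For the boundary case where $I \subseteq (2n+2, 6n]$, I would use the explicit description
\[
U(1,n) \cap [1, 6n] = \{1\} \cup [n, 2n] \cup \{2n+2\} \cup \{4n\} \cup [4n+2, 5n-1] \cup \{5n+1\}
\]
stated in the introduction (and obtained as part of Theorem \ref{Rigidity for a=1}). The portion lying in $(2n+2, 6n]$ consists of exactly $n$ elements, so every window of $3n$ consecutive integers in this range meets $U(1,n)$ in at most $n \leq n+1$ elements.

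For the main case $k > 4n+2$, I would proceed by double counting the set $P = \{(v,u) : v,u \in U(1,n),\ u \in I,\ u-v \in [n,2n]\}$. The sieve lemma applied at each $u \in I \cap U(1,n)$ gives the upper bound $|P| \leq |I \cap U(1,n)|$. For the lower bound, classify each $u \in I \cap U(1,n)$ by its unique representation $u = a + b$ with $a < b$: either $a \in [n, 2n]$ (in which case $(b,u) \in P$), or $a \in \{1, 2n+2\}$, or both $a, b > 2n+2$. Each non-first-block possibility imposes its own exclusion on the location of other Ulams in $I$ (for example, $u = 1 + (u-1)$ forces $[u-2n, u-n] \cap U(1,n) = \emptyset$ by uniqueness), and a careful case analysis of how many Ulams can simultaneously be of these exceptional types yields $|I \cap U(1,n)| \leq n+1$.

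The main obstacle will be sharpening the counting. A naive pigeonhole on the pairwise differences $u_j - u_i$ in $I$, combined with the sieve lemma (which forbids more than one $u_i$ per $u_j$ with $u_j - u_i \in [n, 2n]$), only yields $|I \cap U(1,n)| \leq 2n$, since the complementary difference range $[1, n-1] \cup [2n+1, 3n-1]$ has $2n-2$ values. Obtaining the sharp bound $n+1$ requires deploying the sieve constraint at every Ulam of $I$ simultaneously and carefully tracking how their exclusion zones overlap within the three blocks of $n$ integers that partition $I$; this interplay between local sieve exclusions and global packing is where the essential combinatorial work lies.
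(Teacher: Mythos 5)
The corollary you are asked to prove is, in the paper, a one-line consequence of Theorem \ref{upper bound}: partition the integers above $2n+2$ into consecutive blocks of length $3n$, apply the bound $|I\cap U(1,n)|\leq n+1$ to each block, and let the number of blocks tend to infinity. Your proposal never states this reduction explicitly and instead tries to re-derive the window bound itself, and that is where it breaks down in two places. First, your ``sieve observation'' is false as stated: for $n\geq 4$ take $u=4n\in U(1,n)$; then $[u-2n,u-n]=[2n,3n]$ contains the two Ulam numbers $2n$ and $2n+2$. The argument fails because the pair $\{v,u-v\}$ need not consist of distinct elements (here $v=2n$ gives $u-v=v$, which is not a legal representation), and because two values $v$ and $v'=u-v$ in the interval produce the same unordered pair. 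The sieve the paper actually uses (Lemma \ref{sieve1}) runs in the opposite direction: if $a,a+k\in U(1,n)$ with $1\leq k\leq n$, then the entire block $[a+k+n,a+2n]$ is excluded from $U(1,n)$, since each of its elements admits two representations with a summand in $[n,2n]$.

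Second, and more seriously, you concede that your double count of the set $P$ only yields $|I\cap U(1,n)|\leq 2n$, and that the ``essential combinatorial work'' of improving this to $n+1$ remains to be done. That improvement is the entire content of Theorem \ref{upper bound}; the paper obtains it by letting $a$ be the first Ulam number in $I$, splitting into two cases according to whether $[a,a+n-1]$ contains two consecutive Ulam numbers, and in each case using Lemmas \ref{sieve1} and \ref{sieve2} to excise an explicit run of length at least $n$ from the middle third of $I$ before bounding the two surviving ends against each other. As written, your proposal identifies the difficulty but does not resolve it, so the proof is incomplete. If you are permitted to quote Theorem \ref{upper bound}, all that is needed is the partition-and-limit step; if not, the case analysis above is the missing core of the argument.
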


\begin{proof}
Partition the first $k$ integers greater than $2n + 2$ into runs of $3n$ consecutive integers. Each such partition contains at most $n + 1$ terms of $U(1,n)$. The proportion of Ulam numbers less than or equal to $k$ is then no bigger than
	\begin{align*}
    \frac{(n + 1)(\frac{k}{3n} + 1) + 2n + 2}{k} = \frac{n + 1}{3n}\left(1 + \frac{1}{k}\right) + \frac{2n + 2}{k}.
    \end{align*}
    
\noindent In the limit, we get the desired upper bound.
\end{proof}

It should be noted that this is likely not a tight upper bound---asymptotically,
\begin{align*}
\frac{n+1}{3n}\approx\frac{1}{3},
\end{align*}

\noindent but numerical data for $n \geq 4$ suggests that the actual density is $\approx 1/6$. We will give an improvement on this upper bound for the special case $U(1,2)$ at the end of this section.

Before we prove Theorem \ref{upper bound}, we give a couple useful lemmas. First, we note that the statement of Lemma \ref{Initial Segment}, which only implies a statement for Ulam sequences $U(1,n)$ with $n$ sufficiently large, can be made completely explicit in this case.

\begin{lemma}\label{Initial Segment Explicit}
Let $n \geq 2$. The first three intervals of $U(1,n)$ are $\{1\}$, $[n,2n]\cap \ZZ$, and $\{2n+2\}$.
\end{lemma}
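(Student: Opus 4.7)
The plan is to unroll the defining recursion of $U(1,n)$ by hand, exploiting the fact that for $n \geq 2$ the seed pair $\{1, n\}$ is spread apart widely enough that only one nontrivial sum decomposition is available at each step up to $2n$. I would establish the conclusion through four direct verifications.

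\emph{The first interval is $\{1\}$.} By definition $1, n \in U(1,n)$, and no element of $\{2, \ldots, n-1\}$ can lie in $U(1,n)$, since at the moment such a candidate would be tested the only prior Ulam numbers are $1$ and possibly $n$, which admit no sums of distinct prior terms that small.

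\emph{The second interval contains $[n, 2n] \cap \ZZ$.} I would proceed by induction on $k \in \{1, \ldots, n\}$, showing that once $\{1, n, n+1, \ldots, n+k-1\}$ have been confirmed as the Ulam numbers up to $n+k-1$, the next Ulam number is $n+k$, produced by the representation $n + k = 1 + (n+k-1)$. The key step is uniqueness: any other decomposition $n+k = u+v$ with $u < v$ both in the current Ulam set would force $u \geq n$ and hence $u + v \geq n + (n+1) = 2n+1$, contradicting $k \leq n$. So $n+k \in U(1,n)$ and the pattern extends to $k = n$.

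\emph{The third interval is $\{2n+2\}$.} The integer $2n+1$ fails uniqueness because it admits the two distinct representations $1 + 2n$ and $n + (n+1)$, whose summands were all produced in the previous step. For $2n+2$ the only candidates are $1 + (2n+1)$, $n + (n+2)$, and $(n+1)+(n+1)$; the first is excluded since $2n+1 \notin U(1,n)$, the third is not a sum of distinct summands, and the middle one succeeds uniquely, so $2n+2 \in U(1,n)$. Finally $2n+3 \notin U(1,n)$ because $1 + (2n+2)$ and $(n+1) + (n+2)$ are two distinct valid representations for every $n \geq 2$, which closes the third interval at $2n+2$.

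There is no real obstacle here; the whole argument is arithmetic book-keeping on sums of small, widely separated Ulam numbers, and the only mildly delicate ingredient is the inductive inequality $u+v \geq 2n+1$ underlying the uniqueness in the second step.
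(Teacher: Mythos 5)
Your proof is correct and follows essentially the same direct-verification route as the paper's: the unique representation $(n+i-1)+1$ for each element of $[n,2n]$, the double representation of $2n+1$, the uniqueness of $n+(n+2)$ for $2n+2$, and two representations of $2n+3$. (Your choice of $(n+1)+(n+2)$ as the second representation of $2n+3$ is in fact valid for all $n \geq 2$, whereas the paper's $n+(n+3)$ requires $n \geq 3$, so your bookkeeping is marginally more careful.)
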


\begin{proof}
Clearly, all elements of the form $n+i$ for $1 \leq i \leq n$ have the unique Ulam representation $n+i = (n+i-1) + 1$. However, $2n+1 \notin U(1,n)$, because it has a second Ulam representation $n + (n+1)$. Finally, $2n+2 = n + (n+2)$, which is its only Ulam representation, and $2n + 3 \notin U(1,n)$ since $2n + 3 = (2n + 2) + 1 = n + (n + 3)$.
\end{proof}

\begin{lemma}\label{sieve1}
If $a,a + k \in U(1,n)$ for some $1\leq k\leq n$, then $[a + k + n, a + 2n] \cap \ZZ \subset \ZZ \ \backslash \ U(1,n)$.
\end{lemma}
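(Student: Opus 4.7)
The plan is to show that every integer $m \in [a+k+n, a+2n]$ admits two distinct representations as a sum of two distinct prior Ulam numbers, which by the defining property of the Ulam sequence forces $m \notin U(1,n)$. The two natural candidates are $m = a + (m-a)$ and $m = (a+k) + (m-a-k)$, and I would check both are legitimate Ulam decompositions.

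The bounds $a+k+n \leq m \leq a+2n$ together with $1 \leq k \leq n$ place the complementary summands $m - a \in [k+n, 2n]$ and $m - a - k \in [n, 2n-k]$, both of which sit inside $[n, 2n]$. Lemma \ref{Initial Segment Explicit} identifies $[n, 2n] \cap \ZZ$ as a subset of $U(1,n)$, so every summand in each decomposition is itself an Ulam number. Because $k \geq 1$ the leading summands $a$ and $a+k$ differ, so the unordered pairs $\{a, m-a\}$ and $\{a+k, m-a-k\}$ genuinely differ as well, producing the two representations that certify $m \notin U(1,n)$.

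The step requiring additional care is verifying that each decomposition uses two \emph{distinct} summands and that the two decompositions do not accidentally coincide as unordered pairs; these conditions fail only at the isolated points $m \in \{2a,\ 2a+k,\ 2(a+k)\}$. At each such exceptional $m$, one of $a$ or $a+k$ is forced into the fully Ulam block $[n, 2n]$, so I would handle those values by perturbing, writing $m = (a \pm 1) + (m - a \mp 1)$ or $m = (a+k \pm 1) + (m - a - k \mp 1)$ and using Lemma \ref{Initial Segment Explicit} to ensure the shifted summands remain in $[n, 2n] \subset U(1,n)$, which restores two valid distinct representations. I expect this degenerate-case bookkeeping to be the principal obstacle, since the shift of $\pm 1$ must still land inside the all-Ulam interval even when $a$ or $a+k$ sits near its boundary, forcing a small number of boundary subcases to be treated by hand.
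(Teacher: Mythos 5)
Your main argument is exactly the paper's proof: write $m = a + (m-a) = (a+k) + (m-a-k)$, note that $m-a \in [k+n,2n]$ and $m-a-k \in [n,2n-k]$ both lie in $[n,2n] \subset U(1,n)$ by Lemma \ref{Initial Segment Explicit}, and conclude that $m$ has two representations. The only place you diverge is in flagging the degenerate values $m \in \{2a,\ 2a+k,\ 2(a+k)\}$, which the paper's proof silently ignores. You are right that these are a genuine issue, but your perturbation fix cannot work, because at such points the lemma as literally stated is \emph{false}: take $n = 2$, $a = 1$, $k = 1$, so that $a, a+k \in U(1,2)$ and the claimed excluded interval is $[4,5]$, yet $4 = 1+3$ is the unique representation of $4$ (the pair $2+2$ is disallowed) and $4 \in U(1,2)$. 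No rewriting of the summands can manufacture a second representation that does not exist. The correct observation is that each of the three coincidences forces $a \leq 2n$ (for instance $2a \leq a+2n$ gives $a \leq 2n$, and $m = 2a+k$ gives $a \leq 2n-k$), whereas every invocation of this lemma in the paper has $a > 2n+2$; under the hypothesis $a > 2n$ all three degeneracies are impossible, the two unordered pairs are automatically distinct pairs of distinct elements, and your (and the paper's) two-line argument is complete. So the honest repair is to add that hypothesis, not to perturb the summands.
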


\begin{proof}
Every integer in this interval is of the form $a+k+n+i$ for $0\leq i\leq n - k$, hence it has at least two Ulam representations: $(a+k)+(n+i)$ and $a + (n+k+i)$, where we have used the fact that $n + i, n + k + i \in [n, 2n]$, and hence are in the Ulam sequence by Lemma \ref{Initial Segment Explicit}.
\end{proof}

\begin{lemma}\label{sieve2}
Let $1 \leq k \leq n$. If $[a,a + k] \cap \ZZ \subset U(1,n)$, then $[a + n + 1, a + k + 2n - 1] \cap \ZZ \subset \ZZ \ \backslash \ U(1,n)$.
\end{lemma}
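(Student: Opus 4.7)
The strategy is to exhibit, for each integer $m$ in $[a+n+1, a+k+2n-1]$, two distinct representations of $m$ as a sum of distinct Ulam numbers; this forces $m \notin U(1,n)$, paralleling the argument of Lemma \ref{sieve1}.

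Writing $m = a + n + 1 + j$ for $0 \le j \le k + n - 2$, I invoke the hypothesis $[a, a+k] \cap \ZZ \subset U(1,n)$ together with Lemma \ref{Initial Segment Explicit} to conclude that $a + i \in U(1,n)$ for every $i \in [0, k]$ and $n + l \in U(1,n)$ for every $l \in [0, n]$. The equation $(a + i) + (n + l) = m$ is equivalent to $i + l = j + 1$, whose solutions are parametrized by $i \in [\max(0, j+1-n), \min(k, j+1)]$ with $l = j + 1 - i$. A direct count shows this range contains at least two values of $i$ throughout $0 \le j \le k + n - 2$, and three or more values in the interior of that range.

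For consecutive choices $i$ and $i+1$, the unordered Ulam pairs $\{a+i, n+j+1-i\}$ and $\{a+i+1, n+j-i\}$ collapse to the same pair only when $a + i = n + j - i$, and a representation is invalid (equal summands) only when $a + i = n + j + 1 - i$. Each of these degenerate conditions pins down at most one value of $i$, so whenever the range contains three or more indices I can select two that yield distinct valid decompositions, concluding the argument for the interior of $j$.

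The main obstacle lies in the boundary cases where only two choices of $i$ exist, namely $j = 0$, $j = k + n - 2$, and the entire range when $k = 1$. For such $j$, if the two candidate decompositions happen to coincide as unordered pairs or fail the distinctness of summands, I would produce a second decomposition by alternative means: the representation $m = 1 + (m - 1)$ (valid when $m - 1 \in U(1,n)$, using $1 \in U(1,n)$), or representations invoking the isolated Ulam number $2n + 2$ supplied by Lemma \ref{Initial Segment Explicit}. Verifying that some such alternate representation always exists in these finitely many degenerate configurations is the chief technical content beyond the main parametric count, and I expect to handle it by a short case analysis keyed to the relative position of $a$ with respect to $n$ and $2n$.
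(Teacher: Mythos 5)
Your parametric count does produce at least two candidate decompositions $(a+i)+(n+l)$ with $i+l=j+1$ for every $m$ in the target interval, but the proof stops exactly where you yourself locate the real work: the ``short case analysis'' for the degenerate configurations is never carried out, and it cannot be carried out as the lemma is stated, because those configurations contain actual counterexamples. Take $a=n$ and $k=1$: the hypothesis holds since $[n,n+1]\cap\ZZ\subset U(1,n)$ by Lemma \ref{Initial Segment Explicit}, and the conclusion would force $[2n+1,3n]\cap U(1,n)=\emptyset$, yet $2n+2\in U(1,n)$ by that same lemma. At $m=2n+2$ (so $j=1$) your two candidates are $n+(n+2)$ and $(n+1)+(n+1)$; the second has equal summands, only one valid representation survives, and no rescue exists --- in particular your fallback $m=1+(m-1)$ presupposes $m-1\in U(1,n)$, which is exactly the kind of fact you do not have (and here $2n+1\notin U(1,n)$). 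The lemma is true, and is only ever invoked (inside Theorem \ref{upper bound}), when $a>2n+2$; in that regime $a+i>2n\geq n+l$ for all relevant $i,l$, so every candidate pair has distinct summands, distinct $i$ give distinct unordered pairs, and your entire case analysis evaporates. The fix is to add (or at least record) that hypothesis, not to hunt for alternate representations.

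For comparison, the paper's proof is a two-line reduction: write $[a,a+k]\cap\ZZ$ as the union of the consecutive pairs $[a+i,a+i+1]\cap\ZZ$ for $0\leq i\leq k-1$ and apply Lemma \ref{sieve1} to each, obtaining excluded intervals $[a+i+n+1,\,a+i+2n]$ whose union is $[a+n+1,\,a+k+2n-1]$. Your direct argument essentially re-derives the counting inside Lemma \ref{sieve1} instead of citing it; that is legitimate, but it inherits the same unstated largeness assumption on $a$ --- the paper's proof of Lemma \ref{sieve1} also silently assumes its two exhibited representations are distinct and have distinct summands, which likewise fails for $a\leq 2n$.
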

\begin{proof}
We partition
\begin{align*}
[a,a+k] \cap \ZZ = \bigcup\limits_{i=0}^{k-1} [a+i, a+i+1] \cap \ZZ,
\end{align*}

\noindent and so it suffices to prove the claim with $k = 1$, which is an immediate corollary of Lemma \ref{sieve1}.
\end{proof}

Lemma \ref{sieve2} shows that if there are long runs of consecutive elements in the Ulam sequence, then there must be longer run of consecutive elements later on that do not belong to the Ulam sequence. With this observation in hand, we proceed to the proof of Theorem \ref{upper bound}.

\begin{proof}[Proof of Theorem \ref{upper bound}]
If $I \cap U(1,n) = \emptyset$, we are done. Otherwise, let $a > 2n + 2$ be the smallest element in $I \cap U(1,n)$. There are two cases: either $[a, a + n - 1]$ contains at least two consecutive elements $u, u + 1 \in U(1,n)$, or it does not. We consider these cases separately.

\begin{case} \ 

Since we are given that $[a, a+n-1]\cap U(1,n)$ contains at least two consecutive elements, we can partition it into disjoint intervals 
	\begin{align*}
    [a, a+ n - 1] \cap U(1,n) &= \bigsqcup_{i = 1}^m [a + k_i, a + l_i] \cap \ZZ \\
    &= \bigsqcup_{j = 1}^t \{a + c_j\}
    \end{align*}
    
\noindent such that $k_i \leq l_i + 1 < k_{i+1}$, $c_j + 1 < c_{j + 1}$, and for no $i,j$ is $c_j \in [k_i - 1, l_i + 1]$. By Lemma \ref{sieve2}, $[a+n+k_i+1, a+l_i+2n-1] \subset \ZZ \ \backslash \ U(1,n)$ for $1\leq i\leq m$. Note that since $k_m \leq n - 1$ and $l_1 \geq 1$, we have $a+n+k_m+1 \leq a+l_1+2n-1$, and hence
	\begin{align*}\bigcup_{i=1}^m [a+n+k_i+1, a+l_i+2n-1] \cap \ZZ &= [a+n+k_1+1, a+2n+l_m-1] \cap \ZZ \\ &\subset \ZZ \ \backslash \ U(1,n).
    \end{align*}
    
\noindent Therefore,
	\begin{align*}
    I \cap U(1,n) \subset &\left([a, a + n + k_1] \cap \ZZ\right) \\
    \cup &\left([a + 2n + l_m, a + 3n - 1] \cap \ZZ\right).
    \end{align*}

\noindent However, we claim that
	\begin{align*}
    \left|[a + 2n + l_m, a + 3n - 1] \cap U(1,n)\right| + \left|[a + l_m, a + n - 1] \cap U(1,n)\right| \leq n - l_m.
    \end{align*}
    
\noindent It suffices to prove this assuming that $[a + l_m, a + n - 1] \cap U(1,n) \neq \emptyset$---let $u_1, u_2, \ldots u_s$ be the Ulam numbers in $[a + l_m, a + n - 1]$. If $s = 1$, then we note that
	\begin{align*}
    a + 2n + l_m = (a + l_m) + 2n = u_1 + (2n - (u_1 - a - l_m)),
    \end{align*}
    
\noindent and as this gives two representations, it must be that $a + 2n + l_m \notin U(1,n)$. If $s > 1$, note that for every $1 \leq i < j \leq s$, by Lemma \ref{sieve1},
	\begin{align*}
    [u_j + n, u_i + 2n] \cap \ZZ \subset \ZZ \ \backslash \ U(1,n),
	\end{align*}
    
\noindent hence
	\begin{align*}
    [a + 2n + l_m, u_{s - 1} + 2n] \cap \ZZ \subset \ZZ \ \backslash \ U(1,n).
    \end{align*}
    
 \noindent Note that
 	\begin{align*}
    \left|[a + 2n + l_m, u_{s - 1} + 2n] \cap \ZZ\right| \geq s
	\end{align*}
 
\noindent unless $u_{s - 1} = a + l_m + s - 1$, which is to say that $[a + l_m, a + l_m + s - 1] \subset U(1,n)$. But by the definition of $l_m$, it can only be that $a + l_m \in U(1,n)$ if $l_m = n - 1$, which is not possible since we assumed that there are at least two Ulam numbers in $[a + l_m, a + n - 1]$. As desired, we conclude that
	\begin{align*}
    \left|[a + l_m, a + n - 1] \cap U(1,n)\right| + \left|[a + 2n + l_m, a+ 3n - 1] \cap U(1,n)\right|  \leq n - l_m,
    \end{align*}
    
\noindent and therefore
	\begin{align*}
    \left|I \cap U(1,n)\right| &\leq \left|[a + l_m, a + n - 1] \cap U(1,n)\right| \\
    &+ \left|[a + n, a + n + k_1] \cap U(1,n)\right| \\
    &+ \left|[a + 2n + l_m, a+ 3n - 1] \cap U(1,n)\right| \\
    &\leq n - l_m + k_1 - 1 \\
    &\leq n - 1.
    \end{align*}
\end{case}

\begin{case} \

In this case, we are given that 
	\begin{align*}
    [a, a+ n - 1] \cap U(1,n) &= \bigsqcup_{j = 1}^t \{a + c_j\}
    \end{align*}
    
\noindent where $c_j + 1 < c_{j + 1}$. This implies that for $k > j$,
	\begin{align*}
    k - j < c_k - c_j < n.
    \end{align*}
    
\noindent By Lemma \ref{sieve1}, we have
	\begin{align*}
    [a + c_k + n, a + c_j + 2n] \cap \ZZ \subset \ZZ \ \backslash \ U(1,n),
    \end{align*}
    
\noindent and consequently,
	\begin{align*}
    [a + c_2 + n, a + c_{t - 1} + 2n] \cap \ZZ &= \bigcup_{1 \leq i < j \leq t}[a + c_k + n, a + c_j + 2n] \cap \ZZ \\
    &\subset \ZZ \ \backslash \ U(1,n).
    \end{align*}
    
\noindent Ergo,
	\begin{align*}
    \left|I \cap U(1,n)\right| &= \left|[a,a + n - 1] \cap U(1,n)\right| \\
    &+ \left|[a + n, a + c_2 + n - 1]\cap U(1,n)\right| \\
    &+ \left|[a + c_2 + n, a + c_{t - 1} + 2n]\cap U(1,n)\right| \\
    &+ \left|[a + c_{t - 1} + 2n + 1, a + 3n - 1]\cap U(1,n)\right| \\
    &\leq t + c_2 + n - c_{t - 1} - 1 \\
    &\leq n + 1.
    \end{align*}
\end{case}

\noindent This concludes the proof.
\end{proof}

For $n = 2$, Corollary \ref{ExplicitUpperBound} gives an upper bound of $\frac{1}{2}$ on the density. Using somewhat different techniques to the proof of Theorem \ref{upper bound}, we can improve this upper bound to $6/17\approx 0.353$.

\begin{theorem}
The density of $U(1,2)$ is at most $6/17$.
\end{theorem}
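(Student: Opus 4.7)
The plan is to sharpen the window bound from Theorem \ref{upper bound} in the specific case $n=2$. Since that theorem gives at most $n+1 = 3$ Ulam numbers in every $3n = 6$ consecutive integers (density $\leq 1/2$), the goal is to establish a tighter window bound---ideally showing that every window of $17$ consecutive integers past some explicit threshold contains at most $6$ Ulam numbers---and then to conclude the density bound by the same partitioning argument used in Corollary \ref{ExplicitUpperBound}.

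First, I would strengthen and iterate Lemmas \ref{sieve1} and \ref{sieve2} for $n=2$. These already assert that a pair of Ulam numbers at distance $1$ forces two subsequent non-Ulam positions, a pair at distance $2$ forces one subsequent non-Ulam position, and three consecutive Ulam numbers force three subsequent non-Ulam positions. I would chain these constraints with each other, and with the uniqueness of the Ulam representation of each Ulam number, to rule out dense local configurations. Then I would carry out a case analysis of $|U(1,2) \cap [a, a+16]|$ based on the gap pattern between consecutive Ulam numbers in the window (in particular, on how many gaps of size $1$ and $2$ appear), applying the strengthened sieving at each step to derive a contradiction whenever the count reaches $7$.

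The main obstacle I expect is that the basic sieving lemmas alone are insufficient: by direct enumeration, a pattern of Ulam numbers at relative positions $0, 1, 5, 6, 10, 12, 13$ in a window of length $17$ is consistent with every consequence of Lemmas \ref{sieve1} and \ref{sieve2}, so some extra input is needed to exclude such configurations. I anticipate this extra input will come from exploiting uniqueness of representations---linking the decompositions of nearby Ulam numbers such as $a, a+1$ or $a+1, a+2$, which typically force one of their addends to coincide and hence produce additional constraints---or else from working over a longer window where these constraints aggregate to yield the same $6/17$ average. Once the correct window bound is established, the density conclusion is immediate by the partition argument of Corollary \ref{ExplicitUpperBound}.
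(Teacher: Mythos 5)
Your high-level strategy (a local window bound followed by the partition argument of Corollary \ref{ExplicitUpperBound}) matches the paper's, and you correctly diagnose that Lemmas \ref{sieve1} and \ref{sieve2} alone cannot force the bound and that uniqueness of representations must supply the missing constraints. But the proposal defers exactly the step where the proof lives, and the mechanism you are missing is concrete and simple: since $1,2,3,4,6,8,11,13,16 \in U(1,2)$, whenever $x < y$ are both Ulam numbers in the window and $y - x$ is itself a small Ulam number with $y - x \neq x$, then $y = x + (y - x)$ is a representation of $y$; two such representations disqualify $y$. Applied to a window $I = [a, a+8]$ with $a \in U(1,2)$, this immediately shows that if $\left|I \cap U(1,2)\right| > 3$ then the intersection must be exactly $\{a, a+2, a+5, a+7\}$ (every other four-element configuration hands some element two representations, because the only non-Ulam differences available in $[1,8]$ are $5$ and $7$), and then that $[a, a+16] \cap U(1,2) \subseteq \{a, a+2, a+5, a+7, a+12, a+14\}$. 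This is precisely the ``extra input'' that kills your example pattern $0,1,5,6,10,12,13$: the element at position $6$ acquires the two representations $a + 6$ and $(a+5) + 1$.

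A second, smaller divergence: the paper does not prove (and does not need) your target statement that every window of $17$ consecutive integers beyond a threshold contains at most $6$ Ulam numbers. It proves the disjunction that for each Ulam number $a$, either $\left|[a,a+8] \cap U(1,2)\right| \leq 3$ or $\left|[a,a+16] \cap U(1,2)\right| \leq 6$, and then partitions $\ZZ_{>0}$ adaptively into blocks of length $9$ or $17$ anchored at Ulam numbers, together with gaps containing no Ulam numbers at all; each block has density at most $\max(3/9, 6/17) = 6/17$. Aiming for a uniform $17$-window bound is both stronger than necessary and awkward near the start of the sequence, where $[1,17]$ already contains nine Ulam numbers; the adaptive anchoring at Ulam numbers is what lets the short argument close. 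As written, your plan is a plausible program with the decisive combinatorial step still outstanding.
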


\begin{proof}
Let $a \in U(1,n)$ and define $I = [a, a + 8] \cap \ZZ$, $J = [a, a + 16] \cap \ZZ$. We claim that either $|I \cap U(1,2)| \leq 3$, or $|J \cap U(1,2)| \leq 6$. We make use of the fact that
	\begin{align*}
    1,2,3,4,6,8,11,13,16 \in U(1,2).
    \end{align*}
    
\noindent If $|I \cap U(1,2)| > 3$, then $I = \{a, a + 2, a + 5, a + 7\}$. Otherwise, $I \cap U(1,2)$ contains a pair of elements $u, u + 1$ such that $u + 1 = a + 2, a + 3, a + 4, a + 6$, or $a + 8$, which gives two representations; this is a contradiction.

In this case, $J \cap U(1,2) \subset \{a, a + 2, a + 5, a + 7, a + 12, a + 14\}$---otherwise, it contains an element with two representations. Consequently, $|J \cap U(1,2)| \leq 6$. This means we can now define two sequences $u_1, u_2, u_3, \ldots$, $L_1, L_2, L_3, \ldots$ recursively---let $u_1 = 1$ and $L_1 = 17$, and then define $u_{i + 1}$ to be the smallest element of the Ulam sequence larger than $u_i + L_i$, and
	\begin{align*}
    L_{i + 1} &= \begin{cases} 17 & \text{if } \left|[u_{i + 1}, u_{i + 1} + 16] \cap U(1,2)\right| \leq 6 \\ 9 & \text{otherwise} \end{cases}.
    \end{align*}
    
\noindent We can then partition the positive integers into sets of the form $[u_{i + 1}, u_{i + 1} + L_i]$ and $[u_{i + 1} + L_i + 1, u_{i + 2} - 1]$. The density of $U(1,2)$ in any of these sets is no more than $6/17$, and that implies that the density of $U(1,2)$ is bounded by $6/17$.
\end{proof}

\section{Classification of $(3,2)$-Ulam Sets:}\label{HigherDimensions}

Up until this point, we have only considered $(2,1)$-Ulam sets; we now turn to the problem of classifying higher dimensional Ulam sets. The classification problem for non-degenerate $(2,2)$-Ulam sets was solved by Kravitz and Steinerberger \cite{kravitz_steinerberger_2017}. In particular, they showed that after a linear transformation, the Ulam set becomes $U\left((1,0),(0,1)\right)$, illustrated in Figure $\ref{base lattice}$. We shall denote this set by $\A$.

\begin{figure}
	\begin{tabular}{ccc}
    \includegraphics[height = 0.2\textheight]{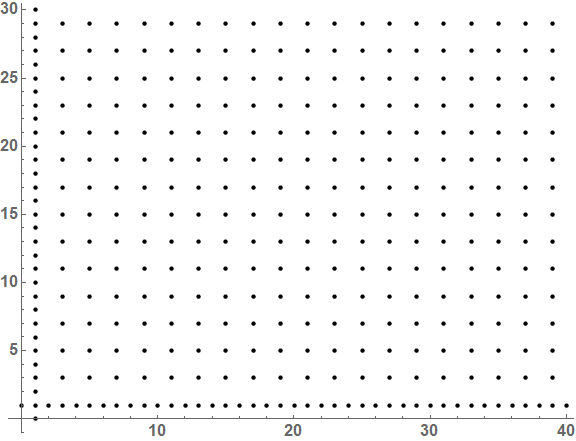} & & \includegraphics[height = 0.2\textheight]{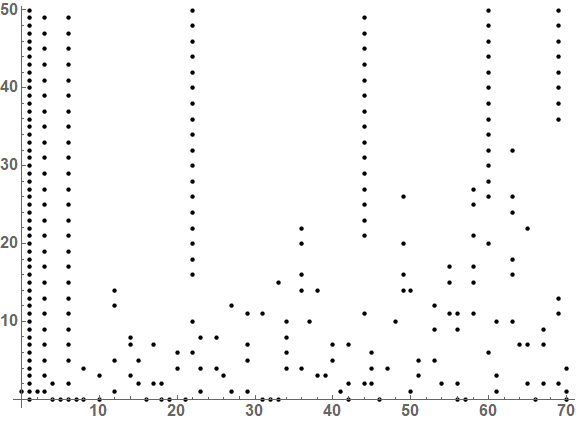}
    \end{tabular}
	\caption{The $(2,2)$-Ulam set $\A$, and the $(3,2)$-Ulam set $U_\A(4,0)$.}
    \label{base lattice}
\end{figure}

We shall consider $(3,2)$-Ulam sets that are extensions of such Ulam sets---that is, we shall assume that two of the basis vectors are $(1,0)$ and $(0,1)$. For convenience, we define
	\begin{align*}
    U_\A(v_1, v_2) &= U\left((1,0),(0,1),(v_1,v_2)\right) \\
    W_{(v_1, v_2)} &= \left\{(m,n) \in \ZZ_{\geq 0}^2 \middle| m < v_1 \text{ or } n < v_2 \right\} \\
    L_{(v_1, v_2)} &= \left\{(m,n) \in \ZZ_{\geq v_1} \times \ZZ_{\geq v_2}\right\}.
    \end{align*}
    
\noindent Note that if $(a,b) \in L_{(v_1, v_2)}$, then any representations it has have to lie in the set $W_{(v_1, v_2)}$. We use this fact to our advantage to prove the following lemma.

\begin{lemma}\label{Basic Structural Lemma}
Let $\U = U_\A(v_1, v_2)$ be a non-degenerate $(3,2)$-Ulam set with $v_1, v_2 \neq 0$. Then the following statements hold.
	\begin{enumerate}
    \item $v_1, v_2 > 1$ and at least one of $v_1, v_2$ is even.
    \item $\A \cap W_{(v_1, v_2)} = \U \cap W_{(v_1, v_2)}$.
    \item Every point $(m,n) \in \ZZ_{\geq 0}^2$ has at least one representation.
    \end{enumerate}
\end{lemma}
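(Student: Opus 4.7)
The plan is to handle each of the three sub-claims in turn, leveraging the known classification of the $(2,2)$-Ulam set $\A$ from Kravitz and Steinerberger. For (1), I would first show $v_1 > 1$ by proving inductively on $n$ that $(1, n) \in \A$ for every $n \geq 1$: since $(0, j) \in \A$ only for $j \leq 1$, the only viable $\A$-representation of $(1, n)$ is $(1, n-1) + (0, 1)$, which is unique; a check that $(1,n)$ is the smallest-norm candidate at the relevant stage concludes the induction. This forces $(1, v_2) \in \A$ whenever $v_1 = 1$, contradicting non-degeneracy, and symmetry gives $v_2 > 1$. For the parity claim, assuming for contradiction that both $v_1$ and $v_2$ are odd (and exceed $1$), I would use the structure of $\A$, together with the parity behavior of $\A$ that will be formalized in the later parity theorem, to exhibit a unique $\A$-representation of $(v_1, v_2)$, concluding that $(v_1, v_2) \in \A$ and violating non-degeneracy.

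For (2), I would argue by strong induction on the norm $|(m, n)|$ over all $(m, n) \in W_{(v_1, v_2)}$, proving the biconditional $(m, n) \in \A \iff (m, n) \in \U$. The crucial observation is that whenever $(m, n) \in W_{(v_1, v_2)}$ and $(m, n) = (a, b) + (c, d)$ with $(a, b), (c, d) \in \ZZ_{\geq 0}^2$, both summands must lie in $W_{(v_1, v_2)}$: if $m < v_1$ then $a, c < v_1$, and analogously when $n < v_2$. In particular, $(v_1, v_2)$ itself never contributes to a representation of a point in $W_{(v_1, v_2)}$, so the Ulam constructions for $\A$ and for $\U$ proceed identically inside $W_{(v_1, v_2)}$.

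For (3), I would split $\ZZ_{\geq 0}^2$ into the two regions $W_{(v_1, v_2)}$ and $L_{(v_1, v_2)}$ (excluding the trivial basepoints $(0,0), (1,0), (0,1)$). For $(m, n) \in W_{(v_1, v_2)}$, part (2) lets me work entirely inside $\A$, and I would read off an explicit representation from the Kravitz-Steinerberger description of $\A$, using for example $(m, n) = (1, 0) + (m-1, n)$ or $(m, n) = (0, 1) + (m, n-1)$ whenever the shifted point is in $\A$, and a one-step descent otherwise. For $(m, n) \in L_{(v_1, v_2)}$, the representation $(m, n) = (v_1, v_2) + (m - v_1, n - v_2)$ works whenever $(m - v_1, n - v_2) \in \U$, and in the remaining cases I would fall back on the axis-shift representations above, which are valid by (2) combined with the structure of $\A$.

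The main obstacle I anticipate is the parity half of (1): producing a self-contained argument that $(v_1, v_2) \in \A$ whenever $v_1$ and $v_2$ are both odd and at least one exceeds $1$, without a circular appeal to the parity theorem proved later in the paper. This appears to require a careful inductive analysis of the ``both-odd'' sublattice of $\A$, showing that at each such point there is always exactly one $\A$-representation and that it is indeed added at the appropriate stage of the Ulam process. The other two parts are mostly bookkeeping once the structural observation about sums staying in $W_{(v_1, v_2)}$ is in place.
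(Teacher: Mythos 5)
Your part (2) is essentially the paper's argument, with the right key observation: a representation of a point with $m < v_1$ (or $n < v_2$) can only use summands satisfying the same inequality, so the summand $(v_1,v_2)$ never participates and the two Ulam processes coincide on $W_{(v_1,v_2)}$. But the other two parts have problems. For (1), the ``main obstacle'' you identify is not an obstacle at all: the Kravitz--Steinerberger classification you say you are leveraging gives $\A$ explicitly as $\left\{(m,1)\right\} \cup \left\{(1,m)\right\} \cup \left\{(2m+1,2n+1) \mid m,n \geq 0\right\}$, so non-degeneracy (i.e.\ $(v_1,v_2) \notin \A$) immediately rules out $v_1 = 1$, $v_2 = 1$, and the case where both coordinates are odd. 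No induction on the odd sublattice and no appeal to the later parity theorem is needed; this is exactly how the paper proves (1), in one line. Re-deriving $(1,n) \in \A$ by induction is likewise redundant once the classification is quoted.

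The genuine gap is in (3). For $(m,n) \in L_{(v_1,v_2)}$ you propose the representation $(v_1,v_2) + (m-v_1,n-v_2)$ ``whenever $(m-v_1,n-v_2) \in \U$,'' and otherwise to fall back on the axis-shift representations $(1,0)+(m-1,n)$ or $(0,1)+(m,n-1)$. Neither fallback is justified: for a point well inside $L_{(v_1,v_2)}$, the shifted points $(m-1,n)$ and $(m,n-1)$ are themselves in $L_{(v_1,v_2)}$, where part (2) says nothing and the structure of $\U$ is precisely what the rest of the section is trying to determine, so you have no guarantee that any of your proposed summands lie in $\U$. The paper avoids this entirely with the single uniform representation $(m,n) = (m-1,1) + (1,n-1)$: both summands lie on the lines $y=1$ and $x=1$, hence belong to $\A$ and, since $v_1,v_2 > 1$ by part (1), to $W_{(v_1,v_2)}$, so part (2) places them in $\U$ no matter where $(m,n)$ sits. (Both your sketch and the paper silently skip the point $(2,2)$, where the two summands coincide and one should use instead, say, $(1,0)+(1,2)$; and claim (3) must be read as applying to points off the coordinate axes, since $(m,0)$ for $m \geq 2$ genuinely has no representation.)
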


\begin{proof}
It was shown in \cite{kravitz_steinerberger_2017} that
	\begin{align*}
    \A = &\left\{(m,1) \middle| m \in \ZZ_{\geq 0}\right\} \cup \left\{(1,m) \middle| m \in \ZZ_{\geq 0}\right\} \\
    \cup &\left\{(2m + 1, 2n + 1) \middle| m,n \in \ZZ_{\geq 0}\right\}.
    \end{align*}
    
\noindent For $\U$ to be non-degenerate, it must be that $(v_1, v_2) \notin \A$, and since $v_1, v_2 \neq 0$, this implies that $v_1, v_2 > 1$ and at least one of $v_1, v_2$ is even.

All representations of points in $W_{(v_1, v_2)}$ are representations by elements in $\U$. It follows $\A \cap W_{(v_1, v_2)} = \U \cap W_{(v_1, v_2)}$.  However, this implies that
	\begin{align*}
    (m, n) = (m - 1, 1) + (1, n - 1)
    \end{align*}
    
 \noindent is a representation of $(m,n)$.
\end{proof}

We shall call $(m, n) = (m - 1, 1) + (1,n - 1)$ the \emph{standard representation} of $(m,n)$. By Lemma \ref{Basic Structural Lemma}, proving that $(m,n) \notin U_\A(v_1, v_2)$ for $v_1, v_2 \neq 0$ is equivalent to proving that it has a nonstandard representation. This makes working with Ulam sets of this form much simpler. On the other hand, if one of $v_1, v_2 = 0$, then the set $U_\A(v_1, v_2)$ has a copy of a $(2,1)$-Ulam set on either the $x$- or $y$-axis. An example of such a set is given in Figure \ref{base lattice}. Some partial results about such sets are given in \cite{kravitz_steinerberger_2017}, but in general describing their structure is an open problem.

We now give five examples of possible structures of sets $U_\A(v_1, v_2)$ with $v_1, v_2 \neq 0$, which are derived from numerical observations. An illustration of each of these five types is provided in Figure \ref{UlamTypes}.

\begin{definition}
Let $U \subset \ZZ_{\geq 0}^2$ and let $(v_1, v_2)$ be a vector in $U$. We say $U$ is of $L$ \emph{type for} $(v_1, v_2)$ if
	\begin{align*}
    U = &\{(v_1,v_2)\} \cup \left\{(m,1) \middle| m \in \ZZ_{\geq 0}\right\} \cup \left\{(1,m) \middle| m \in \ZZ_{\geq 0}\right\} \\
    &\cup \left\{(a + 2m v_1, b + 2m v_2) \middle| a,b,m \geq 0, \ a,b \in 1 + 2\ZZ, \ m \in \ZZ, \ (a,b) \in W_{(v_1,v_2)}\right\}.
    \end{align*}
    
\noindent We say $U$ is of \emph{column-deleted type for} $(v_1, v_2)$ if
	\begin{align*}
    U = & \{(v_1, v_2)\} \cup \left\{(m,1) \middle| m \in \ZZ_{\geq 0}\right\} \cup \left\{(1,m) \middle| m \in \ZZ_{\geq 0}\right\} \\
    &\cup \left\{(2m + 1, 2n + 1) \middle| m,n \in \ZZ_{\geq 0}, \ \text{if } 2m + 1 = v_1 + 1 \text{ then } 2n + 1 < v_2\right\}.
    \end{align*}
    
\noindent We say $U$ is of \emph{column-deleted} $L$ \emph{type for} $(v_1, v_2)$ if
	\begin{align*}
    U = &\{(v_1, v_2)\} \cup \left\{(m,1) \middle| m \in \ZZ_{\geq 0}\right\} \cup \left\{(1,m) \middle| m \in \ZZ_{\geq 0}\right\} \\
    &\cup \left\{(a + (m + 1) v_2 + 2, b + 2m + 5) \middle| a,b,m \geq 0, \ a,b,m \in 2\ZZ, \ a < m \text{ or } b = 0\right\}.
    \end{align*}
    
\noindent We say that $U$ is of \emph{shifted column-deleted type for} $(v_1, v_2)$ if
	\begin{align*}
    U = & \{(v_1, v_2)\} \cup \left\{(m,1) \middle| m \in \ZZ_{\geq 0}\right\} \cup \left\{(1,m) \middle| m \in \ZZ_{\geq 0}\right\} \\
    &\cup \left\{(m, n) \middle| m,n \geq 0, \ m < v_1, \ m,n \in 1 + 2\ZZ\right\} \\
    &\cup \left\{(m, n) \middle| m,n \geq 0, \ m > v_1, \ m \in 2\ZZ, \ n \in 1 + 2\ZZ\right\}.
    \end{align*}
    
\noindent We say $U$ is of \emph{exceptional type} if
	\begin{align*}
    U = &\{(v_1, v_2)\} \cup \{(8,8)\} \cup \left\{(m,1) \middle| m \in \ZZ_{\geq 0}\right\} \cup \left\{(1,m) \middle| m \in \ZZ_{\geq 0}\right\} \\
    &\cup \left\{(4, 2m + 4) \middle| m \in \ZZ_{\geq 0}\right\} \cup \left\{(2m + 4,4) \middle| m \in \ZZ_{\geq 0}\right\}.
    \end{align*}
\end{definition}

This list enumerates all the possibilities for sets $U_\A(v_1, v_2)$ if $v_1, v_2 \neq 0$.

\begin{theorem}\label{Type Theorem}
Let $\U = U_\A(v_1, v_2)$ be a non-degenerate $(3,2)$-Ulam set such that $v_1, v_2 \neq 0$. Then exactly one of the following is true of either $\U$ or its reflection about the line $y = x$.
\begin{enumerate}
	\item $v_1, v_2 \in 2\ZZ \cap [4,\infty)$ and $\U$ is of $L$ type.
    \item $v_1 \in 2\ZZ$, $v_2 \in \left(1 + 2\ZZ\right) \cap [4,\infty)$, and $\U$ is of column-deleted type.
    \item $v_1 \in 2\ZZ \cap [4,\infty)$, $v_2 = 2$, and $\U$ is of column-deleted $L$ type.
    \item $v_1 \in 2\ZZ$, $v_2 = 3$, and $\U$ is of shifted column-deleted type.
    \item $v_1 = v_2 = 2$ and $\U$ is of exceptional type.
\end{enumerate}
\end{theorem}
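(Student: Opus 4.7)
The plan is to proceed by strong induction on $|(m, n)|$, reducing in each case to an analysis of the possible Ulam representations of candidate points. By Lemma \ref{Basic Structural Lemma}, I have $v_1, v_2 > 1$ with at least one even; since $\A$ is symmetric about $y = x$, I may assume $v_1$ is even. The theorem then splits into five cases by the value of $v_2$: (i) $v_2$ even and $\geq 4$, (ii) $v_2$ odd and $\geq 5$, (iii) $v_2 = 2$ with $v_1 \geq 4$, (iv) $v_2 = 3$, and (v) $v_1 = v_2 = 2$. Membership in $W_{(v_1, v_2)}$ is already pinned down by Lemma \ref{Basic Structural Lemma}, so the burden in each case is to identify $\U \cap L_{(v_1, v_2)}$.

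The general method is as follows. Every $(m, n) \in L_{(v_1, v_2)}$ possesses the standard representation $(m - 1, 1) + (1, n - 1)$, so to show $(m, n) \in \U$ I must rule out all other representations, and to show $(m, n) \notin \U$ I must exhibit a second representation. The central structural observation is a parity-based reduction: if $v_1 \geq 4$ and $(m, n) \in L_{(v_1, v_2)}$ has $n$ even with $m \geq 3$, then $(m, n) = (m - 3, 1) + (3, n - 1)$ is a second representation in which $(3, n - 1) \in \A$ is unaffected by the generator $(v_1, v_2)$ (because $3 - v_1 < 0$), hence $(m, n) \notin \U$; a symmetric statement holds for $m$ even when $v_2 \geq 4$. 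In cases (i) and (ii) this reduces $\U \cap L_{(v_1, v_2)}$ to the odd-odd points together with $(v_1, v_2)$ itself.

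For an odd-odd $(m, n) \in L_{(v_1, v_2)}$, $\A$-representations are unique, and any representation using two points of $\U \cap L_{(v_1, v_2)} \setminus \{(v_1, v_2)\}$ has even-even sum and is excluded by parity. Thus $(m, n) \in \U$ iff $(v_1, v_2) + (m - v_1, n - v_2)$ fails to be a valid representation, i.e., iff $(m - v_1, n - v_2) \notin \U$. Iterating this recursion until the argument falls into $W_{(v_1, v_2)}$ produces the prescribed structure: in case (i) both shifts preserve odd-odd parity, so the recursion strictly alternates in/out along translates by $(v_1, v_2)$ and yields the $L$ type; in case (ii) $(m - v_1, n - v_2)$ has one even coordinate and therefore lies in $\A$ precisely when $m = v_1 + 1$, killing only that single column and yielding the column-deleted type. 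Cases (iii) and (iv), where $v_2 \in \{2, 3\}$, are similar in spirit but require extra bookkeeping near the axes: the parity reduction now applies asymmetrically, which is what allows the extra row of surviving $(m, n)$ with $m$ even and $n$ odd $>v_1$ to appear in the column-deleted $L$ and shifted column-deleted descriptions.

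The main obstacle is case (v), where $v_1 = v_2 = 2$ causes the base parity reduction to fail entirely: the would-be second representation $(m - 3, 1) + (3, n - 1)$ now uses the point $(3, n - 1)$, which is itself killed by $(2, 2) + (1, n - 3)$, so one of its summands drops out of $\U$. The cascade of such disappearances leaves a delicate finite structure of surviving even-coordinate points on the lines $\{4\} \times \ZZ$, $\ZZ \times \{4\}$, together with the exceptional point $(8, 8)$, all of which must be enumerated by direct computation rather than by a clean recursion. Throughout, particular care is needed at the boundary between $W_{(v_1, v_2)}$ and $L_{(v_1, v_2)}$ to confirm that axis representations $(m, 1), (1, m)$ do not interact with $(v_1, v_2)$ to produce overlooked second representations; this verification, while tedious, reduces in each case to a finite parity and magnitude check.
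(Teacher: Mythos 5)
Your outline follows essentially the same route as the paper: use Lemma \ref{Basic Structural Lemma} to restrict $(v_1,v_2)$ and to pin down $\U\cap W_{(v_1,v_2)}$, kill the even-coordinate points with the nonstandard representations $(m-3,1)+(3,n-1)$ and its reflection (the paper uses exactly $(k+3,2l+2)=(3,2l+1)+(k,1)$), and then determine the surviving odd-odd points by the observation that their only possible nonstandard representation is $(v_1,v_2)+(m-v_1,n-v_2)$. The only real difference is organizational: you run a strong induction on $|(m,n)|$ with the alternating recursion $(m,n)\in\U\iff(m-v_1,n-v_2)\notin\U$, while the paper inducts on the windows $W_{(2kv_1,2kv_2)}$; these are interchangeable, and like the paper you carry out only the $L$-type case in detail and assert the rest are similar.

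One concrete caveat: your ``central structural observation'' requires $3<v_1$ so that $(3,n-1)$ lands in $W_{(v_1,v_2)}$ and is therefore guaranteed to lie in $\U$, but case (2) of the theorem allows $v_1=2$. There, for $n$ even and $n-1\geq v_2$, the point $(3,n-1)$ sits in the deleted column $x=v_1+1$ and is \emph{not} in $\U$, so the stated second representation is invalid and the claim that the reduction disposes of cases (1) and (2) uniformly does not hold as written. This is repairable --- e.g., use $(m-1,3)+(1,n-3)$ when $m$ is even, $(m-5,1)+(5,n-1)$ when $m$ is odd and $m\geq 5$, and $(v_1,v_2)+(1,n-v_2)$ for the leftover small-$m$ points --- but the fix is exactly the kind of case-by-case bookkeeping you defer to cases (3)--(5), so it should be acknowledged in case (2) as well.
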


\begin{proof}
By Lemma \ref{Basic Structural Lemma}, the given list enumerates all possibilities for $v_1, v_2$, after accounting for a possible reflection around the $y = x$ line. Furthermore, it is easy to check that $\U \cap W_{(v_1, v_2)}$ is of the specified type in each case---that is, it is equal to the intersection of a set $U$ of the desired type with $W_{(v_1, v_2)}$.

Consider the case $v_1, v_2 \in 2\ZZ \cap [4,\infty)$. We shall show that $\U \cap W_{(a, b)}$ is of type $L$ for all $a,b \geq 0$. Note that by Lemma \ref{Basic Structural Lemma},
\begin{align*}
\A \cap W_{(3, 3)} &= \left\{(m,1) \middle| m \in \ZZ_{\geq 0}\right\} \\
&\cup \left\{(1,m) \middle| m \in \ZZ_{\geq 0}\right\} \\
&\cup \left\{(3,2m + 1) \middle| m \in \ZZ_{\geq 0} \right\} \\
&\cup \left\{(2m + 1,3) \middle| m \in \ZZ_{\geq 0} \right\} \\ &= \U \cap W_{(3, 3)}.
\end{align*}

\noindent It follows that if $(m,n) \in \U$ and $m, n > 1$, then $m, n \in 1 + 2\ZZ$. This is evident if $(m,n) \in W_{(3, 3)}$---otherwise, either $(m,n) = (k + 3, 2l + 2)$ or $(2l + 2, k + 3)$ for some $k, l \in \ZZ_{\geq 0}$, and we have nonstandard representations
	\begin{align*}
   (k + 3, 2l + 2) &= (3, 2l + 1) + (k, 1) \\
   (2l + 2, k + 3) &= (2l + 1, 3) + (1,k).
    \end{align*}
    
\noindent Furthermore, it must be that $\U \cap W_{(2v_1, 2v_2)}$ is of $L$ type. To see this, it suffices to show that
	\begin{align*}
    \U \cap W_{(2v_1, 2v_2)} \cap L_{(v_1, v_2)} = \{(v_1, v_2)\},
    \end{align*}
    
\noindent but as we know any point in this intersection must necessarily be of the form $(2m + 1, 2n + 1)$, we have a nonstandard representation 
	\begin{align*}
	(2m + 1, 2n + 1) &= (v_1, v_2) + (2m + 1 - v_1, 2n + 1 - v_2).
	\end{align*}
    
\noindent We now prove that $\U \cap W_{(2kv_1, 2kv_2)}$ is of $L$ type by inducting on $k \in \ZZ$---we have proved the base case $k = 1$, so it suffices to assume $\U \cap W_{(2mv_1, 2mv_2)}$ is $L$ type for some $m \in \ZZ_{\geq 0}$ and prove that $\U \cap W_{(2(m + 1)v_1, 2(m + 1)v_2)}$ is $L$ type. This amounts to proving that
	\begin{align*}
    \U \cap &W_{\left((2m + 1)v_1, (2m + 1)v_2\right)} \cap L_{(2mv_1, 2mv_2)} = \\ &W_{\left((2m + 1)v_1, (2m + 1)v_2\right)} \cap L_{(2mv_1, 2mv_2)} \cap \left(1 + 2\ZZ_{\geq 0}\right)^2 \\
    \U \cap &W_{\left((2m + 2)v_1, (2m + 2)v_2\right)} \cap L_{\left((2m + 1)v_1, (2m + 1)v_2\right)} = \emptyset.
    \end{align*}
    
\noindent This is easily proven by noting that the former set cannot possibly have any nonstandard representations, whereas the latter set is nothing more than
	\begin{align*}
    (v_1, v_2) + \U \cap W_{\left((2m + 1)v_1, (2m + 1)v_2\right)} \cap L_{(2mv_1, 2mv_2)}.
    \end{align*}

\noindent The other cases are similar.
\end{proof}

\section{Parity Restrictions on $(k,2)$-Ulam sets:}\label{Parity Section}

We close by giving a restriction on the possible structure of $(k,2)$-Ulam sets for $k \geq 2$. As in the previous section, we consider non-degenerate Ulam sets containing $(1,0), (0,1)$, and so we define
	\begin{align*}
    U_\A(v_1, v_2, \ldots v_n) = U\left((1,0),(0,1),v_1, \ldots v_n\right).
    \end{align*}
    
\noindent We shall show that the parity of any element in $U_\A(v_1, v_2, \ldots v_n)$ is eventually fixed, as long as none of the $v_i$ lie on the coordinate axes.

\begin{theorem}\label{Parity Theorem}
Let $\U = U_\A(v_1, v_2, \ldots v_n)$ be a non-degenerate $(n + 2, 2)$-Ulam set such that none of the $v_i$ lie on the coordinate axes. Then there exists a $v$ such that for all $u \in \U \cap L_v$, $u = v \mod 2$.
\end{theorem}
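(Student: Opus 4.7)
The plan is to prove the parity restriction by combining a structural lemma in the ``corner'' region with a careful analysis of representations forced by the presence of the basis vectors $(1,0)$ and $(0,1)$. The first step is to extend Lemma \ref{Basic Structural Lemma} to the $(n+2, 2)$ setting. Let $v^\dagger = (\min_i (v_i)_1, \min_i (v_i)_2)$. Since none of the $v_i$ lie on the axes and $\U$ is non-degenerate, we have $v^\dagger_1, v^\dagger_2 \geq 2$, and no $v_i$ lies in $W_{v^\dagger}$. An induction on norm, identical to the one in the $n = 1$ case, then yields $\U \cap W_{v^\dagger} = \A \cap W_{v^\dagger}$. In particular, the axis rows $\{(m, 1)\}$ and $\{(1, m)\}$ lie in $\U$, and the only low-bulk Ulam points of $W_{v^\dagger}$ are the odd-odd lattice points.

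Next I would pick $v$ componentwise larger than every $v_i$ and analyze $\U \cap L_v$. For $(a,b) \in L_v$ with $a, b \geq 2$, the \emph{standard representation} $(a,b) = (a-1,1) + (1, b-1)$ always exists since both summands are axis points in $\U$. Hence $(a, b) \in \U$ if and only if the standard representation is unique. From this I deduce the key sum-free property: for any two distinct $(a,b), (c,d) \in \U$ with coordinates $\geq 2$, the sum $(a+c, b+d)$ admits both $(a,b) + (c,d)$ and the standard representation as distinct representations, so $(a+c, b+d) \notin \U$.

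Given these tools, I would classify the possible alternative representations of $(a,b) \in L_v$ by the ``type'' of summands involved: (axis + axis), (axis + low-bulk), (low-bulk + low-bulk), (axis + $L_v$-bulk), (low-bulk + $L_v$-bulk), ($L_v$-bulk + $L_v$-bulk), or involving the $v_i$ directly. The parity of $(a,b)$ selects which of these types can actually occur: a (low-bulk + low-bulk) representation forces $(a,b)$ to have parity $(0,0)$, an (axis + low-bulk) summand forces the parity of one coordinate of $(a,b)$ to be even in a prescribed way, and so on. Combining these parity-indexed constraints with the sum-free property forces $\U \cap L_v$ into a single parity class once $v$ is sufficiently large.

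The hard part will be the final case analysis, because alternative representations of a candidate Ulam point $(a,b) \in L_v$ can themselves involve points of $L_v$ whose Ulam status is precisely what we are trying to determine. I would handle this chicken-and-egg issue by a well-founded induction on norm within $L_v$: the minimal-norm Ulam point $u_0 \in L_v$ fixes the candidate parity class, and the inductive hypothesis then allows the case analysis to go through for each subsequent Ulam point, since all lower-norm $L_v$-bulk summands are known to share the parity of $u_0$. A cleaner alternative, paralleling Theorem \ref{RigidityTheorem}, is a nonstandard-analytic argument: pass to hypernatural coordinates where the parity partition of $\U \cap L_v$ is a first-order property, observe that the ``rigidity'' forces a single class, and transfer the conclusion back to the standard integers.
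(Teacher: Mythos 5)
Your preliminaries are fine --- the extension of Lemma \ref{Basic Structural Lemma} to the $(n+2,2)$ setting, the observation that every $(a,b)$ with $a,b\geq 2$ has the standard representation $(a-1,1)+(1,b-1)$ and hence lies in $\U$ iff it has no nonstandard one, and even the sum-free property are all correct. But the core of the argument is missing: you never exhibit a mechanism that actually produces a nonstandard representation for a hypothetical wrong-parity point of $\U\cap L_v$. Your "parity-indexed constraints" only tell you what parities a point \emph{can} have given that a representation of a certain type exists; they do not show that two Ulam points of $L_v$ with different parities are incompatible. In your proposed induction on norm, when you reach a candidate $(a,b)\in L_v$ whose parity disagrees with that of the minimal point $u_0=(a_0,b_0)$, the only candidate second representation is $(a,b)=u_0+(a-a_0,b-b_0)$, and there is no reason for $(a-a_0,b-b_0)$ to lie in $\U$; a single point $u_0$, or any finite set of them, cannot "cover" all wrong-parity points. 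The appeal to a transfer-principle argument does not rescue this: Theorem \ref{RigidityTheorem} concerns the family $U(a,N)$ as $N$ varies, and there is no analogous rigidity statement available for a fixed $(k,2)$-Ulam set.

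The paper's proof supplies exactly the missing ingredient via a dichotomy. Either some column $x=m$ ($m>1$) contains infinitely many points of $\U$, or no column does; in the latter case one bounds the height of the bulk by a function $\phi(m)$ and observes that for $m>2$, $n>2\phi(m)$ the point $(m,n)$ admits \emph{only} the standard representation, hence lies in $\U$ --- contradicting the finiteness of that column. Given an infinite column, Lemma \ref{Infinite to Parity} extracts a point $(u_1,u_2)$ with $(u_1,u_2+2k)\in\U$ for all $k\geq 0$, and it is this \emph{infinite arithmetic progression} that furnishes, for every $(u_1',u_2')\in L_{(u_1,u_2)}$ with $u_2'\not\equiv u_2 \bmod 2$, the nonstandard representation
\begin{align*}
(u_1',u_2') = (u_1, u_2'-1) + (u_1'-u_1, 1),
\end{align*}
since $(u_1,u_2'-1)$ lies on the progression. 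Applying the same argument to the reflection about $y=x$ pins down the other coordinate's parity. Without this progression (or some equally uniform family of Ulam points), your case analysis has no way to close.
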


To prove Theorem \ref{Parity Theorem}, we first note that if $\U$ contains a point $(u_1, u_2)$ such that $(u_1, u_2 + 2k) \in \U$ for all $k \in \ZZ_{\geq 0}$, then for all $(u_1', u_2') \in \U \cap L_{(u_1, u_2)}$, $u_2 = u_2' \mod 2$. This is because if $u_2' \neq u_2 \mod 2$,
	\begin{align*}
    (u_1', u_2') = (u_1, u_2' - 1) + (u_1' - u_1, 1)
    \end{align*}

\noindent gives a nonstandard representation. It shall therefore suffice to prove the existence of such a point. Toward this end, we give a useful lemma.

\begin{lemma}\label{Infinite to Parity}
Let $\U = U_\A(v_1, v_2, \ldots v_n)$ be a non-degenerate $(n + 2, 2)$-Ulam set such that none of the $v_i$ lie on the coordinate axes. If there exists $m \in \ZZ_{> 1}$ such that there are infinitely many points of the form $(m,n) \in \U$, then there exists a point $(u_1, u_2)$ such that $(u_1, u_2 + 2k) \in \U$ for all $k \in \ZZ_{\geq 0}$.
\end{lemma}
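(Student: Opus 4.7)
The plan is to combine pigeonhole on the finite set of first-coordinate ``splits'' $m = a + a'$ of a representation with a minimality argument on $m$, and then use induction on the height to extract the desired infinite arithmetic progression. To begin, I would choose $m$ to be the smallest integer greater than $1$ such that the column $\{n : (m, n) \in \U\}$ is infinite; this exists by hypothesis. Enumerate its heights as $n_1 < n_2 < \ldots$ and observe that each $(m, n_i)$ has a unique representation $p_i + q_i$ with $p_i, q_i \in \U$ distinct. Since the first coordinates of $p_i, q_i$ lie in the finite set $\{(a, m - a) : 0 \leq a \leq m\}$, passing to an infinite subsequence via pigeonhole I may assume the split $(a, a')$ with $a \leq a'$ is constant.

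I would then split into cases. If $a = 0$, the only element of $\U$ on the $y$-axis is $(0, 1)$, so the representation must be $(0, 1) + (m, n_i - 1)$, hence $(m, n_i - 1) \in \U$ for each $i$. Iterating the pigeonhole argument on the doubled infinite set of heights, minimality of $m$ forbids any subsequent split $(a_1, m - a_1)$ with $2 \leq a_1 \leq m - 2$, since such a split would infinitely populate a column strictly between $1$ and $m$. So the process either continues indefinitely with split $(0, m)$, producing arbitrarily long descending runs of heights in column $m$ and ultimately a cofinite subset of $\ZZ_{\geq 0}$, or else drops to the second case. If $a \geq 1$, then minimality forces $a = 1$ (or $m = 2$, handled similarly); a further pigeonhole on the finitely many heights of column $m - 1$ appearing in the representations fixes a single summand $(m - 1, h_0) \in \U$, giving the rigid form $(m, n_i) = (1, n_i - h_0) + (m - 1, h_0)$ for all $i$ in an infinite subsequence. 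Here I use the auxiliary fact that $(1, k) \in \U$ for every $k \geq 0$, which follows by an easy induction from the observation that the only representation of $(1, k)$ as a sum of two distinct elements of $\U$ is $(0, 1) + (1, k - 1)$.

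From this rigid form I would induct on $n$ to show that $(m, n) \in \U$ precisely when $n$ has a fixed parity for $n$ above some threshold. The inductive step compares the known representation $(1, n - h_0) + (m - 1, h_0)$ with the only other competing one, $(0, 1) + (m, n - 1)$: the latter is a valid second representation exactly when $n$ has the opposite parity, by the inductive hypothesis, forcing $(m, n) \notin \U$; otherwise, the rigid representation is unique and $(m, n) \in \U$. This delivers the required arithmetic progression $(m, u_2 + 2k) \in \U$ for all $k \in \ZZ_{\geq 0}$, with $u_2$ chosen as the smallest height of the appropriate parity. The main obstacle will be twofold: verifying that in Case A the iteration can only stabilize on the split $(0, m)$ and that the resulting descending runs truly grow without bound, and separately, ruling out competing representations of $(m, n)$ coming from still-other columns in the Case B induction step. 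Both should yield to careful use of minimality of $m$, since any persistent competing representation would seed infinitely many elements in a strictly smaller column, contradicting the choice of $m$.
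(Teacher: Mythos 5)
Your overall strategy (take $M$ minimal among columns with infinitely many points, then force an alternation $(M,n)\in\U \iff (M,n-1)\notin\U$ for large $n$ and read off the parity) is the right one and matches the paper's. But the machinery you build around it does not do the work you want. The pigeonhole on the splits of the \emph{unique} representations of the points $(M,n_i)\in\U$ is vacuous: since the full row $\{(k,1)\}$ and column $\{(1,k)\}$ lie in $\U$, every $(M,n)$ with $M>2$ already admits the standard representation $(M-1,1)+(1,n-1)$, so the unique representation of any point that \emph{is} in $\U$ must be exactly that one, with split $(1,M-1)$ and fixed second summand $(M-1,1)$. Your Case A ($a=0$) therefore never occurs, and its advertised outcome (a cofinite column) is in fact impossible, since two consecutive points $(M,n),(M,n+1)\in\U$ would hand $(M,n+1)$ the second representation $(0,1)+(M,n)$. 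All of the content of the lemma lies not in the unique representations of points in $\U$ but in deciding when $(M,n)$ acquires a \emph{second} representation.

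That is where the genuine gap sits. The dangerous competitor is $(M,n)=(1,n-h)+(M-1,h)$ with $(M-1,h)\in\U$ and $h\neq 1$. Your proposed mechanism --- that a persistent competitor would ``seed infinitely many elements in a strictly smaller column'' --- does not apply to it: this representation reuses a \emph{single fixed} point of the finite column $M-1$ together with the already-infinite column $1$, which minimality of $M$ permits, so no contradiction results. (Your mechanism does dispose of competitors whose summands both lie in columns $2,\dots,M-2$, but that is the easy case, handled in the paper by noting their heights are bounded by some $N$ and taking $n>2N$.) The argument that actually closes the case, and is the crux of the paper's proof, runs the other way: if such an $h\neq 1$ existed, then \emph{every} $(M,n)$ with $n\ge h$ would have the nonstandard representation $(1,n-h)+(M-1,h)$ and so lie outside $\U$, emptying column $M$ above height $h$ and contradicting the infiniteness of column $M$ itself --- not of a smaller column. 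Once that is in place, $(0,1)+(M,n-1)$ is the only possible second representation for $n>2N$, and your concluding parity induction goes through essentially as in the paper.
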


\begin{proof}
Let $M \in \ZZ_{> 1}$ be the smallest $m$ such that there are infinitely many points of the form $(m,n) \in \U$. Note that in fact $M > 2$, since every element $(2,n)$ has at least two representations. Therefore, we can define $N$ be the largest $n$ such that $(m,n) \in \U$ where $1 < m < M$.

Consider any point $(M,n) \in \ZZ_{\geq 0}^2$ with $n > 2N$. For any representation of $(M,n)$, at least one of the summands must have $x$-coordinate $1$ or $M$---otherwise, the $y$-coordinates are too small to add up to $n$. If this representation is
	\begin{align*}
    (M,n) = (1,n') + (M - 1, n - n'),
    \end{align*}
    
\noindent then it is nonstandard if and only if $n - n' \neq 1$. However, if $n - n' \neq 1$, then every point $(M,n'')$ with $n'' > n$ has a nonstandard representation, which is impossible.

On the other hand, the only other possible representation is $(M,n) = (M,n - 1) + (0,1)$, so we conclude that $(M,n) \in \U$ if and only if $(M,n - 1) \notin \U$. We conclude that if we take
	\begin{align*}
    (u_1, u_2) = \begin{cases} (M,n) & \text{if } (M,n) \in \U \\ (M, n + 1) & \text{otherwise} \end{cases},
    \end{align*}
    
\noindent it satisfies the desired conditions.
\end{proof}

This is sufficient to prove Theorem \ref{Parity Theorem}.

\begin{proof}[Proof of Theorem \ref{Parity Theorem}]
We claim that there must exist some $m \in \ZZ_{> 1}$ such that there are infinitely many points of the form $(m,n) \in \U$. Suppose otherwise---then there must exist some strictly increasing function $\phi: \ZZ_{> 1} \rightarrow \ZZ_{> 1}$ such that if $(m,n) \in \U$ and $m,n > 1$, then $n < \phi(m)$.

Let $m > 2$ and $n > 2 \phi(m)$. Then if
	\begin{align*}
    (m,n) = (m_1, n_1) + (m_2, n_2)
    \end{align*}

\noindent is a representation of $(m,n)$, then it must be the standard representation---otherwise, $n_1 + n_2 < 2\phi(m) < n$. But this implies $(m,n) \in \U$, which is a contradiction.

Consequently, we can apply Lemma \ref{Infinite to Parity}. By our earlier remarks, we know there exists a point $(u_1, u_2) \in \U$ such that for all $(u_1',u_2') \in \U \cap W_{(u_1, u_2)}$, $u_2' \equiv u_2 \mod 2$.

On the other hand, the reflection of $\U$ about the $y = x$ is also an Ulam set, which we shall denote by $\mathcal{V}$. It is easy to check that $\mathcal{V}$ also satisfies the requirements of the theorem, and therefore must contain a point $(v_1, v_2)$ such that for all $(v_1', v_2') \in \mathcal{V} \cap W_{(v_1, v_2)}$, $v_2' \equiv v_2 \mod 2$. However, this means that if we take
	\begin{align*}
    v = \left(\max\{u_1, v_2\}, \max\{u_2, v_1\}\right),
    \end{align*}
    
\noindent then for all $u \in \U \cap L_v$, $u = v \mod 2$, as desired.
\end{proof}

\bibliography{UlamLibrary}
\bibliographystyle{alpha}
\end{document}